\newtheorem{theorem}{Theorem}[section]
\newtheorem{thm}[theorem]{Theorem}
\newtheorem{prop}[theorem]{Proposition}
\newtheorem{lem}[theorem]{Lemma}
\newtheorem{rem}[theorem]{Remark}
\newtheorem{cor}[theorem]{Corollary}
\makeatletter \@addtoreset{equation}{section}
\newcommand{\red}[1]{{\color{red}  #1}}
\DeclareMathOperator*{\CT}{CT}
\def\CTA{\Xi}
\def\A{\mathcal{A}}
\def\x{\boldsymbol{x}}
\def\y{\boldsymbol{y}}
\def\id{\textrm{id}}
\def\inv{\mathrm{inv}}
\author{Guoce Xin$^{1}$, Chen Zhang$^{2}$, Yue Zhou$^{3,*}$ and Yueming Zhong$^{4}$}
\address{ $^{1, 2}$School of Mathematical Sciences,  Capital Normal University,
 Beijing 100048,  PR China,
 $^{3,4}$School of Mathematics and Statistics, HNP-LAMA, Central South University, Changsha
410083, P.R. China}
\email{$^1$\texttt{guoce\_xin@163.com}\\ $^2$\texttt{ch\_enz@163.com}\\ $^3$\texttt{zhouyue@csu.edu.cn}\\ \vskip 0.1cm $^4$\texttt{zhongyueming107@gmail.com}}
\thanks{$*$ Corresponding author.}
\begin{document}

\title[Constant term algebra]{The constant term algebra of type $A$: the structure}
\date{\today}

\begin{abstract}
In this paper, we discover a new noncommutative algebra. We refer this algebra as the constant term algebra of type $A$, which is generated by certain constant term operators.
We characterize a structural result of this algebra by establishing an explicit basis in terms of certain forests. This algebra arises when we apply the method of the iterated Laurent series to investigate Beck and Pixton's residue computation for the Ehrhart series of the Birkhoff polytope. This algebra seems to be the first structural result in the area of the constant term world since the discovery of the Dyson constant term identity in 1962.
\end{abstract}
\maketitle

\noindent
\begin{small}
 \emph{Mathematics Subject Classification}: Primary 08A05; Secondary 47C05, 05A19.
\end{small}

\noindent
\begin{small}
\emph{Keywords}: constant term identities; ordered increasing forests; constant term algebra.
\end{small}

\section{Introduction}\label{s-introduction}

In the introduction,  we briefly review the development of the area of constant term identities.
Then we introduce our main contribution in this paper: the construction of the constant term algebra of type $A$.

The study of constant term identities originates from the Dyson constant term identity \cite{dyson1962} in 1962.
For nonnegative integers $a_1,\dots,a_n$,
\begin{equation}\label{thm-Dyson}
\CT_{\x} \,\prod_{1\leq i\neq j\leq n}(1-x_i/x_j)^{a_i}=\binom{a_1+\cdots +a_n}{a_1,\dots, a_n},
\end{equation}
where $\displaystyle\CT_{\x}f$ denotes taking the constant term of the Laurent polynomial (series) $f$ with respect to $\x:=(x_1,\dots,x_n)$,
and
\[
\binom{a_1+\cdots +a_n}{a_1,\dots, a_n}=\frac{(a_1+\cdots+a_n)!}{a_1!\cdots a_n!}
\]
is the multinomial coefficient.
Dyson's conjecture was soon proved by Gunson \cite{gunson} and Wilson \cite{wilson}.
Subsequently, using Lagrange interpolation Good \cite{good} discovered an elegant one-page proof, and much later, Zeilberger obtained a combinatorial proof using tournaments \cite{zeil}.

In 1975, Andrews \cite{andrews1975} conjectured a $q$-analogue of the Dyson constant term identity.
\begin{equation}\label{q-Dyson}
\CT_{\x} \prod_{1\leq i<j\leq n}
(x_i/x_j;q)_{a_i}(qx_j/x_i;q)_{a_j}=
\frac{(q;q)_{a_1+\cdots+a_n}}{(q;q)_{a_1}(q;q)_{a_2}\cdots(q;q)_{a_n}},
\end{equation}
where
$(z;q)_k:=(1-z)(1-zq)\cdots(1-zq^{k-1})$ is a
$q$-shifted factorial for a positive integer $k$ and $(z;q)_0:=1$.
Up till now, there exist four essentially different proofs of the $q$-Dyson identity \cite{zeil-bres1985,gess-xin2006,KN,cai}.

The equal parameter case of the $q$-Dyson identity leads to two directions.
One is the famous Macdonald constant term conjecture \cite{cherednik,macdonald82} and the theory of Macdonald symmetric function \cite{MacSMC,Mac95}. Another direction is the $q$-Morris identity and its equivalent $q$-Selberg integral \cite{Askey,Morris1982,Kadell1988,Habsieger1988}. Since the discovery of the Selberg integral \cite{Selberg}, many generalizations were found, such as \cite{AFLT11,ARW,BF,Kadell93,Kadell97,War09}. Forrester and Warnaar \cite{FW} gave a detailed review of the importance of the Selberg integral.

In the constant term world, many identities behave as bridges which connect different branches of mathematics, such as the Morris identity; or generate new theory, such as the Macdonald constant term conjecture; or just remain as items of the list. As far as we know, there exists no structural result yet. In this paper, we investigate a collection of certain constant term operators, and find that they form an algebra.

This work is inspired by the study of the Ehrhart polynomial $H_n(t)$ of the Birkhoff polytope \cite{BeckPixton,Louis}. The problem is equivalent to counting the number of $n\times n$ nonnegative integer matrices such that every row sum and every column sum equal to $t$.
The polynomial $H_n(t)$ has been studied by many researchers, e.g. \cite{BeckPixton,Clara,Diaconis,John,Per,Loera,Welleda}. The record is kept by Beck and Pixton \cite{BeckPixton}, who obtained an explicit expression for $H_9(t)$ using residue computation. When studying Beck and Pixton's residue method using iterated Laurent series, we discovered an algebra of constant term operators. We introduce this algebra below.

It is known that \cite{BeckPixton}
\[
H_n(t) = \CT_{\x,\y} \frac{\prod_{i=1}^n x_i^{-t} y_i^{-t} }{\prod_{1\le i,j\le n} (1-x_i y_j)}.
\]
By eliminating the $\y$ variables using partial fraction decompositions (PFD as abbreviation), we obtain
\begin{equation}\label{e-intro-1}
 H_n(t)=\CT_{\x} \prod_{i=1}^n x_i^{-t} \left(\sum_{i=1}^n \frac{x_i^t}{\prod_{j=1,j\neq i}^n (1-x_j/x_i)} \right)^n.
\end{equation}
Expanding the sum in \eqref{e-intro-1} by the multinomial theorem gives
\begin{equation}\label{e-intro-2}
H_n(t)= \sum \binom{n}{m_1,m_2,\dots, m_n} \CT_{\x} \prod_{i=1}^{n}\frac{ x_i^{(m_i-1)t}}{\prod_{j=1,j\neq i}^n (1-x_j/x_i)^{m_i}},
\end{equation}
where the sum is over all the nonnegative integers $m_i$ such that $\sum_{i=1}^nm_i=n$ for a fixed positive integer $n$.
Here and what follows, the rational functions in $\x$ are explained as their Laurent series expansions according to $1> x_1>\cdots>x_n>0$.

That writing $H_n(t)$ as in \eqref{e-intro-2} suggests us to consider the constant term of the next kind of rational functions of type $A$
\begin{equation}\label{e-intro-F}
F=\frac{L(\x)}{\prod_{1\leq i<j\leq n}(1-x_j/x_i)^{q_{ij}}},
\end{equation}
where $L(\x)$ is a homogeneous Laurent polynomial of degree 0 with coefficients in a field $\mathbb{F}$, the $q_{ij}$ are nonnegative integers,  and $F$ is written in a reduced form. Denote by $\mathcal{A}_n=\mathcal{A}_n(\x)$ the set of all such rational functions. We focus on the case when $L(\x)$ is a monomial. The general case
follows by linearity.

To describe our algebra, we introduce its basic building blocks: the constant term operator $\CT\limits_{x_i=x_j}$ and its slight extension $\CT\limits_{x_i=x_j^*}$.
For $u_1,\dots,u_m\in \{1,2,\dots,n\}$ such that $u_r\neq i$,
$c_r\in \mathbb{C}$ and $q_r$ positive integers, let
\begin{equation}\label{e-intro-3}
  f(x_i)=\frac{L_0(x_i)}{\prod_{r=1}^m(1-c_rx_i/x_{u_r})^{q_r}},
\end{equation}
where $L_0(x_i)$ is a Laurent polynomial in $x_i$, and $c_r\neq c_s$ if $x_{u_r}=x_{u_s}$.
Assume the PFD of $f$ with respect to $x_i$ is given by
\begin{equation}\label{e-intro-4}
f(x_i)=L_1(x_i)+\sum_{r=1}^m\frac{A_r(x_i)}{(1-c_rx_i/x_{u_r})^{q_r}},
\end{equation}
where $L_1(x_i)$ is a Laurent polynomial, and $A_r(x_i)$ is a polynomial of degree less than $q_r$ for each $r$.
Define
\begin{equation}\label{e-intro-CTij}
\CT_{x_i=c_r^{-1}x_{u_r}} f(x_i)=A_r(0) \quad \text{and}  \quad \CT_{x_i=x_j^*}f(x_i)= \sum_{u_r=j}A_r(0).
\end{equation}
In fact, we can write
\[
\CT_{x_i=x_j^*}f(x_i)= \sum_{u_r=j}\CT_{x_i=c_r^{-1}x_{u_r}} f(x_i)
\]
by the above definition.
In particular, if there is only one $r$ such that $u_r=j$, then
$\CT\limits_{x_i=x_j^*}f=\CT\limits_{x_i=c_r^{-1}x_j}f$.
Furthermore, if $c_r=1$ then $\CT\limits_{x_i=x_j^*}f=\CT\limits_{x_i=x_j}f$.
We define the general operator $\CT\limits_{x_i=x_j^*}$ in \eqref{e-intro-CTij} for simplicity of some proofs in this paper and further development of this theory in the future.
We remark that when acting on $\A_n$, we only need the operator $\CT\limits_{x_i=x_j}$.

In this paper, we observe that such constant terms $\CT\limits_{x_i=x_j}$ as operators on $\A_n$ generate an algebra. We call it the constant term algebra of type $A$ and denote by $\CTA$. This algebra characterizes the structure of certain constant term operators.
In this paper, we also discover a basis of $\CTA$, as stated in Theorem~\ref{thm-struct} below.
In the light of $\CTA$, we develop a new method to compute constant terms of type $A$ rational functions.
In the next sequel, we will use $\CTA$ to simplify the computation of $H_n(t)$. It is hopeful to obtain an explicit formula for $H_{10}(t)$.

The structure of this paper is as follows.
In Section 2, we introduce notation and basic facts about $\CTA$.
In Section 3, we establish the structure of $\CTA$ by giving an explicit basis.

\section{Notation and basic facts}\label{s-notation}

In this section, we give notations used throughout this paper and introduce several basic facts
of $\CTA$.

We obtain an explicit expression for the PFD of the rational function $f(x_i)$
in \eqref{e-intro-3} using differential operators.
\begin{lem}\label{l-A10}
Let $f(x_i)$ and $A_r(x_i)$ be as in \eqref{e-intro-3} and \eqref{e-intro-4} respectively. For $r\in \{1,2,\dots,m\}$, denote
\begin{equation}\label{e-defin-g}
g_r(x_i)=f(x_i)(1-c_rx_i/x_{u_r})^{q_r}=\frac{L_0(x_i)}{\prod_{l=1,l\neq r}^m(1-c_lx_i/x_{u_l})^{q_l}}.
\end{equation}
Then
\begin{equation}\label{e-u-A1}
A_r(x_i)=\sum_{k=0}^{q_r-1} \frac{(-1)^k\big(D_w^k g_r(wc_r^{-1}x_{u_r})\big)\big|_{w=1}}{k!}\cdot(1-c_rx_i/x_{u_r})^{k},
\end{equation}
where $D_w$ is the differential operator with respect to $w$.
In particular,
\begin{equation}\label{e-u-A10}
\CT_{x_i=c_r^{-1}x_{u_r}}f(x_i)= A_r(0)= \frac{(-1)^{q_r-1}}{(q_r-1)!}\Big(D_w^{q_r-1}\frac{g_r(wc_r^{-1}x_{u_r})}{w}\Big)\Big|_{w=1}.
\end{equation}
Furthermore, if $q_r=1$ then
\begin{equation}\label{e-struc-1}
A_r(0)=g_r(c_r^{-1}x_{u_r}).
\end{equation}
\end{lem}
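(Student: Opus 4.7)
The plan is to extract $A_r(x_i)$ from $f(x_i)$ by a Taylor expansion of the regularized function $g_r(x_i)=f(x_i)(1-c_rx_i/x_{u_r})^{q_r}$ around the point $x_i=c_r^{-1}x_{u_r}$ where the order-$q_r$ pole of $f$ has just been cleared. First I would parametrize $x_i = w c_r^{-1}x_{u_r}$, so that $1-c_rx_i/x_{u_r}=1-w$, and set $h(w):=g_r(wc_r^{-1}x_{u_r})$. Since $g_r$ is regular at $x_i=c_r^{-1}x_{u_r}$ (equivalently, $h$ is regular at $w=1$), the Taylor expansion
\[
h(w)=\sum_{k\geq 0}\frac{h^{(k)}(1)}{k!}(w-1)^k = \sum_{k\geq 0}\frac{(-1)^k h^{(k)}(1)}{k!}(1-c_rx_i/x_{u_r})^k
\]
makes sense as a formal power series in $(1-c_rx_i/x_{u_r})$.

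Next I would multiply the PFD \eqref{e-intro-4} through by $(1-c_rx_i/x_{u_r})^{q_r}$ to obtain
\[
g_r(x_i)=A_r(x_i)+(1-c_rx_i/x_{u_r})^{q_r}\Bigl(L_1(x_i)+\sum_{s\neq r}\tfrac{A_s(x_i)}{(1-c_sx_i/x_{u_s})^{q_s}}\Bigr).
\]
All terms other than $A_r(x_i)$ are divisible by $(1-c_rx_i/x_{u_r})^{q_r}$, and $A_r(x_i)$ has degree less than $q_r$, so it has degree less than $q_r$ in $(1-c_rx_i/x_{u_r})$ as well. Hence $A_r(x_i)$ is exactly the degree $\le q_r-1$ truncation in $(1-c_rx_i/x_{u_r})$ of the Taylor expansion above, which is \eqref{e-u-A1}.

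For \eqref{e-u-A10}, I would specialize $x_i=0$ in \eqref{e-u-A1}, turning every factor $(1-c_rx_i/x_{u_r})^k$ into $1$, and then repackage the resulting partial sum $\sum_{k=0}^{q_r-1}\frac{(-1)^k h^{(k)}(1)}{k!}$ as a single derivative via the identity $\frac{1}{w}=\sum_{j\ge 0}(1-w)^j$ around $w=1$. Multiplying the Taylor series of $h(w)$ by this geometric series gives
\[
\frac{h(w)}{w}=\sum_{n\ge 0}\Bigl(\sum_{k=0}^{n}\frac{(-1)^k h^{(k)}(1)}{k!}\Bigr)(1-w)^n,
\]
so the coefficient of $(1-w)^{q_r-1}$ equals $A_r(0)$, and Taylor's formula applied to $h(w)/w$ at $w=1$ produces the right-hand side of \eqref{e-u-A10}. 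Finally, \eqref{e-struc-1} is the instant special case $q_r=1$: then $A_r(x_i)$ is a constant, so $A_r(0)=h(1)=g_r(c_r^{-1}x_{u_r})$.

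I do not anticipate a real obstacle; the whole argument is essentially standard Taylor/PFD bookkeeping. The only part that needs care is keeping track of the sign and chain-rule factor from the change of variables $x_i\leftrightarrow w$ — in particular verifying that differentiating with respect to $w$ (as written in the statement) rather than $x_i$ is the correct convention, which is exactly what the substitution $1-w=1-c_rx_i/x_{u_r}$ guarantees.
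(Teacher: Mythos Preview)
Your argument is correct, but it takes a different route from the paper's. You Taylor-expand $g_r$ about $x_i=c_r^{-1}x_{u_r}$ and read off $A_r(x_i)$ as the degree-$(q_r-1)$ truncation, then recover \eqref{e-u-A10} by the geometric-series identity $1/w=\sum_{j\ge0}(1-w)^j$. The paper instead introduces the slack variable $w$ through
\[
\frac{1}{(1-c_rx_i/x_{u_r})^{q_r}}=\frac{(-1)^{q_r-1}}{(q_r-1)!}\,D_w^{q_r-1}\frac{1}{w-c_rx_i/x_{u_r}}\Big|_{w=1},
\]
takes the PFD of the resulting \emph{simple-pole} rational function, and then differentiates and sets $w=1$; uniqueness of PFD gives
\[
\frac{A_r(x_i)}{(1-c_rx_i/x_{u_r})^{q_r}}=\frac{(-1)^{q_r-1}}{(q_r-1)!}\,D_w^{q_r-1}\frac{g_r(wc_r^{-1}x_{u_r})}{w-c_rx_i/x_{u_r}}\Big|_{w=1},
\]
from which \eqref{e-u-A10} follows immediately by setting $x_i=0$ and \eqref{e-u-A1} by the Leibniz rule. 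Your approach is more elementary and arguably cleaner for \eqref{e-u-A1}; the paper's slack-variable device has the advantage that \eqref{e-u-A10} drops out without your repackaging step, and---more to the point---it is exactly the technique reused throughout Section~\ref{s-structure} to prove the Commutativity Rule, the Exchange Rule, and the V-Formula.
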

\begin{proof}
We only prove the $r=1$ case. The other cases can be obtained by symmetry.

Observe that
\[
\frac{(-1)^{q_1-1}}{(q_1-1)!} D_w^{q_1-1}\Big(\frac{1}{w-c_1x_i/x_{u_1}}\Big)= \frac{1}{(w-c_1x_i/x_{u_1})^{q_1}}.
\]
Using this we can write
\begin{equation}\label{e-struc-2}
f(x_i)=\frac{(-1)^{q_1-1}}{(q_1-1)!}\Big(D_w^{q_1-1}\frac{L_0(x_i)}
{(w-c_1x_i/x_{u_1})\prod_{r=2}^m(1-c_rx_i/x_{u_r})^{q_r}}\Big)\Big|_{w=1}.
\end{equation}
The PFD of the next rational function with respect to $x_i$ is given by
\begin{equation}\label{e-struc-3}
\frac{L_0(x_i)}{(w-c_1x_i/x_{u_1})\prod_{r=2}^m (1-c_rx_i/x_{u_r})^{q_r}}
=\overline{L}_1(x_i,w)+ \frac{\overline A_1(x_i,w)}{(w-c_1x_i/x_{u_1})}
+\sum_{r=2}^m \frac{\overline A_{r}(x_i,w)}{(1-c_rx_i/x_{u_r})^{q_r}},
\end{equation}
where
\[
\overline A_1(x_i,w) = \frac{L_0(x_i)}{\prod_{r=2}^m (1-c_rx_i/x_{u_r})^{q_r}}\Big|_{x_i=wc_1^{-1}x_{u_1}} = g_1(wc_1^{-1}x_{u_1}),
\]
and $\overline{L}_1(x_i,w)$ and $\overline A_r(x_i,w)$ are Laurent polynomial and polynomial in $x_i$ but rational functions in $w$ respectively.
Substituting \eqref{e-struc-3} into \eqref{e-struc-2} yields
\begin{multline*}
f(x_i)=\frac{(-1)^{q_1-1}}{(q_1-1)!} \Big(D_w^{q_1-1}\frac{g_1(wc_1^{-1}x_{u_1})}{w-c_1x_i/x_{u_1}}\Big)\Big|_{w=1} \\ +\frac{(-1)^{q_1-1}}{(q_1-1)!}\Big(D_w^{q_1-1}\overline{L}_1(x_i,w)+D_w^{q_1-1}\sum_{r=2}^m \frac{\overline A_{r}(x_i,w)}{(1-c_rx_i/x_{u_r})^{q_r}} \Big)\Big|_{w=1}.
\end{multline*}
By the uniqueness of the PFD of $f(x_i)$,
\begin{equation}\label{e-struc-4}
\frac{A_1(x_i)}{(1-c_1x_i/x_{u_1})^{q_1}}=\frac{(-1)^{q_1-1}}{(q_1-1)!} \Big(D_w^{q_1-1}\frac{g_1(wc_1^{-1}x_{u_1})}{w-c_1x_i/x_{u_1}}\Big)\Big|_{w=1}.
\end{equation}
We can further write
\begin{align}
D_w^{q_1-1}\frac{g_1(wc_1^{-1}x_{u_1})}{w-c_1x_i/x_{u_1}}
&=\sum_{k=0}^{q_1-1}\binom{q_1-1}{k} D_w^{k}g_1(wc_1^{-1}x_{u_1})\cdot
D_w^{q_1-k-1}(w-c_1x_i/x_{u_1})^{-1}\nonumber\\
&=\sum_{k=0}^{q_1-1}(-1)^{q_1-k-1}(q_1-k-1)! \binom{q_1-1}{k}
\frac{D_w^{k}g_1(wc_1^{-1}x_{u_1})}
{(w-c_1x_i/x_{u_1})^{q_1-k}}.\label{e-struc-5}
\end{align}
Substituting \eqref{e-struc-5} into \eqref{e-struc-4} yields \eqref{e-u-A1} for the $r=1$ case.

Taking $x_i=0$ in \eqref{e-struc-4} gives \eqref{e-u-A10} for the $r=1$ case. That is the formula for $A_1(0)$.

Taking $q_r=1$ in \eqref{e-u-A10} yields \eqref{e-struc-1}.
\end{proof}

\begin{rem}\label{rem-Laurent-view}
The view of the operator $\CT\limits_{x_i=c_r^{-1}x_{u_r}}$
using Laurent series may be helpful.

Expand $f(x_i)$ as a Laurent series at $x_i=c_r^{-1}x_{u_r}$,
\[
f(x_i)=\sum_{k=-q_r}^{\infty} h_k(c_r) (1-c_r x_i/x_{u_r})^k,
\]
where the $h_k(c_r)$ are rational functions free of $x_i$. Comparing this with \eqref{e-intro-4}, we see that
\[
\frac{A_r(x_i)}{(1-c_rx_i/x_{u_r})^{q_r}}=\sum_{k=-q_r}^{-1} h_k(c_r) (1-c_r x_i/x_{u_r})^{k}.
\]
It follows that
$$\CT_{x_i=c_r^{-1}x_{u_r}} f(x_i) =A_r(0)= \sum_{k=-q_r}^{-1} h_k(c_r).$$
In particular,
\[
\CT_{x_i=c_r^{-1}x_{u_r}} (1-c_r x_i/x_{u_r})^k
=\begin{cases}
1, \quad \text{if $k<0$;}\\
0, \quad \text{if $k\ge 0$.}
\end{cases}
\]
\end{rem}

If all the $c_r=1$, the rational function $f(x_i)$ in \eqref{e-intro-3} is a rational function of type $A$ in \eqref{e-intro-F}, i.e.,
\[
f(x_i)=\frac{L_0(x_i)}{\prod_{r=1}^{m} (1-x_i/x_{u_r})^{q_r} },
\]
where the distinct integers $u_r\in \{1,\dots,n\}$ are not equal to $i$.
By \eqref{e-intro-4},
\begin{equation}\label{e-sec2-1}
f(x_i)=L_1(x_i)+\sum_{r=1}^{m} \frac{A_r(x_i)}{(1-x_i/x_{u_r})^{q_r}}.
\end{equation}
We have an expression for $A_r(x_i)$ in \eqref{e-u-A1}.
But the explicit formula for $A_r(x_i)$ without the slack variable $w$ is quite complicated.
Nevertheless, we can obtain its denominator. That is the next result.
\begin{cor}\label{cor-f-typeA}
The denominator of $A_r(x_i)$ in \eqref{e-sec2-1} is $\prod_{l=1, \;l\neq r}^{m} (1-x_{u_r}/x_{u_l})^{q_l+q_r-1}$.
\end{cor}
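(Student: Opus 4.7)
The plan is to read off the denominator directly from the explicit expression \eqref{e-u-A1} of Lemma~\ref{l-A10}. Specializing to $c_r=1$ gives
\[
A_r(x_i)=\sum_{k=0}^{q_r-1}\frac{(-1)^k}{k!}\bigl(D_w^k g_r(wx_{u_r})\bigr)\Big|_{w=1}\cdot(1-x_i/x_{u_r})^k,
\]
so a common denominator of $A_r(x_i)$, viewed as a polynomial in $x_i$ whose coefficients are rational functions in the remaining variables, is any common denominator of the quantities $\bigl(D_w^k g_r(wx_{u_r})\bigr)\big|_{w=1}$ for $k=0,1,\dots,q_r-1$. It therefore suffices to bound how the denominator $\prod_{l\neq r}(1-wx_{u_r}/x_{u_l})^{q_l}$ of $g_r(wx_{u_r})$ grows under repeated $w$-differentiation.

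First I would factor $g_r(wx_{u_r})=L_0(wx_{u_r})\cdot\prod_{l\neq r}(1-wx_{u_r}/x_{u_l})^{-q_l}$ and apply the Leibniz rule. Since $L_0(wx_{u_r})$ is a Laurent polynomial in $w$, its $w$-derivatives contribute no denominator. A direct induction yields
\[
D_w^{j}(1-wx_{u_r}/x_{u_l})^{-q_l}=\frac{q_l(q_l+1)\cdots(q_l+j-1)\,(x_{u_r}/x_{u_l})^{j}}{(1-wx_{u_r}/x_{u_l})^{q_l+j}}.
\]
Hence each summand in the Leibniz expansion of $D_w^k g_r(wx_{u_r})$ carries a denominator $\prod_{l\neq r}(1-wx_{u_r}/x_{u_l})^{q_l+j_l}$, where the exponents $j_0,(j_l)_{l\neq r}$ are non-negative integers with $j_0+\sum_{l\neq r}j_l=k$; in particular $j_l\le k$ for each $l$.

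Setting $w=1$, every such summand has a denominator dividing $\prod_{l\neq r}(1-x_{u_r}/x_{u_l})^{q_l+k}$, so the same bound holds for $\bigl(D_w^k g_r(wx_{u_r})\bigr)\big|_{w=1}$. Taking the least common multiple over $k=0,1,\dots,q_r-1$ raises the exponent $q_l+k$ of each factor to its maximum value $q_l+q_r-1$, giving the claimed denominator $\prod_{l\neq r}(1-x_{u_r}/x_{u_l})^{q_l+q_r-1}$ for $A_r(x_i)$. The main obstacle is purely bookkeeping: one must track the Leibniz multi-indices carefully and verify $j_l\le k\le q_r-1$ uniformly across all summands; once this is in place, no subtle cancellation is needed to reach the stated bound.
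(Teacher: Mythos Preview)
Your proposal is correct and follows essentially the same route as the paper: specialize the formula \eqref{e-u-A1} from Lemma~\ref{l-A10} to $c_r=1$, observe that the denominator of the $k$-th coefficient $\bigl(D_w^k g_r(wx_{u_r})\bigr)\big|_{w=1}$ is $\prod_{l\neq r}(1-x_{u_r}/x_{u_l})^{q_l+k}$, and take the maximum exponent over $k\le q_r-1$. The paper simply asserts this denominator as ``clear'' without spelling out the Leibniz bookkeeping you provide, so your version is a more detailed rendering of the same argument.
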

Note that $A_r(x_i)$ is only a polynomial in $x_i$, but it may be a rational function
in other variables. By Corollary~\ref{cor-f-typeA} we can know that the denominator of $A_r(0)$ is also $\prod_{l=1, \;l\neq r}^{m} (1-x_{u_r}/x_{u_l})^{q_l+q_r-1}$.
\begin{proof}
We only prove the $r=1$ case. The other cases can be obtained by symmetry.

By Lemma \ref{l-A10},
$$A_1(x_i)=\sum_{k=0}^{q_1-1} \frac{(-1)^k}{k!}
\Big(D_w^k \frac{L_0(wx_{u_1})}{\prod_{l=2}^{m} (1-wx_{u_1}/x_{u_l})^{q_l} }\Big)\Big|_{w=1}\cdot(1-x_i/x_{u_1})^{k}.$$
It is clear that $\prod_{l=2}^{m} (1-x_{u_1}/x_{u_l})^{q_l+k}$
is the denominator of
$\big(D_w^k \frac{L_0(wx_{u_1})}{\prod_{l=2}^{m} (1-wx_{u_1}/x_{u_l})^{q_l} }\big)\big|_{w=1}$.
Then the corollary follows.
\end{proof}

We give the following simple result, which will be used in the next section.
\begin{cor}\label{cor-abc}
The PFD of the next rational function with respect to $x$ is giving by
$$ \frac{1}{(1-x/y)^{a}(1-x/z)^b (1-y/z)^c} = \frac{L_1(x,y,z)}{(1-x/y)^a(1-y/z)^{a+b+c-1}} +\frac{L_2(x,y,z)}{(1-x/z)^b (1-y/z)^{a+b+c-1}},$$
where $a, b, c$ are nonnegative integers, and $L_1(x,y,z), L_2(x,y,z)$ are Laurent polynomials in $x,y,z$.
\end{cor}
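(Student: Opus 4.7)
The plan is to treat $f = 1/[(1-x/y)^a(1-x/z)^b(1-y/z)^c]$ as a rational function of the single variable $x$ with parameters $y,z$, and to apply Lemma~\ref{l-A10} with $m=2$, $x_{u_1}=y$, $x_{u_2}=z$, $q_1=a$, $q_2=b$, and $L_0(x) = 1/(1-y/z)^c$. Since $L_0$ has $x$-degree $0$ while the denominator of $f$ in $x$ has positive total degree, the Laurent-polynomial remainder $L_1(x)$ in \eqref{e-intro-4} vanishes, so the PFD takes the form
\[
f = \frac{A_1(x)}{(1-x/y)^a} + \frac{A_2(x)}{(1-x/z)^b}.
\]
It therefore suffices to show that $A_1(x)$ and $A_2(x)$ each have denominator dividing $(1-y/z)^{a+b+c-1}$ up to a Laurent monomial in $y,z$.

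For $A_1(x)$ I would substitute $g_1(wy) = 1/\bigl[(1-wy/z)^b(1-y/z)^c\bigr]$ into \eqref{e-u-A1}. Each derivative $D_w^k g_1(wy)\bigr|_{w=1}$ picks up only powers of $(1-y/z)^{-1}$ from the chain rule, and a direct count shows its denominator divides $(1-y/z)^{b+k+c}$. Taking the common denominator over $0\le k\le a-1$ yields the bound $(1-y/z)^{a+b+c-1}$, whence
\[
\frac{A_1(x)}{(1-x/y)^a} = \frac{L_1(x,y,z)}{(1-x/y)^a(1-y/z)^{a+b+c-1}}
\]
for some Laurent polynomial $L_1(x,y,z)$.

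For $A_2(x)$ an analogous computation with $g_2(wz) = 1/\bigl[(1-wz/y)^a(1-y/z)^c\bigr]$ produces a denominator dividing $(1-z/y)^{a+b-1}(1-y/z)^c$. To bring this to the symmetric form I would invoke the Laurent-monomial identity
\[
(1-z/y)^{a+b-1} = (-z/y)^{a+b-1}(1-y/z)^{a+b-1},
\]
and absorb the factor $(-z/y)^{a+b-1}$ into the numerator. The result is a denominator $(1-y/z)^{a+b+c-1}$ and the claimed expression for $A_2(x)/(1-x/z)^b$.

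The main conceptual step is precisely that last monomial rewrite: applying Lemma~\ref{l-A10} to $A_2$ naturally outputs $(1-z/y)$-powers, which must be converted to $(1-y/z)$-powers so that both summands of the PFD share the common asymmetric denominator $(1-y/z)^{a+b+c-1}$. Once that identity is invoked, everything else is a routine order-of-pole count through the derivative formula \eqref{e-u-A1}, and the corollary follows.
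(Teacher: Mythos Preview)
Your proof is correct and follows the same approach as the paper's: write the PFD in $x$, apply the derivative formula \eqref{e-u-A1} from Lemma~\ref{l-A10} to each piece, and read off the denominator in $(1-y/z)$ by counting the order of pole contributed by each $D_w^k$ term. Your treatment of $A_2$ is in fact more explicit than the paper's---the paper simply says ``the formula for $A_2$ is similar,'' whereas you correctly observe that Lemma~\ref{l-A10} outputs $(1-z/y)$-powers for that piece and that the Laurent-monomial identity $(1-z/y)=(-z/y)(1-y/z)$ is needed to match the stated common denominator.
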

\begin{proof}
We can write
$$ \frac{1}{(1-x/y)^{a}(1-x/z)^b (1-y/z)^c} = \frac{A_1(x)}{(1-x/y)^a} +\frac{A_2(x)}{(1-x/z)^b}$$
by the PFD with respect to $x$.
Here we have explicit formulas for $A_1$ and $A_2$ by Lemma~\ref{l-A10}.
Using \eqref{e-u-A1} in Lemma~\ref{l-A10}, we have
\[
A_1(x)= \sum_{k=0}^{a-1} \frac{(-1)^k}{k!}\Big(D_w^k \frac{1}{(1-wy/z)^b (1-y/z)^c}\Big)\Big|_{w=1}\cdot(1-x/y)^{k}.
\]
It is clear that $(1-y/z)^{b+c+k}$ is the denominator of $\big(D_w^k \frac{1}{(1-wy/z)^b (1-y/z)^c}\big)\big|_{w=1}$.
Then $(1-y/z)^{a+b+c-1}$ is the denominator of $A_1$. It follows that $\frac{A_1(x)}{(1-x/y)^a}$ is of the form
\[
\frac{L_1(x,y,z)}{(1-x/y)^a(1-y/z)^{a+b+c-1}}
\]
in the corollary.
The formula for $A_2$ is similar.
\end{proof}

Using \eqref{e-intro-CTij} and \eqref{e-u-A10} we can obtain the next result for computing
the constant term $\CT\limits_{x_i=x_j^*}f(x_i)$.
\begin{lem}
Let $f(x_i)$, $g_r(x_i)$ and $\CT\limits_{x_i=x_j^*}$ be defined in \eqref{e-intro-3},\eqref{e-defin-g} and \eqref{e-intro-CTij} respectively.
Then
\begin{equation}\label{e-struc-6-0}
\CT_{x_i=x_j^*}f(x_i)=\sum \frac{(-1)^{q_r-1}}{(q_r-1)!}\Big(D_w^{q_r-1}\frac{g_r(wc_r^{-1}x_{u_r})}{w}\Big)\Big|_{w=1},
\end{equation}
where the sum here and in the next equation is over all $r$ such that $u_r=j$.
In particular, if all the $q_r$ in the above sum equal $1$, then
\begin{equation}\label{e-struc-6}
\CT_{x_i=x_j^*}f(x_i)=\sum g_r(c_r^{-1}x_{u_r}).
\end{equation}
\end{lem}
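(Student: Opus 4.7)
The plan is essentially to observe that this lemma is a direct bookkeeping consequence of the definition of $\CT\limits_{x_i=x_j^*}$ combined with the closed-form for $A_r(0)$ proved in Lemma~\ref{l-A10}. There is no genuine obstacle; the main task is simply to verify that the two layers (the summation layer and the single-pole layer) compose without interaction.

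First, I would unwind the definition. Recall that \eqref{e-intro-CTij} sets
\[
\CT_{x_i=x_j^*}f(x_i)= \sum_{u_r=j} A_r(0) = \sum_{u_r=j} \CT_{x_i=c_r^{-1}x_{u_r}} f(x_i),
\]
where $A_r$ is the numerator of the $r$-th PFD piece of $f$ as in \eqref{e-intro-4}. The hypothesis that $c_r\neq c_s$ whenever $x_{u_r}=x_{u_s}$ guarantees that the factors $(1-c_rx_i/x_{u_r})^{q_r}$ indexed by $\{r : u_r = j\}$ correspond to genuinely distinct poles in $x_i$, so each such $A_r(0)$ is independently well-defined and the sum makes sense.

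Next, I would apply \eqref{e-u-A10} of Lemma~\ref{l-A10} term-by-term. For each $r$ with $u_r = j$, that formula gives
\[
\CT_{x_i=c_r^{-1}x_{u_r}} f(x_i) = A_r(0) = \frac{(-1)^{q_r-1}}{(q_r-1)!}\Big(D_w^{q_r-1}\frac{g_r(wc_r^{-1}x_{u_r})}{w}\Big)\Big|_{w=1},
\]
with $g_r$ as defined in \eqref{e-defin-g}. Summing over $r$ with $u_r=j$ yields \eqref{e-struc-6-0} exactly.

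Finally, for the specialization $q_r=1$ for every $r$ in the sum, I would invoke \eqref{e-struc-1}, which reduces each summand above to $g_r(c_r^{-1}x_{u_r})$, producing \eqref{e-struc-6}. The only point worth a line of comment is that the $q_r = 1$ reduction is applied to each summand individually: it is a pointwise consequence of \eqref{e-u-A10}, not a joint condition on all the $q_s$ in $f$. With that noted, the lemma follows.
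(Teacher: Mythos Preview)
Your proposal is correct and matches the paper's approach exactly: the paper does not even write out a formal proof of this lemma, merely stating that it follows from \eqref{e-intro-CTij} and \eqref{e-u-A10}, which is precisely the two-step composition you spell out. Your added remarks about the distinctness of poles and the pointwise nature of the $q_r=1$ specialization are accurate and harmless elaborations.
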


Recall the definition of the constant term operator $\CT\limits_{x_i=x_j^*}$ in \eqref{e-intro-CTij}. It reduces to $\CT\limits_{x_i=x_j}$ when acting on $\A_n$.
We denote $\CT\limits_{x_i=x_j}$ by $[i,j]$ for simplicity and refer it as a commutator. The constant term algebra $\CTA$ of type $A$ is generated by all such commutators. The algebra $\CTA$ is a subalgebra of the algebra consisting of all linear transformations on $\A_n$.

\def\L{\mathbf{L}}

An operator $\L\in \CTA$ is $0$ if and only if $\L\circ F=\L(F)=0$ for all $F\in \A_n$.
The scalar multiplication, sum, and multiplication of operators are naturally defined
for $\L_1,\L_2\in \CTA$ and $F\in \A_n$ as
\begin{subequations}
\begin{equation}
(k\cdot \L_1)\circ F=k\big(\L_1(F)\big) \quad \text{for} \quad k\in \mathbb{C},
\end{equation}
\begin{equation}
(\L_2+\L_1)\circ F=\L_2(F)+\L_1(F),
\end{equation}
and
\begin{equation}
(\L_2 \cdot \L_1)\circ F=\L_2 \big(\L_1(F)\big)
\end{equation}
respectively.
\end{subequations}
We simply write $\L(F)=\L F$ if there is no ambiguity.
We say a monomial operator $L=[i_s,j_s]\cdots [i_1,j_1]$ is of degree $s=\deg(L)$ if it is nonzero. The identity operator $\id$ is of degree $0$. Clearly the product $L_1\cdot L_2$ of two monomial operators is either $0$ or of degree $\deg(L_1)+\deg(L_2)$.

The algebra $\CTA$ of type $A$ is a graded algebra with a natural decomposition
\[
\CTA=\biguplus_{s=0}^{n-1} \CTA^s,
\]
where $\CTA^s$ is the vector space spanned by all monomial operators of degree $s$. We will show that
this is in fact a direct sum decomposition and that $\CTA^s$ is the null space when $s\ge n$.

The $n=1$ case is worth mentioning. Type $A$ rational functions in one variable are indeed complex numbers, i.e., $\A_1=\mathbb{C}$:
by definition each $F\in \A_1$ is a homogeneous Laurent polynomial in $x_1$ of degree $0$, and hence a complex number. Consequently, $\CTA^0$ is spanned by the identity operator $\id$.

In what follows, a graph $G=(V,E)$ is always a simple (vertex) labelled graph unless specified otherwise, where $V$ and $E$ are the vertex set and the edge set of $G$ respectively.
If $(u,v)\in E$, let $G/{u\to v}$ be the graph obtained from $G$ by identifying $u$ with $v$ (or contracting $u$ to $v$).
Define a subset of $\A_n$ according to $H$ (a subgraph of the complete graph $K_n$) as
\[
\A_n(H)=\Big\{F: F=\frac{L(\x)}{\prod_{1\leq i<j\leq n}(1-x_j/x_i)^{q_{ij}}}\in \A_n,
q_{ij}=0 \text{ if } (i,j) \notin E(H)\Big\}.
\]
We call $x_i=x_j$ ($x_j=x_i$) a nonzero pole of $F$ with multiplicity $q_{ij}$ if $q_{ij}>0$.
More generally, we say that $x_i=dx_j$ ($x_j=d^{-1}x_i$) is a pole of a reduced rational function $R$
if $1-x_i/(dx_j)$ is a factor of the denominator of $R$ for $d\in \mathbb{C}\setminus \{0\}$.
Then it is obvious to obtain the next simple result (by Corollary~\ref{cor-f-typeA}).
\begin{lem}\label{lem-struc-1}
Let $F\in \A_n$, $[i,j]\in \CTA$, and $H$ be a subgraph of $K_n$.
\begin{enumerate}
\item If $x_i=x_j$ is not a pole of $F$, then $[i,j]F=0$.

\item The action of $[i,j]$ on $\A_n$ is closed. That is, $[i,j]F\in \A_n$. Explicitly,
$[i,j]$ eliminates $x_i$ and takes $\A_n(H)$ to $\A_n(H/{i\to j})$.
Thus, if a nonzero monomial operator $L\in \CTA$ contains $[i,j]$, then $i$ can not appear to the left of $[i,j]$ in $L$.
\end{enumerate}
 \end{lem}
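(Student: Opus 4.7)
The plan is to view $F\in\A_n$ as a rational function in the single variable $x_i$, apply the PFD machinery developed in Lemma~\ref{l-A10} and Corollary~\ref{cor-f-typeA}, and then read off the claims about $[i,j]F$.

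First I would isolate the $x_i$-part of $F$. Writing $F=L(\x)/\prod_{k<l}(1-x_l/x_k)^{q_{kl}}$ in reduced form, the factors containing $x_i$ are $(1-x_i/x_k)^{q_{ki}}$ for $k<i$ and $(1-x_l/x_i)^{q_{il}}$ for $l>i$. Using the identity $1-x_l/x_i=-(x_l/x_i)(1-x_i/x_l)$, each factor of the second type can be absorbed into the numerator up to a monomial in $x_l/x_i$, so that $F$ is brought to the form
\[
F=\frac{L'(\x)}{M}\cdot\frac{L_0(x_i)}{\prod_{r=1}^m(1-x_i/x_{u_r})^{q_r}},
\]
where $M$ and $L'$ are free of $x_i$, the distinct indices $u_r$ are exactly the neighbors of $i$ in $H$, and $q_r$ is the multiplicity of the edge $(i,u_r)$ in $H$. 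Taking the PFD in $x_i$ as in \eqref{e-sec2-1}, one has $A_r(0)=(L'/M)\cdot \widetilde A_r(0)$, where $\widetilde A_r$ is the PFD coefficient of the pure $x_i$-rational part.

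For part (1), observe that $x_i=x_j$ is a pole of $F$ precisely when $j\in\{u_1,\dots,u_m\}$. If not, then in \eqref{e-intro-CTij} the sum defining $\CT_{x_i=x_j}f(x_i)$ is empty, so $[i,j]F=0$.

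For part (2), suppose $j=u_r$ for some $r$. Then $[i,j]F=(L'/M)\widetilde A_r(0)$ is manifestly free of $x_i$, and by Corollary~\ref{cor-f-typeA} the denominator of $\widetilde A_r(0)$ divides $\prod_{l\neq r}(1-x_j/x_{u_l})^{q_l+q_r-1}$, while the denominator $M$ consists of factors $(1-x_l/x_k)^{q_{kl}}$ with $k,l\neq i$. Each factor of the first type corresponds to an edge $(j,u_l)$ with $u_l$ a neighbor of $i$ in $H$, hence a neighbor of $j$ in $H/{i\to j}$; each factor of the second type corresponds to an edge of $H$ not touching $i$, which is again an edge of $H/{i\to j}$. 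Since the whole operator $\CT_{x_i=x_j}$ is weight-preserving (it extracts a homogeneous degree-$0$ piece in $x_i$ from a homogeneous degree-$0$ function), $[i,j]F$ is homogeneous of degree $0$ in the remaining variables. Combined, this exhibits $[i,j]F$ as an element of $\A_n(H/{i\to j})$. The last assertion is then immediate: if $L=[i_s,j_s]\cdots[i_1,j_1]$ contains $[i,j]=[i_t,j_t]$ and some $[i_p,j_p]$ with $p>t$ involves $i$ (either $i_p=i$ or $j_p=i$), then applying $[i_t,j_t]$ eliminates $x_i$, so $x_i=x_{*}$ cannot be a pole of the intermediate function and part (1) forces the product to vanish.

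The only nontrivial bookkeeping is matching the two sources of denominator factors in Step~3 with the edges of the contracted graph $H/{i\to j}$; everything else reduces to invoking Corollary~\ref{cor-f-typeA} and the definition of $\CT_{x_i=x_j}$. I don't foresee a genuine obstacle here, only the need to be careful that the monomial factors created by the rewrite $1-x_l/x_i=-(x_l/x_i)(1-x_i/x_l)$ do not spoil homogeneity, which is guaranteed because the substitution is degree-$0$ in $(x_i,x_l)$.
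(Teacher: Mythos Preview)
Your proposal is correct and follows exactly the approach the paper indicates: the paper simply declares the lemma ``obvious (by Corollary~\ref{cor-f-typeA})'' without giving any further argument, and you have spelled out precisely those details---isolating the $x_i$-dependence of $F$, applying the PFD of Lemma~\ref{l-A10}/\eqref{e-sec2-1}, and reading off the denominator structure of $A_r(0)$ from Corollary~\ref{cor-f-typeA} to identify $[i,j]F$ as an element of $\A_n(H/{i\to j})$. The only implicit step worth making explicit is that the decomposition $F=(L'/M)\cdot L_0(x_i)/\prod_r(1-x_i/x_{u_r})^{q_r}$ with $L'$ free of $x_i$ relies on reducing (by linearity) to the case where $L(\x)$ is a monomial, which the paper also does.
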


We will identify a monomial operator with its ordered digraph.

 For a monomial operator $L=[i_s,j_s]\cdots [i_1,j_1]\in \CTA$, define an ordered digraph $D(L)$ of $L$ as:  the vertex set of $D(L)$ is $\{i_1,j_1,\dots,i_s,j_s\}$, which is a subset of $\{1,2,\dots,n\}$, and the edge set contains all the directed edges $\{i_k\rightarrow j_k: 1\leq k\leq s\}$. Furthermore, for each inner vertex $j$, if several edges direct to $j$, then we put these edges from right to left as their corresponding order in $L$. That is,  ``ordered'' means that the children of every inner vertex are arranged from right to left according to their order in $L$. See {Figure~\ref{fig-DL-ex}} for an example.
If all the edges of a connected digraph direct towards a vertex $r$, then we say $r$ is the root. Having the above definition, we give the next corollary of Lemma~\ref{lem-struc-1}.

\begin{figure}[htpb]
  \centering
  \includegraphics[width=6 cm]{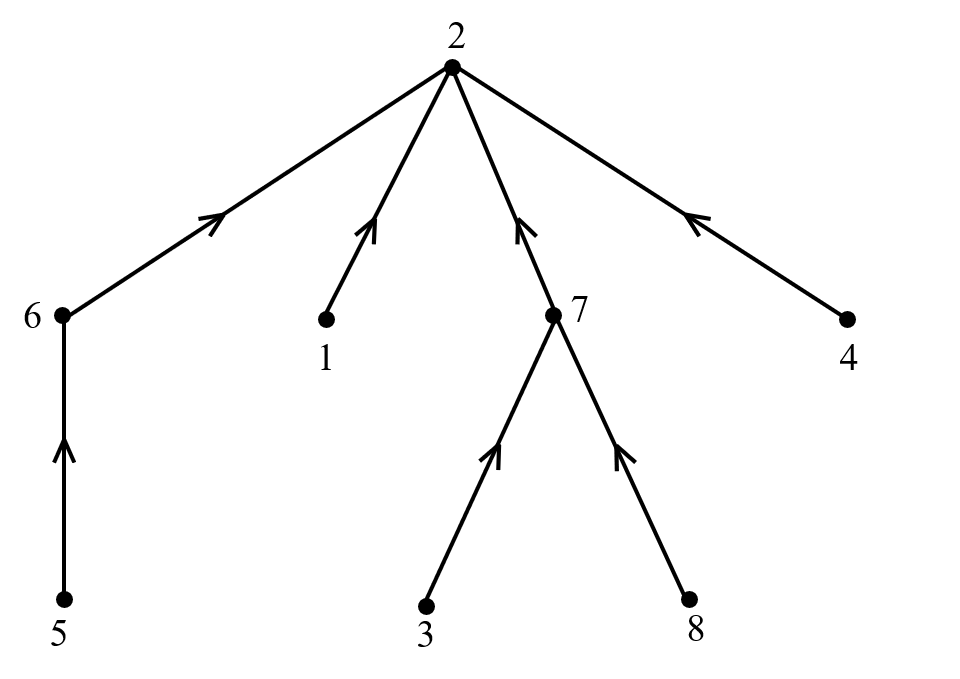}\\
  \caption{$D(L)$ of $L=[6,2][5,6][1,2][7,2][3,7][8,7][4,2]$.}\label{fig-DL-ex}
\end{figure}

\begin{cor}\label{cor-basic}
 Let $L=[i_s,j_s]\cdots [i_1,j_1]\in \CTA$. Then
\begin{enumerate}
 \item The monomial operator $L$ is nonzero only when $s\le n-1$ and all the $i$'s are distinct. Then, $\CTA^{s}$ is the null space for $s\geq n$, and each vertex of $D(L)$ has out degree at most $1$.

 \item The operator $L$ is nonzero only when its digraph $D(L)$ is a directed rooted forest.
\end{enumerate}
 \end{cor}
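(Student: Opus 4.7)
The plan is to extract everything from the observation at the end of Lemma~\ref{lem-struc-1}(2): if $L = [i_s,j_s]\cdots[i_1,j_1]$ is nonzero, then for each $k$ the label $i_k$ does not appear in any commutator $[i_\ell, j_\ell]$ with $\ell > k$. Distinctness of $i_1,\dots,i_s$ is then immediate: if $i_k = i_\ell$ with $k<\ell$, then $i_k$ appears in $[i_\ell,j_\ell]$, which sits to the left of $[i_k,j_k]$, contradicting the observation. Since each vertex can equal $i_k$ for at most one $k$, every vertex of $D(L)$ has out-degree at most $1$, giving the last assertion of part~(1).

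For part~(2) I would prove $D(L)$ is acyclic by contradiction. Suppose there is a directed cycle $v_1 \to v_2 \to \cdots \to v_m \to v_1$ in $D(L)$; because the definition of $[i,j]$ requires $i \neq j$, there are no self-loops, so $m \ge 2$. Let $[i_{k_r}, j_{k_r}]$ be the commutator realizing the edge $v_r \to v_{r+1}$ (indices modulo $m$), so $i_{k_r}=v_r$ and $j_{k_r}=v_{r+1}$. Then $v_r = i_{k_r}$ also equals $j_{k_{r-1}}$, so the label $v_r$ appears in the commutator at position $k_{r-1}$. The Lemma~\ref{lem-struc-1}(2) observation forces $k_{r-1} \le k_r$, and since these are distinct positions, $k_{r-1} < k_r$. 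Going around the cycle yields $k_1 < k_2 < \cdots < k_m < k_1$, a contradiction. Thus $D(L)$ is acyclic with all vertices of out-degree at most~$1$; each connected component is then a tree whose edges all point to the unique out-degree-$0$ vertex (its root), so $D(L)$ is a directed rooted forest.

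The bound $s \le n-1$ in part~(1) now follows from~(2): a rooted forest on $|V(D(L))| \le n$ vertices has at most $|V(D(L))|-1 \le n-1$ edges, and $s$ is precisely the number of edges of $D(L)$. Hence every monomial operator of degree $s \ge n$ vanishes, so $\CTA^s$ is the zero space for $s \ge n$.

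The only substantive step is the acyclicity argument; everything else is bookkeeping once the observation in Lemma~\ref{lem-struc-1}(2) is in hand. The main care needed is consistent cyclic indexing in the chain of inequalities, which is clean but easy to slip up on if one is not careful about whether an index refers to a position in $L$ or a vertex in the cycle.
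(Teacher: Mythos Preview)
Your proof is correct and follows essentially the same strategy as the paper: both extract everything from the observation in Lemma~\ref{lem-struc-1}(2) that $i_k$ cannot reappear to the left of $[i_k,j_k]$, and both prove acyclicity by the same cycle-of-inequalities contradiction. The one organizational difference is that the paper establishes $s\le n-1$ directly in part~(1)---using pigeonhole for $s>n$ and a separate argument for $s=n$ (after $n-1$ applications the result lies in $\A_1=\mathbb{C}$, so the $n$th commutator kills it)---whereas you defer this bound and read it off from the forest edge count once part~(2) is in hand. Your ordering is slightly more economical, since it sidesteps the special $s=n$ case entirely, but the two arguments are otherwise the same.
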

\begin{proof}
(1) Assume two $i$'s in $L$ are identical, say $i_a=i_b$ for  $1\le a <b\le s$. For any $F\in \A_n$, by the part (2) of Lemma~\ref{lem-struc-1} we have $L(F)=0$.

If $s>n$, then there exist two identical $i$'s in $L$ since all the $i\in \{1,2,\dots,n\}$. By the discussion above, we have $L(F)=0$.

If $s=n$ and all the $i$'s are distinct, then $(i_1,\dots,i_n)$ is a permutation of $\{1,2,\dots,n\}$. For any $F\in \A_n$,
we have $E=[i_{n-1},j_{n-1}]\cdots [i_1,j_1]F\in \mathbb{C}$ since $F$ is homogeneous in $x_1,\dots,x_n$ of degree 0.
Applying $[i_n,j_n]$ to $E$ yields 0.

If a vertex of $D(L)$ has out degree at least two, then there exist two identical $i$'s in $L$. By the discussion above, $L=0$.

(2) By the result that out degree of every vertex is at most 1 in the part (1), $D(L)$ must be rooted. Then, it suffices to show that $D(L)$ contains no cycle.
Assume to the contrary that $D(L)$ contains a cycle, then the cycle must be a directed cycle since
out degree of every vertex is at most 1 (again by the part (1)).
Denote the directed cycle of $D(L)$ by
$[i_{s'},j_{s'}], \dots, [i_{2'},j_{2'}], [i_{1'},j_{1'}]$ with $1\le 1'<2'<\cdots<s'\le s$
and $\{i_{1'},\dots, i_{s'}\}=\{j_{1'},\dots, j_{s'}\}$.
That is, there exits an $l'\in \{2',\dots,s'\}$ such that $j_{l'}=i_{1'}$.
By the part (2) of Lemma~\ref{lem-struc-1}, we have $L=0$, a contradiction.
\end{proof}

\section{The structure of $\CTA$}\label{s-structure}

In this section, we obtain a structural result of the constant term algebra of type $A$.
That is, we characterize the basis of $\CTA$.

\subsection{The rules in $\CTA$}

In this subsection, we mainly introduce three important rules in $\CTA$ for constructing its basis: the Commutativity Rule, the Exchange Rule and the V-Formula.
Equation \eqref{e-struc-6-0} is our basic tool for proving these rules.

Let us begin with the next simple result, which will be used in constructing the basis of $\CTA^{n-1}$.
\begin{lem}\label{lem-extrachange}
Let $F$ be a rational function in two variables of the form
\[
F=\frac{L(x_i,x_j)}{(1-x_i/x_j)^m},
\]
where $L(x_i,x_j)$ is a homogeneous Laurent polynomial in $x_i$ and $x_j$ of degree 0 with coefficients in $\mathbb{C}$, and $m$ is a nonnegative integer.
Then
\begin{equation}\label{e-struc-9}
[i,j]F=-[j,i]F\in \mathbb{C}.
\end{equation}
\end{lem}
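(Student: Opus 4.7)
The plan is to apply Lemma~\ref{l-A10} twice, using the homogeneity of $L$ to reduce both commutators to a one-variable expression, and then to verify the resulting identity termwise.

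First, the homogeneity of degree $0$ of $L(x_i,x_j)$ allows me to write $L(x_i,x_j) = M(x_i/x_j)$ for some Laurent polynomial $M(w) \in \mathbb{C}[w,w^{-1}]$. Viewing $F$ as a rational function in $x_i$ with single pole $(1-x_i/x_j)^m$ and applying \eqref{e-u-A10} with $g(x_i) = F(1-x_i/x_j)^m = L(x_i,x_j)$, I obtain
\[
[i,j]F = \frac{(-1)^{m-1}}{(m-1)!}\left(D_w^{m-1}\frac{M(w)}{w}\right)\Big|_{w=1},
\]
which is manifestly a complex number.

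For $[j,i]F$, I need to rewrite the denominator. Using $1-x_i/x_j = -(x_i/x_j)(1-x_j/x_i)$, I obtain
\[
F = \frac{(-1)^m L(x_i,x_j)(x_j/x_i)^m}{(1-x_j/x_i)^m}.
\]
Applying \eqref{e-u-A10} again, together with the identity $L(x_i, wx_i) = M(1/w)$ coming from homogeneity, and collecting the sign factors $(-1)^m$ (from the rewrite) and $(-1)^{m-1}$ (from \eqref{e-u-A10}) into $(-1)^{2m-1} = -1$, I get
\[
[j,i]F = \frac{-1}{(m-1)!}\left(D_w^{m-1}\,w^{m-1}M(1/w)\right)\Big|_{w=1},
\]
also a complex number. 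This settles the \textquotedblleft$\in\mathbb{C}$\textquotedblright\ part of the claim.

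It then remains to verify $[i,j]F + [j,i]F = 0$, which reduces to
\[
(-1)^{m-1}\left(D_w^{m-1}\frac{M(w)}{w}\right)\Big|_{w=1} = \left(D_w^{m-1}\,w^{m-1}M(1/w)\right)\Big|_{w=1}.
\]
Writing $M(w) = \sum_k m_k w^k$, each side becomes a sum over $k$ of $m_k$ times a falling factorial at $w=1$, and the identity reduces termwise to $\prod_{j=1}^{m-1}(j-k) = (-1)^{m-1}\prod_{j=1}^{m-1}(k-j)$, which is immediate. The main obstacle will be the careful tracking of signs in the second computation, where three separate sources — the denominator rewrite, the formula \eqref{e-u-A10}, and the homogeneity substitution $w \mapsto 1/w$ — must combine to produce the clean symmetric identity above.
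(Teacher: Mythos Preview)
Your proof is correct and follows essentially the same route as the paper: both compute $[i,j]F$ and $[j,i]F$ via \eqref{e-u-A10} after rewriting the denominator, and reduce to the elementary sign identity $\prod_{j=1}^{m-1}(j-k)=(-1)^{m-1}\prod_{j=1}^{m-1}(k-j)$; the only cosmetic difference is that the paper invokes linearity up front to take $L=(x_i/x_j)^k$, whereas you carry a general $M$ and expand termwise at the end. One small omission: you should dispose of the case $m=0$ separately (both sides vanish since there is no pole), as \eqref{e-u-A10} and your formulas involving $(m-1)!$ presuppose $m\ge 1$.
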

\begin{proof}
For $m=0$, both sides of \eqref{e-struc-9} vanish by the part (1) of Lemma~\ref{lem-struc-1}.
Hence, we assume $m$ to be a positive integer in the following proof.
Since $L(x_i,x_j)$ is homogeneous in $x_i$ and $x_j$ of degree 0, we may assume $L=(x_i/x_j)^k$ for an integer $k$  by linearity.
By \eqref{e-struc-6-0},
\[
[i,j]F=[i,j]\frac{(x_i/x_j)^k}{(1-x_i/x_j)^m}=\bigg(\frac{(-D_w)^{m-1}}{(m-1)!}w^{k-1}\bigg)\bigg|_{w=1}
=\frac{(-1)^{m-1}}{(m-1)!}(k-1)(k-2)\cdots (k-m+1),
\]
and
\begin{multline*}
[j,i]F= [j,i]\frac{(-1)^m (x_i/x_j)^{k-m}}{(1-x_j/x_i)^m}
= \bigg(\frac{(-D_w)^{m-1}}{(m-1)!}(-1)^m w^{-k+m-1}\bigg)\bigg |_{w=1}\\
= \frac{(-1)^{m-1}}{(m-1)!}(-1)^m (-k+m-1)\cdots (-k+2)(-k+1)
=\frac{(-1)^{m}}{(m-1)!}(k-1)(k-2)\cdots (k-m+1).
\end{multline*}
Then \eqref{e-struc-9} holds and the lemma follows.
\end{proof}

To find the relationship between the operators of $\CTA$, we need a formula for $[i',j'][i,j] F$.
Equation \eqref{e-struc-6-0} provides an explicit formula for $[i,j]F$ involving the slack variable $w$ (set $w=1$ at last) and the differential operator $D_w$.
It is not wise to remove $w$ before applying $[i',j']$ to $[i,j]F$. Because the expression for $[i,j]F$ after removing
$w$ is quite complicated.
Thus, it is necessary to work on parameterized type $A$ rational functions.
Denote by $ \A_n[u,u^{-1}]$ the ring of Laurent polynomial of $u$ with coefficients in $\A_n$.
It is natural to obtain the following switch rules.
\begin{lem}\label{lem-exchange}
 Let $u$ be a parameter in $\mathbb{C}$, and let $f,g$ be rational functions of the form
\[
f=\frac{E(u,x_i,x_j)}{(u-x_i/x_j)^q}, \qquad  g=\frac{E(u,x_i,x_j)}{(1-ux_i/x_j)^q},
\]
where $q\in \mathbb{Z}$ and $x_i= x_j$ is not a pole of $E\in \A_n[u,u^{-1}]$. Then
\begin{align}
  D_u\CT_{x_i=x_j^*}f &=\CT_{x_i=x_j^*}D_u f,   \label{e-(1)} \\
  \CT_{x_i=x_j}(g |_{u=1})&=\Big(\CT_{x_i=x_j^*} g\Big)\Big|_{u=1}. \label{e-(2)}
\end{align}
\end{lem}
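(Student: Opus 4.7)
My plan for \eqref{e-(2)} is direct: apply the explicit formula \eqref{e-u-A10} from Lemma~\ref{l-A10} to both sides. With parameters $c_r = u$, $u_r = j$, $q_r = q$ and a single pole term, one computes
\[
\CT_{x_i = x_j^*} g = \frac{(-1)^{q-1}}{(q-1)!}\Bigl[D_w^{q-1}\frac{E(u, wu^{-1}x_j, x_j)}{w}\Bigr]_{w=1},
\]
so substituting $u=1$ afterward produces the expression with $E(1, wx_j, x_j)$. Conversely, setting $u=1$ first gives $g|_{u=1} = E(1, x_i, x_j)/(1-x_i/x_j)^q$, to which the formula with $c_r = 1$ applies to yield exactly the same expression. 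So the two sides of \eqref{e-(2)} agree.

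For \eqref{e-(1)}, the plan is to introduce the substitution $x_i = uy$, which converts $f$ into $\tilde f(u, y) := u^{-q} E(u, uy, x_j)/(1-y/x_j)^q$. The key gain is that the denominator $(1-y/x_j)^q$ in $y$ is free of $u$. By uniqueness of the PFD and its invariance under this substitution, $\CT_{x_i = x_j^*} f = \CT_{y = x_j}\tilde f$, and analogously $\CT_{x_i = x_j^*}(D_u f) = \CT_{y = x_j}[(D_u f)(u, uy)]$ since $D_u f$ has the same shape with $q$ replaced by $q+1$. Because the $y$-denominator of $\tilde f$ is $u$-free, $D_u$ commutes term-by-term with the $y$-PFD, giving $D_u \CT_{y = x_j}\tilde f = \CT_{y = x_j} D_u \tilde f$.

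The chain rule applied to $f(u, x_i) = \tilde f(u, x_i/u)$ yields $(D_u f)(u, uy) = D_u \tilde f(u, y) - u^{-1}y(D_y \tilde f)(u, y)$. Combining with the previous identities, \eqref{e-(1)} reduces to showing $\CT_{y=x_j}[y(D_y \tilde f)] = 0$. Writing the PFD $\tilde f = \tilde L + \tilde A(y)/(1-y/x_j)^q$ with $\tilde A$ a polynomial in $y$ of degree less than $q$, a short computation gives that the pole part of $y D_y \tilde f$ at $y = x_j$ is
\[
\frac{y\bigl[(D_y \tilde A)(1 - y/x_j) + q x_j^{-1}\tilde A\bigr]}{(1-y/x_j)^{q+1}},
\]
whose numerator has degree at most $q$ (already in PFD form) and vanishes at $y=0$ because of the overall factor of $y$; hence $\CT_{y=x_j}$ returns $0$.

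The main obstacle, and what makes \eqref{e-(1)} subtler than \eqref{e-(2)}, is that the pole $x_i = ux_j$ moves with $u$, so the $x_i$-PFD of $f$ cannot be differentiated termwise in $u$. The substitution $x_i = uy$ is designed precisely to decouple the pole location from $u$, at the price of a chain-rule correction $u^{-1} y D_y \tilde f$ whose $\CT_{y=x_j}$ must then be checked to vanish; this vanishing, via the factor of $y$ in the numerator of the PFD, is the essential technical point.
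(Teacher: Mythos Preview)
Your argument is correct for both identities, but takes a different route from the paper.

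For \eqref{e-(2)} the two proofs are essentially the same computation packaged differently: you invoke the closed formula \eqref{e-u-A10} of Lemma~\ref{l-A10} and compare both sides, while the paper phrases the same thing via the Laurent expansion of Remark~\ref{rem-Laurent-view}. No substantive difference.

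For \eqref{e-(1)} the approaches genuinely diverge. The paper expands $f$ directly as a Laurent series $\sum_{k\ge -q} h_k(u)(1-x_i/(ux_j))^k$ about the moving pole $x_i=ux_j$ and differentiates termwise in $u$; the extra terms produced have the form $(x_i/x_j)(1-x_i/(ux_j))^{k-1}$, and one checks $\CT_{x_i=ux_j}$ annihilates each of them. Your substitution $x_i=uy$ instead \emph{freezes} the pole at $y=x_j$, so that $D_u$ manifestly commutes with the $y=x_j$ pole part of the PFD; the price is a chain-rule correction $u^{-1}yD_y\tilde f$, whose vanishing under $\CT_{y=x_j}$ you verify by noting the numerator of its pole part is $y\cdot(\text{polynomial})$. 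The paper's approach is shorter; yours makes the mechanism (decoupling the pole location from $u$) more transparent and would scale naturally if one needed higher derivatives in $u$.

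Two small imprecisions worth tightening: (i) the case $q\le 0$ should be dismissed explicitly (both sides vanish); (ii) your sentence ``the $y$-denominator of $\tilde f$ is $u$-free'' is not literally true, since poles of $E$ at $x_i=x_k$ become poles of $\tilde f$ at $y=u^{-1}x_k$. What you actually need, and what holds, is that the \emph{pole at $y=x_j$} is $u$-free and that the other (possibly $u$-dependent) pole terms contribute nothing to $\CT_{y=x_j}$. Relatedly, your displayed PFD $\tilde f=\tilde L+\tilde A(y)/(1-y/x_j)^q$ suppresses those other pole terms; since $yD_y$ of them has no pole at $y=x_j$, the omission is harmless, but it should be said.
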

\begin{proof}
The case $q\le 0$ is trivial since both sides of \eqref{e-(1)} and \eqref{e-(2)} vanish.
So we assume $q>0$.

By Remark \ref{rem-Laurent-view},
we can write
$$ f = \sum_{k=-q}^{\infty} h_k(u) \big(1-x_i/(ux_j)\big)^{k}.$$


To prove \eqref{e-(1)}, we compute directly:
\[
D_u \CT_{x_i=x_j^*} f =D_u \CT_{x_i=ux_j} f= D_u \sum_{k=-q}^{-1} h_k(u) = \sum_{k=-q}^{-1} h_k'(u),
\]
and
\begin{align*}
\CT_{x_i=x_j^*}D_u f
&= \CT_{x_i=ux_j} \sum_{k=-q}^{\infty} \Big(h_k'(u)\big(1-x_i/(ux_j)\big)^k
+ k u^{-2} h_k(u)  (x_i/x_j)\big(1-x_i/(ux_j)\big)^{k-1}\Big) \\
&= \sum_{k=-q}^{-1} h_k'(u)+\sum_{k=-q}^{\infty}k u^{-2} h_k(u)\CT_{x_i=ux_j}(x_i/x_j)\big(1-x_i/(ux_j)\big)^{k-1}=\sum_{k=-q}^{-1} h_k'(u).
\end{align*}
Here the last equation holds since
$\CT\limits_{x_i=ux_j}(x_i/x_j)\big(1-x_i/(ux_j)\big)^{k-1}=0$
($x_i=ux_j$ is not a pole for $k>0$ and using the definition \eqref{e-intro-CTij} directly for $k<0$).

To prove \eqref{e-(2)}, using Remark~\ref{rem-Laurent-view} again, we can write
\[
  g  =\frac{E(u,x_i,x_j)}{(1-ux_i/x_j)^q}= \sum_{k=-q}^{\infty} \tilde{h}_k(u) (1-ux_i/x_j)^{k},
\]
and
\[
g|_{u=1}=\frac{E(1,x_i,x_j)}{(1-x_i/x_j)^q}=\sum_{k=-q}^{\infty} \overline{h}_k (1-x_i/x_j)^{k}.
\]
Then $\tilde{h}_k(1)=\overline{h}_k$ exists for each $k$, and we obtain
\[
\CT_{x_i=x_j}(g |_{u=1})= \CT_{x_i=x_j} \sum_{k=-q}^{\infty} \tilde{h}_k(1) (1-x_i/x_j)^{k} = \sum_{k=-q}^{-1} \tilde{h}_k(1),
\]
and
\[
\Big(\CT_{x_i=x_j^*} g\Big)\Big|_{u=1} =\Big(\CT_{x_i=u^{-1}x_j} g\Big)\Big|_{u=1} = \Big(\sum_{k=-q}^{-1} \tilde{h}_k(u)\Big)\Big|_{u=1} = \sum_{k=-q}^{-1} \tilde{h}_k(1).
\]
Note that this argument does not work if $E$ has a pole at $x_i=x_j$.
\end{proof}

\begin{rem}
Equation \eqref{e-(2)}
also holds if $g$ is multiplied by $(1-x_i/x_j)^{-q'}$ for another integer $q'$, but it seems hard to verify that
directly.
\end{rem}

Now we introduce the Commutativity Rule.
\begin{lem}[Commutativity Rule]\label{lem-comm}
Suppose $1\le i,j,k,l \le n$ are distinct integers. Then
\[
[k,l][i,j]=[i,j][k,l], \quad i.e., \quad \CT_{x_k=x_l}\CT_{x_i=x_j}F=\CT_{x_i=x_j}\CT_{x_k=x_l}F \text{ for all } F\in \A_n.
\]
 \end{lem}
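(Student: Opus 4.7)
The plan is to apply Lemma \ref{l-A10} twice to express $[k,l][i,j]F$ as a combined derivative in two slack variables, yielding a formula that is manifestly symmetric under the interchange of the two operators. Set $q = q_{ij}$ and $q' = q_{kl}$ for the multiplicities of the two poles in $F\in\A_n$, and let $g^{(i)}(x_i) = F\cdot(1-x_i/x_j)^q$, so that $g^{(i)}(wx_j) = F|_{x_i\mapsto wx_j}\cdot(1-w)^q$. Lemma \ref{l-A10} gives
\[
[i,j]F = \frac{(-1)^{q-1}}{(q-1)!}\left.D_w^{q-1}\frac{g^{(i)}(wx_j)}{w}\right|_{w=1}.
\]

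Next, apply $[k,l]$ to the right-hand side. Since $[k,l]$ operates only on the variable $x_k$ (treating all other variables, including the slack $w$, as parameters), it is linear over the subring of functions independent of $x_k$, and hence commutes with $D_w$, with the specialization $w=1$, and with multiplication by $(1-w)^q/w$. Moreover, the $x_k=x_l$ pole of $F|_{x_i\mapsto wx_j}$ has the same multiplicity $q'$ as in $F$, because the corresponding denominator factor does not involve $x_i$. Applying Lemma \ref{l-A10} a second time to $[k,l]F|_{x_i\mapsto wx_j}$ and combining yields
\[
[k,l][i,j]F = \frac{(-1)^{q+q'}}{(q-1)!\,(q'-1)!}\left.D_w^{q-1}D_{w'}^{q'-1}\frac{(1-w)^q(1-w')^{q'}\,F|_{x_i\mapsto wx_j,\,x_k\mapsto w'x_l}}{ww'}\right|_{w=w'=1}.
\]
This expression is manifestly symmetric under the interchange $(w,x_i,x_j,q)\leftrightarrow(w',x_k,x_l,q')$; the analogous computation of $[i,j][k,l]F$ merely swaps the order of $D_w^{q-1}$ and $D_{w'}^{q'-1}$ (which commute, as do the two specializations), so produces the same formula, giving the commutativity.

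I expect the main obstacle to be the rigorous justification that $[k,l]$ commutes with $D_w$, with the specialization $w=1$, and with multiplication by functions of $w$. This rests on the linearity of $[k,l]$ over the ring of functions free of $x_k$, which admits a clean proof via Remark \ref{rem-Laurent-view}: extraction of the polar part of the Laurent expansion in $(1-x_k/x_l)$ is linear over the coefficient ring (which freely contains $w$ as a parameter), so it commutes with every operation on $w$ alone. A minor check is also required to confirm that the substitutions $x_i\mapsto wx_j$ and $x_k\mapsto w'x_l$ are compatible — the factors $(1-w)^q$ and $(1-w')^{q'}$ in the numerator kill exactly the two poles destroyed by the substitutions, and any cross-factor like $(1-x_k/x_i)$ becomes $1-(w'/w)(x_l/x_j)$, which is generically regular, so the combined expression is well-defined as a rational function in the slack variables before specializing at $w=w'=1$.
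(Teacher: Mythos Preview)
Your proof is correct and follows essentially the same route as the paper: both apply the differential formula from Lemma~\ref{l-A10} twice with two independent slack variables and arrive at the symmetric expression
\[
\frac{(-D_u)^{m_1-1}}{(m_1-1)!}\,\frac{(-D_v)^{m_2-1}}{(m_2-1)!}\,\frac{E(ux_j,x_j,vx_l,x_l)}{uv}\Big|_{u=v=1},
\]
which is exactly your formula once one sets $E=F\cdot(1-x_i/x_j)^q(1-x_k/x_l)^{q'}$. The only cosmetic difference is that the paper first factors out $(1-x_k/x_l)^{-m_2}$ explicitly (since it is independent of the first slack variable) before applying $[k,l]$, whereas you phrase this step as ``$[k,l]$ commutes with $D_w$ and with specialization $w=1$''; your justification via Remark~\ref{rem-Laurent-view} is sound, because the pole $x_k=x_l$ does not involve $w$ and so the polar-part extraction is linear over the coefficient ring containing~$w$.
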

\begin{proof}
For any $F\in \A_n$, suppose $x_i=x_j$ and $x_k=x_l$ are poles of $F$ with multiplicities $m_1$ and $m_2$ respectively. Then, we can write
\begin{align*}
F&= \frac{E(x_i,x_j,x_k,x_l)}{(1-x_i/x_j)^{m_1}(1-x_k/x_l)^{m_2}},
\end{align*}
where $E\in \A_n$ may contain other variables but $x_i=x_j$ and $x_k=x_l$ are not poles of $E$.
By applying the constant term operator $[i,j]$ to $F$ and using \eqref{e-struc-6-0},
\begin{align*}
[i,j]F&= \frac{(-D_u)^{m_1-1}}{(m_1-1)!} \frac{E(ux_j,x_j,x_k,x_l)}{u(1-x_k/x_l)^{m_2}} \Big |_{u=1}\\
&= \frac{1}{(1-x_k/x_l)^{m_2}} \left(\frac{(-D_u)^{m_1-1}}{(m_1-1)!} \frac{E(ux_j,x_j,x_k,x_l)}{u} \Big |_{u=1}\right).
\end{align*}
Next applying $[k,l]$ to both sides of the above and using \eqref{e-struc-6-0} again, we have
\begin{align*}
[k,l][i,j]F&= [k,l]\frac{1}{(1-x_k/x_l)^{m_2}} \left(\frac{(-D_u)^{m_1-1}}{(m_1-1)!} \frac{E(ux_j,x_j,x_k,x_l)}{u} \Big |_{u=1}\right) \\
&= \frac{(-D_v)^{m_2-1}}{(m_2-1)!}\frac{1}{v}\left(\frac{(-D_u)^{m_1-1}}{(m_1-1)!} \frac{E(ux_j,x_j,vx_l,x_l)}{u} \Big |_{u=1}\right)\Big |_{v=1} \\
&= \frac{(-D_u)^{m_1-1}}{(m_1-1)!} \frac{(-D_v)^{m_2-1}}{(m_2-1)!} \frac{E(ux_j,x_j,vx_l,x_l)}{uv}\Big|_{u=v=1}.
\end{align*}

The computation of $[i,j][k,l]F$ is completely same, except that we shall use $v$ for the first slack variable when applying $[k,l]$ and
use $u$ for the second slack variable.
We obtain
$$[i,j][k,l]F=\frac{(-D_v)^{m_2-1}}{(m_2-1)!}\frac{(-D_u)^{m_1-1}}{(m_1-1)!}  \frac{E(ux_j,x_j,vx_l,x_l)}{uv}\Big|_{v=u=1}.$$
This completes the proof.
\end{proof}

By Lemma~\ref{lem-comm}, we can obtain the next corollary. Then we can identify
a monomial operator of $\CTA$ as its corresponding ordered digraph.
\begin{cor}\label{cor-identify}
If two monomial operators $L,L'\in \CTA$ have the same ordered digraph, i.e.,
$D(L)=D(L')$, then $L=L'$.
\end{cor}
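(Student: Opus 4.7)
The plan is to use the Commutativity Rule (Lemma~\ref{lem-comm}) to slide adjacent commutators past one another, transforming the monomial expression for $L$ into that for $L'$. Since $D(L) = D(L')$, the two monomials are products of the same multiset of commutators $[i_k, j_k]$, possibly written in different orders; the content of the corollary is that every such reordering yields the same operator.

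First I would note that it suffices to treat the case where $D(L)$ is a directed rooted forest with all sources distinct: otherwise Corollary~\ref{cor-basic} forces both $L$ and $L'$ to be $0$ directly from the digraph. Then the edge set of $D(L)$ carries a partial order $P$ in which $e \prec e'$ means ``$e$ must be applied strictly before $e'$'', i.e., ``$e$ lies strictly to the right of $e'$ in the monomial''. The relation $P$ is generated by two families of covering relations: (a) whenever $e$ enters a vertex $v$ and $e'$ exits $v$, we set $e \prec e'$; and (b) whenever $e$ and $e'$ share a target and the ordered digraph specifies $e$ to be to the right of $e'$, we set $e \prec e'$.

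Next I would observe that the orderings of edges corresponding to valid monomials with ordered digraph $D(L)$ are exactly the linear extensions of $P$. The type~(a) relations are forced by Lemma~\ref{lem-struc-1}(2), which forbids the source $i$ of a commutator $[i,j]$ from reappearing to the left; the type~(b) relations are the defining constraint of ``ordered''. So the monomial expressions of $L$ and $L'$ correspond to two linear extensions of $P$.

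Finally, I would apply the standard fact that any two linear extensions of a finite poset are connected by a sequence of adjacent transpositions of incomparable elements, together with the key observation that any two edges incomparable in $P$ have disjoint vertex sets. Indeed, if $e = u \to v$ and $e' = u' \to v'$ share a vertex, the cases are: $u = u'$ (impossible, as sources in a nonzero monomial are distinct); $v = v'$ (siblings, comparable via (b)); $u = v'$ (then $e$ exits $u$ while $e'$ enters $u$, comparable via (a)); and symmetrically for $v = u'$. Once this is verified, each adjacent transposition in the sequence swaps two commutators on disjoint index sets, which the Commutativity Rule executes without changing the operator; composing all swaps shows $L = L'$. The main obstacle is precisely the verification that incomparable edges are vertex-disjoint, but the case analysis above handles it cleanly.
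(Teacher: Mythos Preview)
Your argument is correct and rests on the same single ingredient as the paper's proof—the Commutativity Rule—but the two are organized quite differently. The paper proceeds by induction on the degree: it first separates the connected components of $D(L)$ (which commute pairwise), and then, within a single tree with root $r$ and children $r_1,\dots,r_k$, it slides each subtree realization $L_i$ adjacent to its edge $[r_i,r]$ and collapses both $L$ and $L'$ to the canonical word $[r_k,r]\cdots[r_1,r]L_k\cdots L_1$. Your route is more global: you encode all order constraints as a poset $P$ on the edge set, identify $L$ and $L'$ with two linear extensions of $P$, and connect them by adjacent transpositions, each licensed by Lemma~\ref{lem-comm} because incomparable edges share no vertex. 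Your formulation is clean and avoids the nested induction; the paper's formulation, on the other hand, directly produces the canonical realization $L(D)$ that is used immediately afterward in the text.

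One caveat that applies equally to both arguments: your appeal to Lemma~\ref{lem-struc-1}(2) to force the type~(a) relations tacitly presupposes that the word $L$ is not the zero operator. For instance, $[2,3][3,4]$ and $[3,4][2,3]$ share the path digraph $2\to 3\to 4$, yet the first is the zero operator while the second is not; only the second is a linear extension of your $P$. The paper's induction on $\deg(L)$ carries the same implicit nonzero assumption, so this is a wrinkle in how the statement is phrased rather than a flaw specific to your method.
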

\begin{proof}
We prove by induction on $s=\deg(L)$. The base cases $s\leq 1$ is trivial.
Assume the corollary holds for $s-1$ and less. We prove it holds for $s$.

If $D(L)$ has at least two components, say $T_1,T_2,\dots, T_k$ ($k\geq 2$).
For a given $1\leq i\leq k$, if there exist two monomial operators $L_i,L_i'\in \CTA$ such that $D(L_i)=T_i=D(L_i')$, then $L_i=L_i'$ by the induction hypothesis.
The corollary follows by the commutativity of the $L_i$'s using Lemma~\ref{lem-comm}.

If $D(L)$ has only one component, then by the part (2) of Corollary~\ref{cor-basic},
$D(L)$ must be an ordered directed rooted tree with root $r$.
Assume the vertices $r_1,\dots, r_k$ directing to $r$ are arranged from right to left.
For $i=1,\dots,k$, let $T_i$ denote the ordered subtree rooted at $r_i$.
See {Figure~\ref{fig-Ti-cor}} below.
If there exist two monomial operators $L_i,L_i'\in \CTA$ such that
$D(L_i)=T_i=D(L_i')$ for a given $1\leq i\leq k$, then $L_i=L_i'$ by the induction hypothesis. By the Commutativity Rule (Lemma~\ref{lem-comm}), we have
\[
L=[r_k,r]L_k[r_{k-1},r]L_{k-1}\cdots [r_1,r]L_1=[r_k,r]\cdots [r_1,r]L_k\cdots L_1=L'.
\]
\begin{figure}[htpb]
  \centering
  \includegraphics[width=6 cm]{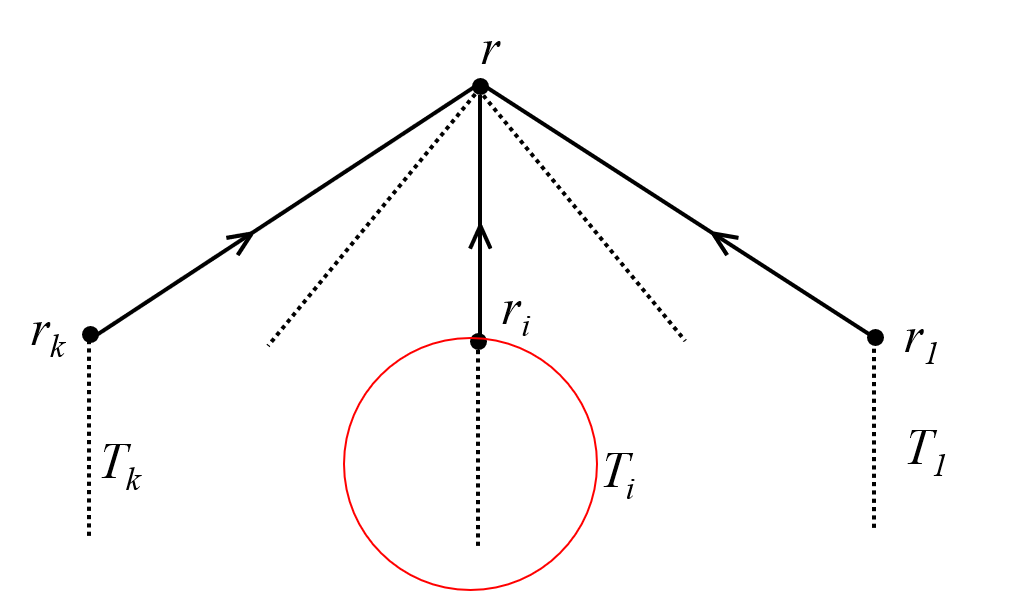}\\
  \caption{$T_i$: the ordered subtree rooted at $r_i$.}\label{fig-Ti-cor}
\end{figure}
This completes the proof.
\end{proof}

The second basic rule in $\CTA$ is the next Exchange Rule.
\begin{lem}[Exchange Rule]\label{lem-Exchangerule}
For distinct $i,j,k\in \{1,2,\dots,n\}$ and $F\in \A_n$,
\[
[j,k] [i,j]=-[i,k][j,i], \quad i.e., \quad
\CT_{x_j=x_k}\CT_{x_i=x_j}F=-\CT_{x_i=x_k}\CT_{x_j=x_i}F.
\]
\end{lem}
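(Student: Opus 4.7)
The plan is to verify the identity by direct slack-variable computation, mirroring the proof of Lemma~\ref{lem-comm}. By linearity of the operators on $\A_n$, it suffices to check the equation for $F$ in the canonical form
\[
F = \frac{E(x_i, x_j, x_k, \x')}{(1-x_i/x_j)^a(1-x_i/x_k)^b(1-x_j/x_k)^c},
\]
where $a, b, c$ are nonnegative integers and $E$ is a Laurent polynomial regular at each of the three loci $x_i=x_j$, $x_i=x_k$, $x_j=x_k$; the remaining variables $\x'$ are inert under the relevant operators.

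For the left-hand side, I apply \eqref{e-struc-6-0} with a slack variable $u$ (substitution $x_i \to ux_j$) to get
\[
[i,j]F = \frac{(-D_u)^{a-1}}{(a-1)!} \left.\frac{E(ux_j, x_j, x_k, \x')}{u\,(1-ux_j/x_k)^b(1-x_j/x_k)^c}\right|_{u=1},
\]
whose pole at $x_j = x_k$ has multiplicity $\ell \le a+b+c-1$. I then apply $[j,k]$ via a second slack $w$ (substitution $x_j \to wx_k$): multiply the result by $(1-x_j/x_k)^\ell$ to regularize, divide by $w$, and apply $(-D_w)^{\ell-1}/(\ell-1)!$ before setting $w=1$. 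Symmetrically, I rewrite $F$ via $(1-x_i/x_j)^{-a} = (-x_j/x_i)^a(1-x_j/x_i)^{-a}$, which contributes the factor $(-1)^a$ that ultimately yields the minus sign in the identity. I apply $[j,i]$ with a slack $u'$ (for $x_j \to u'x_i$) and then $[i,k]$ with a slack $w'$ (for $x_i \to w'x_k$), obtaining an analogous double-slack expression for $[i,k][j,i]F$.

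The final step is to show these two expressions are negatives of each other. The natural substitution $(u,w) \leftrightarrow (1/u', u'w')$ matches the diagonal arguments of $E$ in the two sides, namely $(x_i, x_j) = (uwx_k, wx_k) = (w'x_k, u'w'x_k)$. The main obstacle is the algebraic bookkeeping of how the sequential differential operators $(-D_u)^{a-1}$ and $(-D_w)^{\ell-1}$ (each followed by setting the slack to $1$) transform under this substitution, and confirming that the result combines with $(-1)^a$ to produce exactly the minus sign. A cleaner inductive alternative is to first verify the base case $a=b=c=1$ with arbitrary $E$ directly: using \eqref{e-struc-1}, $[i,j]F = E(x_j,x_j,x_k,\x')/(1-x_j/x_k)^2$ and $[j,i]F = -E(x_i,x_i,x_k,\x')/(1-x_i/x_k)^2$, and a single application of \eqref{e-u-A10} with $q=2$ reduces both composed operators to the common expression $E(x_k,x_k,x_k,\x') - x_k(\partial_1+\partial_2)E(x_k,x_k,x_k,\x')$ up to the sign $(-1)^a = -1$. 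The general case $(a,b,c)$ then follows either by expanding $E$ monomial-by-monomial and verifying the resulting finite combinatorial identity in the slack variables, or by iteratively applying Corollary~\ref{cor-abc} to split $F$ into pieces with fewer simultaneously active poles among the three factors.
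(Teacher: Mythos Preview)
Your setup and the slack-variable framework are correct, but the proof has a genuine gap at precisely the point you yourself flag as ``the main obstacle.'' After applying $[i,j]$ with slack $u$, the denominator carries the factor $(1-ux_j/x_k)^b(1-x_j/x_k)^c$; once you differentiate in $u$ and set $u=1$, the pole at $x_j=x_k$ is a sum of terms of varying order, and there is no clean double-slack formula to compare against the other side. The substitution $(u,w)\leftrightarrow(1/u',u'w')$ matches the arguments of $E$ but does \emph{not} intertwine the operators $(-D_u)^{a-1}|_{u=1}$ and $(-D_w)^{\ell-1}|_{w=1}$ with their primed counterparts in any evident way, so the comparison remains open. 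Your fallback alternatives are likewise incomplete: the $a=b=c=1$ base case is fine, but neither ``expand $E$ monomial-by-monomial'' nor ``iteratively apply Corollary~\ref{cor-abc}'' is an actual inductive step---you never say what quantity decreases or how the identity for smaller parameters implies it for larger ones.

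The paper closes exactly this gap with two ingredients you are missing. First, it applies Corollary~\ref{cor-abc} \emph{at the outset} (not as an afterthought) to split $F=F_1+F_2$, where $F_1$ has denominator $(1-x_i/x_j)^{m_1}(1-x_j/x_k)^{m_2'}$ and $F_2$ has denominator $(1-x_j/x_k)^{m_2'}(1-x_i/x_k)^{m_3}$. Since $x_i=x_j$ is not a pole of $F_2$, both sides vanish on $F_2$, and for $F_1$ the factor $(1-x_j/x_k)^{m_2'}$ is independent of $u$, so it pulls cleanly through $D_u$---this is what makes $[j,k][i,j]F_1$ a tidy double-slack expression. Second, and more crucially, for $[i,k][j,i]F_1$ the pole at $x_i=x_k$ after the first step arises from $(1-x_i/(ux_k))^{m_2'}$ and genuinely depends on the slack $u$. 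The paper handles this via Lemma~\ref{lem-exchange} (equations \eqref{e-(1)} and \eqref{e-(2)}), which permit commuting $\CT_{x_i=x_k^*}$ past both $D_u$ and the evaluation $u=1$. You never invoke these switch rules, and without them the computation does not close.
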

\begin{proof}
Assume $F\in \A_n$ can be written as
\begin{equation}\label{e-struc-8}
F=\frac{E(x_i,x_j,x_k)}{(1-x_i/x_j)^{m_1}(1-x_j/x_k)^{m_2}(1-x_i/x_k)^{m_3}} ,
\end{equation}
where $x_i=x_j$, $x_j=x_k$ and $x_i=x_k$ are not poles of $E$.

By Corollary \ref{cor-abc}, we can write
\begin{equation}\label{e-struc-8-0}
  F=F_1+F_2=\frac{E_1(x_i,x_j,x_k)}{(1-x_i/x_j)^{m_1}(1-x_j/x_k)^{m'_2}} + \frac{E_2(x_i,x_j,x_k)}{(1-x_j/x_k)^{m'_2}(1-x_i/x_k)^{m_3}},
\end{equation}
where $E_1, E_2$ has no poles at $x_i=x_j$, $x_j=x_k$ and $x_i=x_k$.
Since $x_i=x_j$ is not a pole of $F_2$, we have $[i,j] F_2 = [j,i] F_2=0$ by the part (1) of Lemma~\ref{lem-struc-1}.
Consequently $[j,k][i,j] F_2 =-[i,k][j,i] F_2 =0$.

The lemma then holds by showing that $[j,k][i,j] F_1 =-[i,k][j,i] F_1$.
Applying the constant term operator $[i,j]$ to $F_1$ and using \eqref{e-struc-6-0} gives
\begin{equation}\label{e-F1-ij}
[i,j] F_1= \frac{(-D_u)^{m_1-1}}{(m_1-1)!} \frac{E_1(ux_j,x_j,x_k)}{u(1-x_j/x_k)^{m'_2}} \bigg|_{u=1} =\frac{1}{(1-x_j/x_k)^{m'_2}} \cdot \frac{(-D_u)^{m_1-1}}{(m_1-1)!} \frac{E_1(ux_j,x_j,x_k)}{u} \bigg|_{u=1}.
\end{equation}
Since $x_j=x_k$ is not a pole of $E_1(ux_j,x_j,x_k)$,
applying $[j,k]$ and using \eqref{e-struc-6-0} again give
\begin{align}\label{e-F1-ijjk}
[j,k][i,j]F_1&= \frac{(-D_v)^{m'_2-1}}{(m'_2-1)!}\frac{1}{v} \bigg(\frac{(-D_u)^{m_1-1}}{(m_1-1)!} \frac{E_1(uvx_k,vx_k,x_k)}{u} \bigg|_{u=1}\bigg) \bigg|_{v=1} \notag \\
&= \frac{(-D_u)^{m_1-1}}{(m_1-1)!} \frac{(-D_v)^{m'_2-1}}{(m'_2-1)!} \frac{E_1(uvx_k,vx_k,x_k)}{uv} \bigg|_{u=v=1}.
\end{align}

The situation of computing $[i,k][j,i]F_1$ is a little different. Applying $[j,i]$ to $F_1$
gives
\begin{align}\label{e-F1-ji}
[j,i] F_1 &= [j,i] \left(\frac{(-D_u)^{m_1-1}}{(m_1-1)!} \frac{E_1(x_i,x_j,x_k)}{(u-x_i/x_j)(1-x_j/x_k)^{m'_2}} \Big|_{u=1}\right) \notag \\
&= \left(\CT_{x_j=x_i^*}\frac{(-D_u)^{m_1-1}}{(m_1-1)!} \frac{E_1(x_i,x_j,x_k)}{(1-ux_j/x_i)(-x_i/x_j)(1-x_j/x_k)^{m'_2}}\right) \bigg|_{u=1}\notag & \text{by} \ \ \eqref{e-(2)}\\
&= \left(\frac{(-D_u)^{m_1-1}}{(m_1-1)!} \CT_{x_j=x_i^*} \frac{E_1(x_i,x_j,x_k)}{(1-ux_j/x_i)(-x_i/x_j)(1-x_j/x_k)^{m'_2}}\right) \bigg|_{u=1}\notag & \text{by} \ \ \eqref{e-(1)}\\
&= \frac{(-D_u)^{m_1-1}}{(m_1-1)!}\frac{E_1(x_i,x_i/u,x_k)}{-u\big(1-x_i/(u x_k)\big)^{m'_2}} \bigg|_{u=1}. & \text{by} \ \ \eqref{e-struc-6-0}
\end{align}
The factor $\big(1-x_i/(u x_k)\big)^{m'_2}$ in the denominator will contribute a pole $x_i=x_k$ after setting $u=1$.

By applying $[i,k]$ to both sides of \eqref{e-F1-ji}, we obtain
\begin{align*}
[i,k][j,i] F_1 &= [i,k]\left( \frac{(-D_u)^{m_1-1}}{(m_1-1)!}\frac{E_1(x_i,x_i/u,x_k)}{-u\big(1-x_i/(u x_k)\big)^{m'_2}} \Big|_{u=1}\right) \\
&= \left(\CT_{x_i=x_k^*}  \frac{(-D_u)^{m_1-1}}{(m_1-1)!}\frac{E_1(x_i,x_i/u,x_k)}{-u\big(1-x_i/(u x_k)\big)^{m'_2}} \right) \bigg|_{u=1}.
\end{align*}
Here the last equation holds by \eqref{e-(2)}.
Using \eqref{e-(1)} and \eqref{e-struc-6-0} again, we obtain
\begin{align}\label{e-F1-jiik}
[i,k] [j,i] F_1
&= \left(\frac{(-D_u)^{m_1-1}}{(m_1-1)!}\CT_{x_i=x_k^*}\frac{E_1(x_i,x_i/u,x_k)}{-u\big(1-x_i/(u x_k)\big)^{m'_2}}\right) \bigg|_{u=1} \notag \\
&= \left( \frac{(-D_u)^{m_1-1}}{(m_1-1)!}\left(\frac{(-D_v)^{m'_2-1}}{(m'_2-1)!} \frac{E_1(uvx_k,vx_k,x_k)}{-uv} \Big|_{v=1}\right)\right) \bigg|_{u=1} \notag \\
&= \frac{(-D_u)^{m_1-1}}{(m_1-1)!}\frac{(-D_v)^{m'_2-1}}{(m'_2-1)!} \frac{E_1(uvx_k,vx_k,x_k)}{-uv} \bigg|_{u=v=1}.
\end{align}
Comparing \eqref{e-F1-ijjk} with \eqref{e-F1-jiik} yields $[i,k][j,i]F_1 = -[j,k][i,j]F_1$, as desired.
\end{proof}

The third rule is the next V-Formula.
\begin{prop}[V-Formula]
For distinct $i,j,k\in \{1,2,\dots,n\}$ and $F\in \A_n$,
\[
[j,k][i,k] = [i,k][j,k] + [i,k][j,i], \quad i.e., \quad
\CT_{x_j=x_k}\CT_{x_i=x_k}F=\CT_{x_i=x_k}\CT_{x_j=x_k}F +\CT_{x_i=x_k}\CT_{x_j=x_i}F.
\]
\end{prop}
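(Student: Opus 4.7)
The proof will follow the same template as the Exchange Rule (Lemma~\ref{lem-Exchangerule}), with Corollary~\ref{cor-abc} used to split $F$ into two pieces on which different terms of the V-Formula vanish. Write $F$ as in \eqref{e-struc-8} and apply Corollary~\ref{cor-abc} to decompose $F=F_1+F_2$, where $F_1$ has no pole at $x_i=x_k$ and $F_2$ has no pole at $x_i=x_j$. By linearity it suffices to verify the identity separately on $F_1$ and $F_2$.

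On $F_2$: the term $[i,k][j,i]F_2$ vanishes by Lemma~\ref{lem-struc-1}(1) because $x_i=x_j$ is not a pole of $F_2$, so the identity reduces to $[j,k][i,k]F_2=[i,k][j,k]F_2$. Two applications of \eqref{e-struc-6-0} with independent slack variables $u$ (for the pole $x_i=x_k$, multiplicity $m_3$) and $v$ (for the pole $x_j=x_k$, multiplicity $m_2'$) collapse both orders to the same iterated differential $\frac{(-D_u)^{m_3-1}}{(m_3-1)!}\frac{(-D_v)^{m_2'-1}}{(m_2'-1)!}\frac{E_2(ux_k,vx_k,x_k)}{uv}\big|_{u=v=1}$, since $E_2$ is regular at $x_i=x_j$ and the two differentials act on disjoint slack variables.

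On $F_1$: the LHS vanishes because $[i,k]F_1=0$, so it remains to establish $[i,k][j,k]F_1+[i,k][j,i]F_1=0$. Both terms require the slack-variable machinery of Lemma~\ref{lem-exchange}, since the pole at $x_i=x_k$ is emergent — it only appears in the denominator after the inner operator's slack variable is set to $1$. Mirroring the Exchange Rule proof, applying \eqref{e-(2)} to pull $[i,k]$ past the substitution and then \eqref{e-(1)} to commute it past the differential reduces each term to a double differential in two slack variables evaluated at $1$. After renaming dummies, the two resulting expressions have the same integrand $E_1(wvx_k,vx_k,x_k)/(wv)$ up to a single overall sign, traceable to the conversion $(1-x_i/x_j)=-(x_i/x_j)(1-x_j/x_i)$ used to set up $[j,i]F_1$; thus the two terms cancel.

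The main obstacle is the bookkeeping in the $F_1$ computation: two slack variables of different multiplicities $m_1$ and $m_2'$ must flow through several applications of \eqref{e-(1)}--\eqref{e-(2)} and \eqref{e-struc-6-0}, and one must verify that after renaming the dummy variables, the resulting double-differential integrands for $[i,k][j,k]F_1$ and $[i,k][j,i]F_1$ match exactly and differ only in sign. Once this alignment is confirmed, combining the cancellation on $F_1$ with the commutativity on $F_2$ yields the V-Formula.
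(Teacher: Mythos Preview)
Your proposal is correct and follows essentially the same route as the paper: the same $F=F_1+F_2$ decomposition via Corollary~\ref{cor-abc}, the same vanishing observations ($[j,k][i,k]F_1=0$ and $[i,k][j,i]F_2=0$), and the same slack-variable computations using \eqref{e-(1)}, \eqref{e-(2)}, and \eqref{e-struc-6-0} to reduce both sides on each piece to matching iterated differentials. The paper even reuses the formula \eqref{e-F1-jiik} for $[i,k][j,i]F_1$ already obtained in the Exchange Rule proof, which is exactly the cancellation against $[i,k][j,k]F_1$ you describe.
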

\begin{proof}
Let $F, F_1, F_2$ be as in \eqref{e-struc-8} and \eqref{e-struc-8-0}. We can easily obtain $[j,k][i,k]F_1 =0$ and $[i,k][j,i]F_2 = 0$ by the part (1) of Lemma~\ref{lem-struc-1}. Meanwhile,
we have figured out that
\[
[i,k][j,i] F_1=\frac{(-D_u)^{m_1-1}}{(m_1-1)!}\frac{(-D_v)^{m'_2-1}}{(m'_2-1)!} \frac{E_1(uvx_k,vx_k,x_k)}{-uv} \Big|_{u=v=1}
\]
in the proof of Lemma~\ref{lem-Exchangerule}.
To prove the proposition, we compute $[i,k][j,k]F_1, [j,k][i,k]F_2$ and $[i,k][j,k]F_2$. For the sake of uniformity, we use $u$ as the slack variable when applying $[i,k]$ and use $v$ when applying $[j,k]$.

To get $[i,k][j,k]F_1$, by applying $[j,k]$ to $F_1$ and using \eqref{e-struc-6-0}, we obtain
\begin{equation}\label{e-struc-12}
[j,k]F_1= \frac{(-D_v)^{m'_2-1}}{(m'_2-1)!} \frac{E_1(x_i,vx_k,x_k)}{\big(1-x_i/(vx_k)\big)^{m_1}v} \Big|_{v=1}.
\end{equation}
It is not hard to find that $x_i=x_k$ is a pole of the right-hand side of \eqref{e-struc-12}
after setting $v=1$.
By applying $[i,k]$ to both sides of \eqref{e-struc-12}, we have
\begin{align}\label{e-F1-jkik}
 [i,k][j,k]F_1&= [i,k]\left(\frac{(-D_v)^{m'_2-1}}{(m'_2-1)!} \frac{E_1(x_i,vx_k,x_k)}{\big(1-x_i/(vx_k)\big)^{m_1}v}\right) \bigg|_{v=1} \notag \\
 &= \left(\CT_{x_i=x_k^*}\frac{(-D_v)^{m'_2-1}}{(m'_2-1)!} \frac{E_1(x_i,vx_k,x_k)}{\big(1-x_i/(vx_k)\big)^{m_1}v}\right) \bigg|_{v=1} \notag &\text{by} \ \ \eqref{e-(2)}\\
 &= \left(\frac{(-D_v)^{m'_2-1}}{(m'_2-1)!} \CT_{x_i=x_k^*}\frac{E_1(x_i,vx_k,x_k)}{\big(1-x_i/(vx_k)\big)^{m_1}v}\right) \bigg|_{v=1} \notag
 &\text{by} \ \ \eqref{e-(1)}\\
 &= \left(\frac{(-D_v)^{m'_2-1}}{(m'_2-1)!} \left(\frac{(-D_u)^{m_1-1}}{(m_1-1)!} \frac{E_1(uvx_k,vx_k,x_k)}{uv} \Big|_{u=1} \right) \right) \bigg|_{v=1} \notag &\text{by} \ \ \eqref{e-struc-6-0}\\
 &= \frac{(-D_u)^{m_1-1}}{(m_1-1)!}\frac{(-D_v)^{m'_2-1}}{(m'_2-1)!} \frac{E_1(uvx_k,vx_k,x_k)}{uv} \bigg|_{u=v=1}.
\end{align}
To obtain $[j,k][i,k]F_2$, by applying $[i,k]$ to $F_2$ and using \eqref{e-struc-6-0},
\[
[i,k]F_2= \frac{(-D_u)^{m_3-1}}{(m_3-1)!} \frac{E_2(ux_k,x_j,x_k)}{(1-x_j/x_k)^{m'_2}u} \bigg|_{u=1}= \frac{1}{(1-x_j/x_k)^{m'_2}}\cdot \frac{(-D_u)^{m_3-1}}{(m_3-1)!} \frac{E_2(ux_k,x_j,x_k)}{u} \bigg|_{u=1}.
\]
Since $x_j=x_k$ is not a pole of $\frac{(-D_u)^{m_3-1}}{(m_3-1)!} \frac{E_2(ux_k,x_j,x_k)}{u} \big|_{u=1}$, we can apply $[j,k]$ to the above equation and use \eqref{e-struc-6-0}.
This gives
\begin{align}\label{e-F2-ikjk}
 [j,k][i,k]F_2&= \frac{(-D_v)^{m'_2-1}}{(m'_2-1)!}\frac{1}{v}\bigg( \frac{(-D_u)^{m_3-1}}{(m_3-1)!} \frac{E_2(ux_k,vx_k,x_k)}{u} \Big|_{u=1} \bigg) \bigg|_{v=1} \notag \\
 &= \frac{(-D_v)^{m'_2-1}}{(m'_2-1)!}\frac{(-D_u)^{m_3-1}}{(m_3-1)!} \frac{E_2(ux_k,vx_k,x_k)}{uv} \bigg|_{v=u=1}.
\end{align}

Similar to $[j,k][i,k]F_2$, we can obtain
\begin{equation}\label{e-F2-jkik}
[i,k][j,k]F_2 = \frac{(-D_v)^{m'_2-1}}{(m'_2-1)!}\frac{(-D_u)^{m_3-1}}{(m_3-1)!} \frac{E_2(ux_k,vx_k,x_k)}{uv} \Big|_{v=u=1}.
\end{equation}

By \eqref{e-F1-jiik} and \eqref{e-F1-jkik}, we have $[i,k][j,k]F_1 + [i,k][j,i]F_1=0$.
Then
\[
[j,k][i,k]F_1 = [i,k][j,k]F_1 + [i,k][j,i]F_1
\]
since both sides vanish.
By \eqref{e-F2-ikjk} and \eqref{e-F2-jkik}, we have $[j,k][i,k]F_2 = [i,k][j,k]F_2$.
Then
\[
[j,k][i,k]F_2=[i,k][j,k]F_2+[i,k][j,i]F_2
\]
since $[i,k][j,i]F_2=0$.
Thus, the V-Formula holds.
\end{proof}

From the Exchange Rule and the V-Formula, we can obtain the next general exchange property of the operators in $\CTA$.
\begin{lem}\label{lem-genexchange}
For any monomial operator $L\in \CTA$ and distinct $i,j,k\in \{1,2,\dots,n\}$,
\begin{equation}\label{e-struc-11}
[j,k]L[i,j]=-[i,k](L|_{j=i})[j,i].
\end{equation}
\end{lem}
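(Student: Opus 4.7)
My plan is to prove \eqref{e-struc-11} by induction on $s = \deg(L)$. The base case $s = 0$, where $L = \id$, is precisely the Exchange Rule of Lemma~\ref{lem-Exchangerule}. For $s \geq 1$, I decompose $L = [p,q]L'$ by extracting the leftmost commutator so that $\deg(L') = s-1$, and reduce the identity to an application of the induction hypothesis on $L'$ combined with a manipulation on the outer factors.

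In the generic situation $\{p,q\} \cap \{i,j,k\} = \emptyset$, the Commutativity Rule (Lemma~\ref{lem-comm}) lets $[p,q]$ pass freely through both $[j,k]$ on the left and $[i,k]$ on the right, and the induction hypothesis applied to $[j,k]L'[i,j]$ finishes the argument; one uses $[p,q]|_{j=i} = [p,q]$ (since $j \notin \{p,q\}$) so that $[p,q](L'|_{j=i}) = L|_{j=i}$. The two substantive cases are $q = j$ and $q = k$ with $p \notin \{i,j,k\}$. When $q = j$, I rewrite $[j,k][p,j] = -[p,k][j,p]$ by the Exchange Rule, apply the induction hypothesis to $[j,p]L'[i,j] = -[i,p](L'|_{j=i})[j,i]$ (the lemma with $k$ replaced by $p$), and then apply the Exchange Rule a second time in the form $[p,k][i,p] = -[i,k][p,i]$ to land on $-[i,k][p,i](L'|_{j=i})[j,i] = -[i,k](L|_{j=i})[j,i]$, noting that $L|_{j=i} = [p,i](L'|_{j=i})$. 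When $q = k$, the V-Formula $[j,k][p,k] = [p,k][j,k] + [p,k][j,p]$ splits the left-hand side into two pieces, each of which I handle by the induction hypothesis (with $k$ replaced by $p$ in the second piece), and then the V-Formula in the reverse direction $[p,k][i,k] + [p,k][i,p] = [i,k][p,k]$ reassembles them into $-[i,k][p,k](L'|_{j=i})[j,i] = -[i,k](L|_{j=i})[j,i]$.

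The remaining configurations, where one of $p, q$ coincides with $j$ or $k$ in a way not covered above (for instance $p = j$, or $p = k$), all produce composite monomial operators on either side of \eqref{e-struc-11} that either contain two commutators with a common source index or place a vertex $a$ to the left of some $[a,\cdot]$; Corollary~\ref{cor-basic}(1) and Lemma~\ref{lem-struc-1}(2) then force both sides to vanish. The main obstacle is the bookkeeping: in each case one has to verify that the induction hypothesis is being applied to an expression of the exact shape $[b,c]L''[a,b]$ with distinct indices $a,b,c$, and that the substitution $L|_{j=i}$ factors compatibly through the leftmost decomposition $L = [p,q]L'$. A secondary but essential point is checking that the two sign flips produced by the successive Exchange Rule applications in the case $q = j$ compose correctly with the $-1$ from the induction hypothesis to match the explicit minus sign on the right-hand side of \eqref{e-struc-11}.
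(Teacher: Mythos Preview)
Your overall inductive strategy --- peel off the leftmost commutator $[p,q]$ of $L$ and reduce via Commutativity, Exchange, or the V-Formula according to how $\{p,q\}$ meets $\{i,j,k\}$ --- is a valid alternative to the paper's argument, which instead uses Commutativity once to push every commutator of the form $[\cdot,j]$ in $L$ next to $[i,j]$ and then inducts only on the length of that $j$-block via the V-Formula. Your treatment of the generic case and of the cases $q=j$, $q=k$ is correct.

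The gap is in your ``remaining configurations.'' You enumerate only the coincidences of $p$ or $q$ with $j$ or $k$, but not with $i$. The case $p=i$ is harmless (both sides then have $i$ as a repeated source). The case $q=i$ with $p\notin\{i,j,k\}$, however, is \emph{not} dismissed by your vanishing argument: the left side $[j,k][p,i]L'[i,j]$ does vanish (the index $i$ sits to the left of $[i,j]$, Lemma~\ref{lem-struc-1}(2)), but the right side $-[i,k][p,i](L'|_{j=i})[j,i]$ has no such obstruction. Concretely, with $(i,j,k,p)=(1,2,3,4)$ and $L'=\id$, the operator $-[1,3][4,1][2,1]$ sends $\big((1-x_4/x_1)(1-x_1/x_2)(1-x_2/x_3)\big)^{-1}$ to $1$, while $[2,3][4,1][1,2]$ annihilates it. So the identity, read with $L|_{j=i}$ as one-way substitution, is actually false when $L$ contains $i$. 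The paper escapes this via its opening sentence: the right side is the left side with $i$ and $j$ \emph{exchanged}, hence both vanish or neither does --- but that symmetry is only correct when $L$ is free of $i$ (so that substitution coincides with swapping), which is exactly the implicit hypothesis forced by assuming the left-hand monomial is nonzero. Once you add that hypothesis explicitly, the case $q=i$ cannot occur and your induction goes through.
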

\begin{proof}
The operator on the right-hand side (without the sign) is indeed obtained from that on the left-hand side by exchanging $i$ and $j$. Thus their corresponding digraphs are either both forests or both not forests. So we may assume that they are nonzero.

In general, assume
\[
L=L_{s+1}[i_s,j]L_s\cdots [i_1,j]L_1,
\]
where all the appearances of $j$ are displayed. Thus all the $L_h$'s
contain no $j$. Furthermore, no $i_h$ appears to the left of $[i_h,j]$, otherwise
this would make $L$ to be zero by the part (2) of Lemma~\ref{lem-struc-1}.
It follows that
\[
[j,k]L[i,j]=[j,k]L_{s+1}[i_s,j] L_s \cdots [i_1,j]L_1[i,j] = [j,k][i_s,j]\cdots [i_1,j][i,j]  L_{s+1} \cdots L_1
\]
by the Commutativity Rule (Lemma~\ref{lem-comm}).
Since the left-hand side of \eqref{e-struc-11} is nonzero, the $L_h$'s contain no $i$ also. Then by the same reasoning, we have
\[
[i,k](L|_{j=i})[j,i]=[i,k]L_{s+1}[i_s,i] L_s \cdots [i_1,i]L_1[j,i] = [i,k][i_s,i]\cdots [i_1,i][j,i]L_{s+1}\cdots L_1.
\]
We complete the proof by showing that
\begin{equation}\label{e-struc-10}
[j,k][i_s,j]\cdots [i_1,j][i,j]=-[i,k][i_s,i]\cdots [i_1,i][j,i].
\end{equation}

We prove \eqref{e-struc-10} by induction on $s$. The $s=0$ case is the Exchange Rule (Lemma~\ref{lem-Exchangerule}). Assume \eqref{e-struc-10} holds for $s-1$.
Denote the left-hand side of \eqref{e-struc-10} by $H$.
Using the V-Formula, we have
\[
[i_1,j][i,j]=[i,j][i_1,j]+[i,j][i_1,i].
\]
Substituting this into $H$ yields
\[
H=[j,k][i_s,j]\cdots [i_2,j][i,j][i_1,j]+[j,k][i_s,j]\cdots [i_2,j][i,j][i_1,i].
\]
By the induction hypothesis, we have
\[
[j,k][i_s,j]\cdots [i_2,j][i,j]=-[i,k][i_s,i]\cdots [i_2,i][j,i].
\]
Then
\[
H=-[i,k][i_s,i]\cdots [i_2,i]([j,i][i_1,j]+[j,i][i_1,i]).
\]
Using the V-Formula again gives
\[
H=-[i,k][i_s,i]\cdots [i_2,i][i_1,i][j,i],
\]
which is the right-hand side of \eqref{e-struc-10}.
\end{proof}

\subsection{The spanning elements of $\CTA$}

In this subsection, we find the spanning elements of $\CTA$.

Given an ordered digraph $D$, we call a monomial operator $L$ with $D(L)=D$  the realization of $D$ ({denoted by $L(D)$}).
By the part (2) of Corollary~\ref{cor-basic} and the Commutativity Rule, to obtain the realization of a given ordered digraph $D$, it suffices to assume
that $D$ is an ordered directed rooted tree $T$.
For a given $T$, we read out {$L(T)$} according to the direct edges of $T$ level by level from up to down and left to right. That is, we first read out the highest and left-most edge $[i_1,j_1]$, this is the left-most commutator in {$L(T)$}. Then, read out the next edge $[i_2,j_2]$ to the right and in the same level of $[i_1,j_1]$. Continue this until the right-most edge $[i_{s_1},j_{s_1}]$ in the highest level. Then read out the edges in the next level from left to right, say $[i_{s_1+1},j_{s_1+1}]\cdots [i_{s_1+s_2},j_{s_1+s_2}]$.
The reading ends at the lowest level and right-most edge $[i_s,j_s]$ of $T$.
Finally,
\[
{L(T)}=[i_1,j_1]\cdots[i_{s_1},j_{s_1}][i_{s_1+1},j_{s_1+1}]\cdots [i_{s_1+s_2},j_{s_1+s_2}] \cdots [i_s,j_s].
\]
For example, the realization of {Figure~\ref{fig-DL-ex}} is: $[6,2][1,2][7,2][4,2][5,6][3,7][8,7]$.
Having Corollary~\ref{cor-identify} and the discussion above, we can identify a monomial operator $L\in \CTA$ as an ordered directed rooted forest $D$, and vise versa.
In what follows, we always assume that a forest is ordered, directed and rooted, unless specified otherwise.

Note that the above way of reading is not unique. From the proof of Corollary~\ref{cor-identify}, we can give another way of reading according to subtrees (we omit the details).
But the realizations obtained by these two distinct ways of reading are equivalent by the Commutativity Rule.

\def\S{\mathbf{S}}
\def\r{\mathbf{r}}
\def\Par{\mathrm{Par}}
\def\compat{\;\; \mathrm{compat} \;\;}

To state our structural result, we need more notations.

Let $\S=\{S_1,S_2,\dots,S_\ell\}$ be a partition of $N=\{1,2,\dots,n\}$ with $\ell=|\S|$ parts. That is, $N$ is the disjoint union of the nonempty sets $S_i$ for $i=1,\dots, \ell$. For convenience, we arrange the $S_i$ increasingly according to their minimal elements. Hence in particular $1\in S_1$.
Denote by $\Par(N,\ell)$ the set of all partitions of $N$ with $\ell$ blocks, and let $\Par(N)=\cup_{\ell=1}^n \Par(N,\ell) $.
A tree $T$ is said to be on $S$ if the vertex set of $T$ is $S$.
A forest $D$ is said to be on $\S$ if its connected components are trees on $S_1$, \dots, $S_{\ell}$. It is clear that $D$ has exactly $n-|\S|$ edges. Note that $|S_i|=1$ gives the empty tree with the single-vertex set $S_i$.
Denote by $\mathcal{D}_{\S}$ the set of all rooted forests on $\S$. Let $\r=(r_1,\dots, r_\ell)$ be a sequence of integers. The sequence $\r$ is \emph{compatible} with $\S$ if $r_i\in S_i$ for every $i$. We use $\r \compat \S$ to represent this.
Let $\mathcal{D}_{\S,\r}$ be the set of forests on $\S$ with given roots $\r$. Denote by $\mathcal{T}_{S}$ the set of all rooted trees on $S\subseteq N$,
and let $\mathcal{T}_{S,r}$ be the set of all rooted trees on $S$ with given root $r$.
We remark that the $\S$ in $\mathcal{D}_{\S,\r}$ is a partition of $N$. Hence, if $D\in \mathcal{D}_{\S,\r}$, then the vertex set $V(D)=N$.
But for $T\in \mathcal{T}_{S}$, $V(T)=S$ may not be $N$. Isolated vertices are allowed in $D$, but not in $T$ (except for single-vertex graphs). This inconsistency does not lead to misunderstanding in this paper.

\def\Dinc{\mathcal{D}^{\textrm{inc.}}}
\def\Dninc{\mathcal{D}^{\textrm{n.inc.}}}
\def\Tinc{\mathcal{T}^{\textrm{inc.}}}
\def\Tninc{\mathcal{T}^{\textrm{n.inc.}}}

\def\Daug{\overline{\mathcal{D}}^{\textrm{inc.}}}
\def\Daugn{\overline{\mathcal{D}}^{\textrm{n.inc.}}}
\def\Taug{\overline{\mathcal{T}}^{\textrm{inc.}}}
\def\Taugn{\overline{\mathcal{T}}^{\textrm{n.inc.}}}

A directed edge $i\to j$ is \emph{increasing} if $i>j$.
A forest $D$ is \emph{increasing} if its directed edges are all increasing.
It is \emph{nearly increasing} if all the edges not end at the root are increasing. Let the superscript $^{\textrm{inc.}}$ and $^{\textrm{n.inc.}}$ stand for increasing and
nearly increasing respectively. For instance, $\Dninc_{\S,\r}$ is the set of all nearly increasing forests on $\S$ with given roots $\r$.
For a given integer $0\leq s\leq n-1$, we write
\[
\Dinc_{s}:=\bigcup_{\S \in \Par(N,n-s)}\Dinc_{\S}
\quad \text{ and } \quad
\Dninc_{s}:=\bigcup_{\S \in \Par(N,n-s)}\Dninc_{\S},
\]
where
\[
\Dinc_{\S}:=\bigcup_{\r \compat \S} \Dinc_{\S,\r}
\quad \text{ and } \quad
\Dninc_{\S}:=\bigcup_{\r \compat \S} \Dninc_{\S,\r}.
\]
If furthermore for each inner vertex $v$, the vertices directing to $v$ have increasing labels
from left to right, then we say $D$ is \emph{augmented increasing}. Similarly, we can define an \emph{augmented nearly increasing} forest.
Define $\Daugn_{\S,\r}$ to be the set of
all augmented nearly increasing forests on $\S$ with given roots $\r$, and similarly for $\Daug_{\S,\r}$. Then similarly we define
\[
\Daug_{s}:=\bigcup_{\S \in \Par(N,n-s)}\Daug_{\S} \quad \text{ and } \quad
\Daugn_{s}:= \bigcup_{\S \in \Par(N,n-s)}\Daugn_{\S},
\]
where
\[
\Daug_{\S}:=\bigcup_{\r \compat \S} \Daug_{\S,\r} \quad \text{ and } \quad
\Daugn_{\S}:= \bigcup_{\r \compat \S} \Daugn_{\S,\r}.
\]
In particular, $\Daug_{n-1}=\Daug_{\{N\}}$ and $\Daugn_{n-1}=\Daugn_{\{N\}}$.
We can change
forests $\mathcal{D}$ to trees
$\mathcal{T}$ in the above notations.
For instance,
$\Taug_{S}$ is the set of all augmented increasing (ordered directed rooted) trees on $S$ (hence has exactly $|S|-1$ edges).

The next result shows that any monomial operator $L$ realizing a forest can be converted to be (nearly) increasing.
\begin{prop}\label{prop-nearincreasing}
Let $\S$ be a partition of $N$. Then for any $D\in \mathcal{D}_{\S}$, there exists a $D'\in \Dninc_{\S}$ such that $L(D')=\pm L(D)$.
Furthermore, if $\S=\{N\}$, then for any tree $T\in \mathcal{T}_{N}$,
there exists an increasing tree $T^*\in \Tinc_{N}$ such that $L(T^*)=\pm L(T)$.
\end{prop}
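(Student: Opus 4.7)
Both parts are proved by successively applying the three basic algebraic moves of Section~3.1 — the Commutativity Rule (Lemma~\ref{lem-comm}), the leaf–parent transposition (Lemma~\ref{lem-genexchange}, which subsumes the Exchange Rule), and the top-edge flip (Lemma~\ref{lem-extrachange}) — each of which is a $\pm 1$-identity. We deliberately avoid the V-Formula, whose use would introduce sums and therefore prevent a conclusion of the form $L(D')=\pm L(D)$.

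For Part~1 (nearly-increasing forests), distinct tree components of $D\in\mathcal{D}_{\S}$ have disjoint vertex sets, so their operators commute by the Commutativity Rule and it suffices to deal with a single rooted tree $T$ on a block $S$ with root $r$. Lemma~\ref{lem-genexchange} allows us to swap the labels of any leaf and its (non-root) parent, at the price of one overall sign, while preserving the shape of the tree and the root $r$. A bubble-sort on labels — bubble the minimum label of each subtree below $r$ up to the apex of that subtree, then recurse inside each sub-subtree — realizes the nearly-increasing condition. The resulting tree $T' \in \Dninc_{\S}$ satisfies $L(T')=\pm L(T)$ with the same root $r$.

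For Part~2 (increasing trees, with $\S=\{N\}$), we argue by induction on $n=|N|$; the cases $n\leq 2$ reduce directly to Lemma~\ref{lem-extrachange}. For $n\geq 3$, begin by applying Part~1 to convert $T$ to a nearly-increasing tree with root $r$. If $r=1$, then the combination of the nearly-increasing property and $r=\min N$ makes every edge — including the root edges $c\to r$, which satisfy $c>1=r$ — increasing, and we are done. Otherwise the nearly-increasing property pins the global minimum $1$ to the apex of whichever subtree contains it; that is, $1$ is a direct child of $r$. We now flip the leftmost top edge $[c,r]$ via Lemma~\ref{lem-extrachange} to re-root the tree at $c$ (introducing one sign), reapply Part~1 to restore a nearly-increasing form with the new root, and iterate. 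A suitable inductive invariant — for example the lexicographic pair (depth of $1$ in the current tree, current root), or the number of edges separating the current root from $1$ after nearly-increasing is re-established — decreases at each iteration, ensuring termination with root equal to $1$ and the whole tree increasing.

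The principal difficulty is that Lemma~\ref{lem-extrachange} flips only the \emph{leftmost} top edge, since only there does the operator act on a two-variable rational function. When $1$ is not the leftmost child of the current root, one must first flip the ``wrong'' leftmost edge, then use Lemma~\ref{lem-genexchange} (and in particular the Exchange Rule, which is the special case with trivial middle) to reposition labels so that, after a further application of Part~1 if needed, the label $1$ migrates to the leftmost child of the next root — at which point a second flip via Lemma~\ref{lem-extrachange} completes the re-rooting. Verifying that every intermediate operator remains representable as $\pm L$ of a single ordered directed rooted forest — so that we never need the V-Formula — is the crux of the technical bookkeeping.
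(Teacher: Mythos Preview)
Your overall strategy matches the paper's --- avoid the V-Formula and use only the three sign-identities --- but two steps do not go through as written.

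\textbf{Part 1.} Lemma~\ref{lem-genexchange} does \emph{not} let you swap the labels of an arbitrary leaf $i$ with its parent $j$ while preserving the shape. The identity $[j,k]\,M\,[i,j]=-[i,k]\,(M|_{j=i})\,[j,i]$ requires $[i,j]$ to sit at the right end of the subword on which you act; any sibling $i'$ of $i$ with $[i',j]$ to the right of $[i,j]$ stays outside and remains a child of $j$, so after the move $j$ becomes a \emph{non-leaf} child of $i$ and the shape changes. Concretely, $[j,r][a,j][b,j]\mapsto -[a,r][j,a][b,j]$ turns a ``Y'' into a path, not the label-swapped ``Y'' (and $[j,r][a,j][b,j]\neq\pm[a,r][j,a][b,a]$, as the V-Formula shows). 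Your bubble-sort invariant, which presumes a fixed shape, therefore need not decrease. The paper sidesteps this by always attacking the \emph{rightmost} non-increasing non-root commutator $[i_\nu,j_\nu]$; Lemma~\ref{lem-genexchange} then applies with $[i_\nu,j_\nu]$ truly at the right end of the relevant subword, and the invariant ``number of increasing commutators counted from the right'' strictly increases regardless of any shape change.

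\textbf{Part 2.} Your proposed invariants collapse: once Part~1 is reapplied, vertex $1$ is automatically a direct child of the root (non-root edges are increasing and $1$ is minimal), so ``depth of $1$'' and ``distance from root to $1$'' are identically $1$ at every iteration. Simply flipping the leftmost top edge and re-normalising gives no demonstrable progress toward root $1$; the new root can even be larger than the old one. The paper's fix is precisely the move you gesture at in your final paragraph but do not pin down: after flipping $[u_l,r]\to[r,u_l]$, one applies Lemma~\ref{lem-genexchange} to the subword $[r,u_l][u_{l-1},r]\cdots[u_\mu,r]$, where $u_\mu$ is the \emph{rightmost} child of $r$ with $u_\mu<r$, obtaining $[u_\mu,u_l][u_{l-1},u_\mu]\cdots[u_{\mu+1},u_\mu][r,u_\mu]$. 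The commutator now at position $s+\mu$ from the right is $[r,u_\mu]$, which is increasing, so the same ``increasing-from-the-right'' count as in Part~1 goes up by at least one, and termination follows.
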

There may be a confusion in this proposition for the first-time reader. It is possible that different forests $D$ and $D'$
have equivalent realizations $L(D)=L(D')$ in $\CTA$. For example, $[2,4][3,2][1,3]=[1,4][2,1][3,1]$, see their corresponding digraphs below (Figure~\ref{fig-dig-eq-L}).
The equality can be obtained using the Exchange Rule twice. The digraph of the operator $[1,4][2,1][3,1]$ is nearly increasing.
\begin{figure}[htpb]
  \centering
  \includegraphics[width=4 cm]{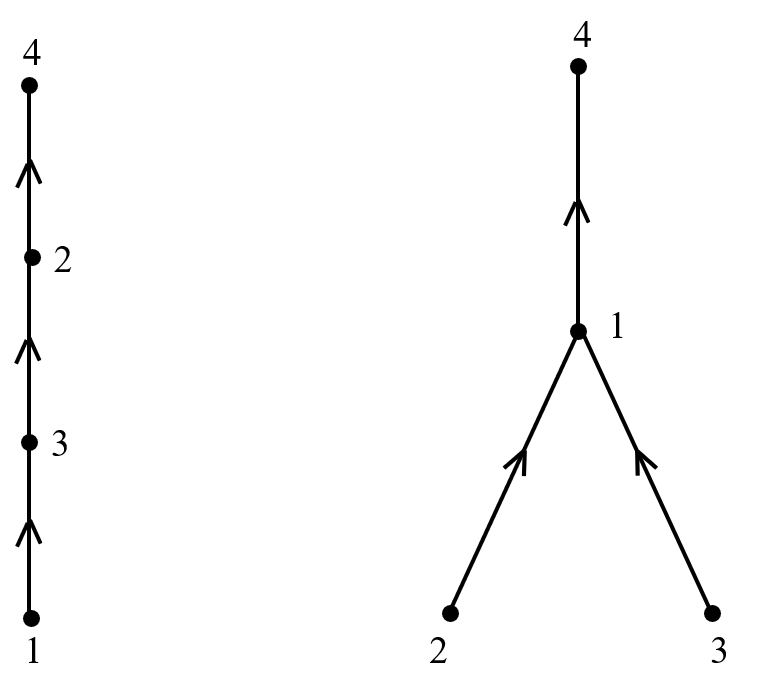}\\
  \caption{The digraphs of $[2,4][3,2][1,3]$ and $[1,4][2,1][3,1]$.}\label{fig-dig-eq-L}
\end{figure}

\begin{proof}
To prove the first part, we begin with the proof of the case when $D$ is a tree on $S$ with a root $r$.
Assume the realization of $D$ is
\[
L=L(D)=[i_{s+l},j_{s+l}]\cdots [i_{s+1},j_{s+1}][i_s,j_s]\cdots [i_1,j_1],
\]
where $i_{s+1},\dots,i_{s+l}$ are all the vertices directing to the root $r$ (so $j_{s+1}=\cdots=j_{s+l}=r$).
Define $\nu=\nu(L)$ to be the integer such that
$i_k>j_k$ for $k=1,2,\dots, \nu-1$ and $i_\nu<j_\nu$.
If $\nu\ge s+1$ then $L$ is nearly increasing.
We show that if $\nu\le s$ then we can find a monomial $L'$ such that
$L'=-L$ and $\nu(L')\ge \nu(L)+1$.

Since $j_{\nu}$ is not the root, there exists
an $h>\nu$ such that $[i_h,j_h]=[j_\nu, j_h]$ appears in $L$ (It is better to understand this by imaging the corresponding tree).
Notice that $j_h$ may be the root $r$ if $h>s$.
Denote by $\overline{L}= \cdots [i_\nu,j_\nu]$ the ``subword'' of $L$ up to $[i_\nu,j_\nu]$.
By Lemma~\ref{lem-genexchange},
there exists $\overline{L}'[j_\nu,i_\nu]=-\overline{L}$. Thus,
let $L'$ be obtained from $L$ by replacing $\overline{L}$ by
$\overline{L}'[j_\nu,i_\nu]$.
This operator $L'$ satisfies $L'=-L$ and $\nu(L')\ge \nu(L)+1$.
Such operation can continue until all the $[i_k,j_k]$ for $1\leq k\leq s$ are increasing,
i.e., all the $i_k>j_k$. Denote this operator by $L_1$. Then $D'=D(L_1)$ is nearly increasing and satisfies $L_1=\pm L$.
The statement that $D'$ is also on $S$ follows by the observation that all the operations we performed do not change the vertex set.

Suppose $D$ has several components, say $T_1,\dots, T_k$. Then by the Commutativity Rule
$L(D)=L(T_1)L(T_2)\cdots L(T_k)$, where $L(T_i)$ commutes with $L(T_j)$ for all $i,j$.
We have shown that for each $T_i$, there exists nearly increasing $T_i'$ such that $L(T_i')=\pm L(T_i)$.
The forest consisting of $T_1', T_2',\dots, T_k'$ is the desired $D'$.

To prove the second part, we use the result in the first part to obtain a nearly increasing $T'$ on $N$ with root $r$ such that $L(T')=\pm L(T)$.
By the Commutativity Rule, we may assume the realization of $T'$ is given by
\[
L_1=L(T')=[u_l,r][u_{l-1},r]\cdots [u_1,r][i_s,j_s]\cdots [i_1,j_1],
\]
where $i_m>j_m$ for $m=1,\dots,s$ and $s+l=n-1$.
Define $\mu=\mu(L_1)$ to be the integer such that $u_m>r$ for $m=1,\dots,\mu-1$,
but $u_{\mu}<r$. By Lemma~\ref{lem-extrachange}, we find that
\[
L_2=[r,u_l][u_{l-1},r]\cdots [u_{\mu},r][u_{\mu-1},r]\cdots [u_1,r][i_s,j_s]\cdots [i_1,j_1]=-L_1.
\]
Note that if $L_1\notin \CTA^{n-1}$ then we can not use Lemma~\ref{lem-extrachange} and the above equation does not hold in general.
Denote the subword $[r,u_l][u_{l-1},r]\cdots [u_{\mu},r]$ in $L_2$ by $L''$. By Lemma~\ref{lem-genexchange},
we find
\[
\overline{L}''=[u_{\mu},u_l][u_{l-1},u_{\mu}]\cdots [u_{\mu+1},u_{\mu}][r,u_{\mu}]=-L''.
\]
Let $L_3$ be obtained from $L_2$ by replacing $L''$ by $\overline{L}''$.
Then, $L_3$ satisfies $L_3=L_1$ and $\mu(L_3)\geq \mu(L_1)+1$.
Such operation can continue until we find an $L^*$ such that $D(L^*)$ is increasing and $L^*=\pm L$.
\end{proof}

We find that the realization of any nearly increasing tree can be written as a linear combination of
the realizations of augmented nearly increasing trees.
\begin{lem}\label{lem-treelinear}
Let $T^{(r)}\in \Tninc_{S,r}$  be a nearly increasing tree on $S$ with the root $r$.
Then, its realization $L(T^{(r)})$ can be written as a linear combination of the elements in
$B=\{L(T): T\in \Taugn_{S,r}\}$.
\end{lem}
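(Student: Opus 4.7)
The plan is a double induction, primarily on $|S|$ and secondarily on the number of inversions among the children of $r$ once all proper subtrees of $T^{(r)}$ have been made augmented. The base case $|S|=1$ is trivial, since then $\Tninc_{S,r}=\Taugn_{S,r}$ consists of a single isolated vertex.

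For the outer step, let the children of $r$ in $T^{(r)}$ from left to right be $c_1,\ldots,c_m$ with corresponding subtrees $T_{c_1},\ldots,T_{c_m}$. The Commutativity Rule (as in the proof of Corollary~\ref{cor-identify}) yields the decomposition
\[
L(T^{(r)})=[c_1,r][c_2,r]\cdots [c_m,r]\,L(T_{c_1})L(T_{c_2})\cdots L(T_{c_m}).
\]
Each $T_{c_i}$ is nearly increasing on a vertex set of size strictly less than $|S|$, so the outer inductive hypothesis rewrites $L(T_{c_i})$ as a linear combination of realizations of augmented nearly increasing trees in $\Taugn_{S_{c_i},c_i}$. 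Substituting and expanding reduces the problem to the case where every $T_{c_i}$ is itself augmented, so that only the children of $r$ may fail to be in increasing order.

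Under this assumption we proceed by an inner induction on the number $\nu$ of inversions in $(c_1,\ldots,c_m)$. If $\nu=0$, then $T^{(r)}$ already lies in $\Taugn_{S,r}$. Otherwise, pick an adjacent inversion $c_i>c_{i+1}$ and apply the V-Formula to the factor $[c_i,r][c_{i+1},r]$ of $L(T^{(r)})$:
\[
[c_i,r][c_{i+1},r]=[c_{i+1},r][c_i,r]+[c_{i+1},r][c_i,c_{i+1}].
\]
The first summand is the realization of the tree $T_1$ obtained from $T^{(r)}$ by transposing $c_i$ and $c_{i+1}$ together with their subtrees at the root; $T_1$ has all subtrees augmented and exactly $\nu-1$ root inversions, so the inner IH applies. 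The second summand, after using the Commutativity Rule to shift $[c_i,c_{i+1}]$ rightward past each $[c_j,r]$ and each $L(T_{c_j})$ with $j\notin\{i,i+1\}$, is the realization of the tree $T_2$ in which $c_i$, together with its entire subtree, is detached from $r$ and re-attached as the leftmost child of $c_{i+1}$. Since $c_i>c_{i+1}$, the new non-root edge $c_i\to c_{i+1}$ is increasing, so $T_2$ is nearly increasing.

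The tree $T_2$ has one fewer root child than $T^{(r)}$ and in particular strictly fewer than $\nu$ inversions at the root. Its subtree at $c_{i+1}$ need not be augmented (because $c_i$ has just been prepended as a new leftmost child), but this subtree has $|T_{c_i}|+|T_{c_{i+1}}|<|S|$ vertices, so the outer IH expresses its realization as a linear combination of realizations of augmented subtrees rooted at $c_{i+1}$. Substituting back, $L(T_2)$ becomes a linear combination of realizations of nearly increasing trees whose proper subtrees are all augmented and whose root-inversion counts are strictly less than $\nu$; the inner IH then finishes these terms. The main obstacle is the bookkeeping needed to verify that the two V-Formula summands really are the realizations of the ordered rooted trees $T_1$ and $T_2$ described, and in particular to check — using the Commutativity Rule and the paper's convention that the leftmost $[\,\cdot\,,c_{i+1}]$ operator corresponds to the leftmost child of $c_{i+1}$ — that $c_i$ indeed becomes the leftmost child of $c_{i+1}$ in $T_2$, so that both branches of the reduction strictly decrease $(|S|,\nu)$ in lexicographic order.
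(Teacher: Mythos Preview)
Your argument is correct and follows essentially the same double induction as the paper: an outer induction on $|S|$ to reduce to the case where all proper subtrees are augmented, and an inner induction on the number of inversions among the children of $r$, using the V-Formula at an adjacent inversion to split into a swapped tree $T_1$ and a grafted tree $T_2$. The only cosmetic differences are that the paper indexes the children as $u_k,\dots,u_1$ (left to right) and specifically chooses the \emph{right-most} adjacent descent, whereas you allow any adjacent inversion; both choices work for the same reason, since removing $c_i$ from the root-child sequence deletes at least the inversion $(c_i,c_{i+1})$. One point you leave implicit but which the paper states explicitly: because $T^{(r)}$ is nearly increasing, each $c_j$ is the minimum of its subtree, so the augmented \emph{nearly} increasing trees produced by the outer IH at $c_j$ (and likewise at $c_{i+1}$ after grafting) are in fact augmented \emph{increasing}; this is what guarantees the reassembled tree is again nearly increasing so that the inner IH applies.
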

\begin{proof}
We prove by induction on $|S|$. The base cases $|S|\le 2$ are trivial. Assume the lemma holds for $|S|<s$. We prove the lemma for the $|S|=s$ case.

Let $u_k,\dots,u_1$ be all the children of the root $r$ from left to right, and $T_{u_i}$ be the subtree of $T^{(r)}$ with the root $u_i$ for $i=1,\dots,k$.
Then we can write
\[
L(T)=[u_k,r]\cdots [u_1,r]L(T_{u_k})\cdots L(T_{u_1})=[u_k,r]L(T_{u_k})\cdots [u_1,r] L(T_{u_1}),
\]
where the last equality can be obtained by the Commutativity Rule or the subtree reading of a digraph.
Note that since $T^{(r)}$ is nearly increasing, every subtree $T_{u_i}$ is increasing and the root $u_i$ is the smallest vertex of $T_{u_i}$. By the induction hypothesis, for each $i\in \{1,\dots,k\}$
\[
L(T_{u_i})=\sum_{T'_j} c_j L(T'_j),
\]
where $c_j\in \mathbb{C}$, $T'_j$ ranges over all augmented nearly increasing trees with the root $u_i$ and the same vertex set of $T_{u_i}$. Since $u_i$ is the smallest vertex of $T_{u_i}$, all these $T'_j$ are in fact augmented increasing.
Thus we may assume
$T_{u_i}$ itself to be augmented increasing.

Next we prove by another induction on the inversion number of $T^{(r)}$ at the root $r$, defined by $\inv(T^{(r)};r)=\inv(u_ku_{k-1}\cdots u_1)$.
If $\inv(T^{(r)};r)=0$, then $T^{(r)}$ is augmented nearly increasing and $L(T^{(r)})$ is itself an element of $B$.
Assume the lemma holds
for $\inv(T^{(r)};r)<t$. We prove the lemma for $\inv(T^{(r)};r)=t$.

Let $b=u_i>u_{i-1}=a$ be the right-most adjacent decent of the sequence $u_ku_{k-1}\cdots u_1$.
That is, there does not exist a $j<i$ such that $u_j>u_{j-1}$.
Then we can write
\[
L(T^{(r)})=L_4 [b,r] L_3 [a,r] L_2 L_1,
\]
where $L_2=L(T_a)$ and $L_3=L(T_b)$ are the realizations of the subtrees of $T^{(r)}$ with roots $a$ and $b$ respectively. Notice that $L_3$ is free of $r$ and $a$, and hence commutes with $[a,r]$ by the Commutativity Rule.
Together with the V-Formula, we have
\[
L(T^{(r)})=L_4[b,r][a,r]L_3L_2L_1
=L_4[a,r][b,r]L_3L_2L_1+L_4[a,r][b,a]L_3L_2L_1.
\]
For a digraph $G$, if $L(G)$ can be written as a linear combination of the elements in $B$, we say that $G$ is \emph{representable}.
Denote by $L':=L_4[a,r][b,r]L_3L_2L_1$ and $L'':=L_4[a,r][b,a]L_3L_2L_1$. We will show that
$L'$ and $L''$ are both representable.

Since $L_2$ commutes with $L_3$, and $L_2=L(T_a)$ commutes with $[b,r]$, we can write
\[
L'=L_4 [a,r]L_2[b,r]L_3L_1.
\]
Thus, its corresponding digraph $D(L')$ is just obtained from $T^{(r)}$ by exchanging $T_a$ and $T_b$
(See Figure~\ref{fig-to-aug}), and hence has inversion number $t-1$. By the induction hypothesis on $\inv(T^{(r)};r)$,
$L'$ is representable.

For $L''$, we can see that
its corresponding digraph $D'':=D(L'')$ is obtained from $T^{(r)}$ by removing $b\to r$ and adding
$b\to a$ as the left-most child of $a$, with $T_b$ attached to $b$ (See Figure~\ref{fig-to-aug}). Since $T_b$ and $T_a$ are increasing,
the subtree $T_a(L'')$ of $D(L'')$ with the root $a$ is increasing, but need not be augmented increasing. By the induction hypothesis on $|S|$,
the realization of $T_a(L'')$ can be written as
\[
L(T_a(L''))= [b,a]L_3L_2 = \sum_T c(T) L(T),
\]
where $T$ ranges over all augmented nearly increasing rooted trees with the root $a$ and the same vertex set of $T_a(L'')$.
Since $a$ is the smallest vertex of $T_a(L'')$, these $T$'s are in fact augmented increasing.
Thus we may assume $T_a(L'')$ itself to be augmented increasing. Now $D''$ has a smaller $\inv(D'';r)$. By the induction hypothesis on $\inv(T^{(r)};r)$, $L''$ is representable. This completes the proof.
\begin{figure}[htpb]
  \centering
  \includegraphics[width=13 cm]{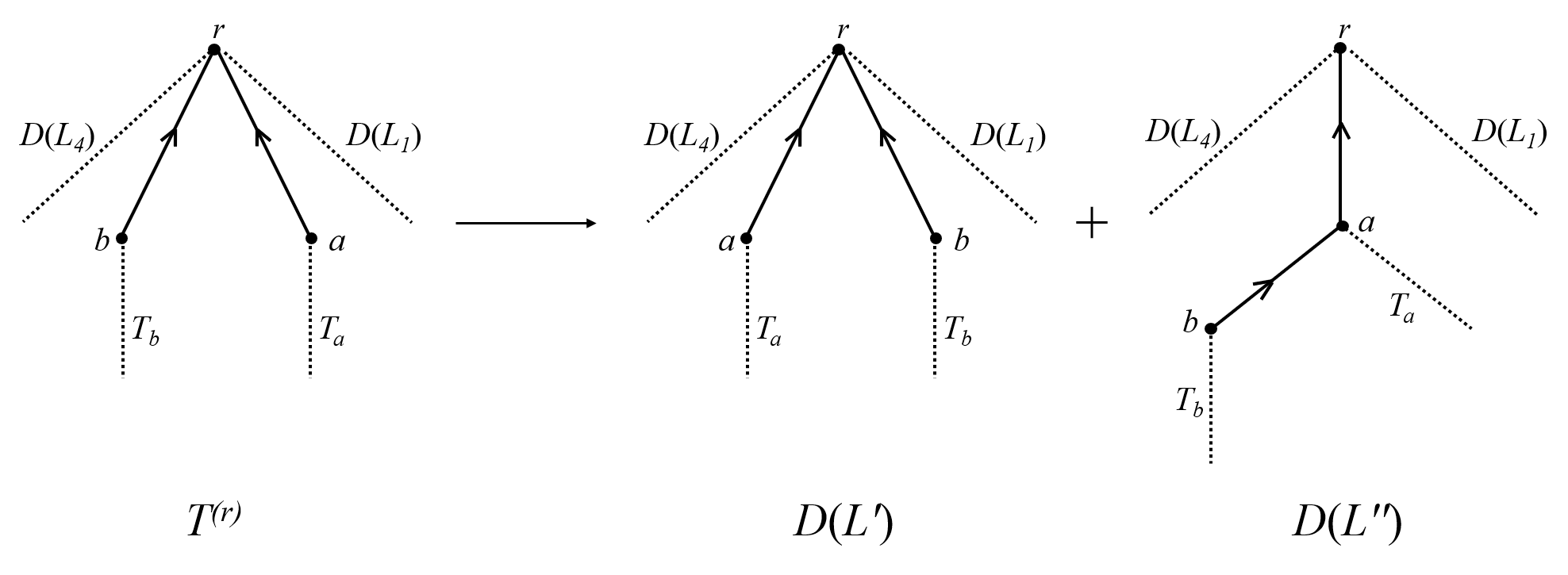}\\
  \caption{The illustration of obtaining $D(L')$ and $D(L'')$ from $T^{(r)}$.}\label{fig-to-aug}
\end{figure}
\end{proof}

By Proposition~\ref{prop-nearincreasing} and Lemma~\ref{lem-treelinear},
we obtain the spanning elements of $\CTA^s$.
\begin{prop}\label{prop-span}
For $1\le s\le n-2$, the subspace $\CTA^{s}$ is spanned by $B^s:=\{L(D): D\in \Daugn_{s}\}.$
The subspace $\CTA^{n-1}$ is spanned by $B^{n-1}:=\{L(D): D\in \Taug_{n-1}\}.$
\end{prop}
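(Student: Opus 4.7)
The plan is to combine Proposition~\ref{prop-nearincreasing}, Lemma~\ref{lem-treelinear}, and the Commutativity Rule (Lemma~\ref{lem-comm}). By Corollary~\ref{cor-basic}, $\CTA^{s}$ is spanned by the realizations $L(D)$ of nonzero monomial operators $L$ of degree $s$, where $D=D(L)$ is a rooted forest with exactly $s$ edges; after adjoining any missing vertices as isolated roots, $D\in\mathcal{D}_{\S}$ for some $\S\in\Par(N,n-s)$. It therefore suffices to express every such $L(D)$ as a linear combination of the claimed basis elements.

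I would first pass to a nearly increasing form using Proposition~\ref{prop-nearincreasing}, obtaining $D'\in\Dninc_{\S}$ with $L(D')=\pm L(D)$. Decomposing $D'$ into its connected components $T_1,\dots,T_\ell$, each a nearly increasing rooted tree on the block $S_i$ with root $r_i\in S_i$, the Commutativity Rule gives $L(D')=L(T_1)\cdots L(T_\ell)$, with the factors commuting freely because their vertex sets are disjoint.

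The second step is to upgrade each component to augmented nearly increasing form. Lemma~\ref{lem-treelinear} supplies, for each $i$, an expansion $L(T_i)=\sum_{j}c_{ij}L(T_{ij})$ with $T_{ij}\in\Taugn_{S_i,r_i}$. Multiplying these expansions and using commutativity once more to identify a product $L(T_{1j_1})\cdots L(T_{\ell j_\ell})$ with $L(D'')$, where $D''$ is the forest obtained as the disjoint union of $T_{1j_1},\dots,T_{\ell j_\ell}$, yields
\[
L(D')=\sum_{(j_1,\dots,j_\ell)}\Bigl(\prod_i c_{ij_i}\Bigr)L(D''),\qquad D''\in\Daugn_{\S,\r}\subseteq\Daugn_{s},
\]
which settles the case $1\le s\le n-2$.

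For the top-degree case $s=n-1$, Proposition~\ref{prop-nearincreasing} actually yields a \emph{fully} increasing tree $T^{*}\in\Tinc_{N}$ with $L(T^{*})=\pm L(D)$; since every edge is increasing, the root is forced to be the minimum vertex $1$, so $T^{*}\in\Tninc_{N,1}$. Lemma~\ref{lem-treelinear} then expands $L(T^{*})$ as a linear combination of $L(T)$ with $T\in\Taugn_{N,1}$, and the key observation is that any augmented nearly increasing tree rooted at $1$ is automatically augmented increasing, because every root-edge has the form $v\to 1$ with $v>1$. Hence $T\in\Taug_{n-1}$, completing the proof. The heavy lifting has already been absorbed by Proposition~\ref{prop-nearincreasing} and Lemma~\ref{lem-treelinear}; what remains is only to distribute linear combinations across components correctly and to observe the ``free upgrade'' from nearly increasing to increasing when the root is forced to be $1$. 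If anything is delicate, it is verifying that the rooted-forest structure (in particular the chosen root of each block) is preserved under the reductions of both cited results, but this is transparent from their proofs.
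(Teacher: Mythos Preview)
Your proposal is correct and follows essentially the same route as the paper's proof: reduce via Proposition~\ref{prop-nearincreasing}, factor componentwise by the Commutativity Rule, and apply Lemma~\ref{lem-treelinear} to each component. Your handling of the $s=n-1$ case is actually more explicit than the paper's terse ``by Lemma~\ref{lem-treelinear} and the similar argument as above'', since you spell out why the root of an increasing tree on $N$ is forced to be $1$ and hence why the output of Lemma~\ref{lem-treelinear} lands in $\Taug_{n-1}$ rather than merely in $\Taugn_{N,1}$.
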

\begin{proof}
We show that any monomial operator $L=[i_{s},j_{s}]\cdots [i_1,j_1]$ can be written as a linear combination of the elements of $B^s$.

Let $D=D(L)$ be the corresponding digraph of $L$. By Corollary~\ref{cor-basic} and Proposition~\ref{prop-nearincreasing}, we may assume that $D$ is a nearly increasing
(ordered directed and rooted) forest for $1\le s\le n-2$, otherwise $L=0$.
Suppose $D$ has $k=n-s$ connected components, say $T_{r_1},\dots,T_{r_k}$,
where $r_i$ is the root of the tree $T_{r_i}$ for $i=1,\dots,k$.
Then, we can write
\[
L= L(T_{r_1})\cdots L(T_{r_k}).
\]
Here the $L(T_{r_i})$'s  commute with each other by the Commutativity Rule.
By Lemma~\ref{lem-treelinear}, for each $i$,
\[
L(T_{r_i}) = \sum_{T^{(r_i)}}  c(T^{(r_i)}) L(T^{(r_i)}),
\]
where $T^{(r_i)}$ and $T^{(i)}$ in the next equation range over all augmented nearly increasing trees with root $r_i$ and the same vertex set of $T_{r_i}$ for each $i$. It follows that $L$ can be written in the form
\[
L = \sum_{T^{(1)},\dots,T^{(k)}} c(T^{(1)})\cdots c(T^{(k)}) L(T^{(1)})\cdots L(T^{(k)}).
\]
This shows the first assertion.

For $s=n-1$, we may assume $D=D(L)$ to be increasing by Proposition~\ref{prop-nearincreasing}. The proposition then follows by Lemma~\ref{lem-treelinear} and the similar argument as above.
\end{proof}

\subsection{The basis of $\CTA$}
In the previous subsection, we find the spanning elements of $\CTA^s$ for each $s$.
In this subsection, we show that these spanning elements are linearly independent, and hence indeed form a basis of $\CTA^s$.

We find a simple result that can be used to construct an orthogonality of the monomial operators of $\CTA$.
The following rational function plays an important role in our construction. For an ordered directed rooted forest $D$, define
\begin{equation}\label{e-E-D}
  \epsilon(D)= \prod_{i\to j\in E(D)} \frac{1}{1-x_i/x_j}.
\end{equation}
We have the next basic lemma.
\begin{lem}\label{lem-s=1}
Let $D$ be an ordered directed rooted forest
and $P$ be a Laurent polynomial. Then
\begin{align}\label{e-ij-D}
  \CT_{x_i=x_j} P \epsilon(D)=\left\{
                              \begin{array}{ll}
                               P|_{x_i=x_j} \epsilon(D/{i\to j}), & \mbox{ if } i\to j \in E(D); \\
                -P|_{x_i=x_j}\epsilon(D/{i\to j}), & \mbox{ if } j\to i \in E(D); \\                                0, & \mbox{ otherwise.}
                              \end{array}
                            \right.
\end{align}
\end{lem}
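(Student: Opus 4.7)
The plan is to view $P\epsilon(D)$ as a rational function of $x_i$ with other variables as parameters, and apply the simple-pole formula \eqref{e-struc-1} from Lemma~\ref{l-A10}. Since $D$ is a rooted forest, every vertex has out-degree at most $1$. Consequently, the factor $\frac{1}{1-x_i/x_j}$ appears in $\epsilon(D)$ if and only if $i\to j\in E(D)$, and the factor $\frac{1}{1-x_j/x_i}$ appears if and only if $j\to i\in E(D)$. Moreover, since $D$ is acyclic, at most one of these two edges is present, so the pole of $P\epsilon(D)$ at $x_i=x_j$ (when it exists) is simple.

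For the ``otherwise'' case, when neither $i\to j$ nor $j\to i$ lies in $E(D)$, no factor of $\epsilon(D)$ produces a pole at $x_i=x_j$; hence $\CT_{x_i=x_j}P\epsilon(D)=0$ by part~(1) of Lemma~\ref{lem-struc-1}.

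For the case $i\to j\in E(D)$, I would multiply $P\epsilon(D)$ by $(1-x_i/x_j)$ to cancel the offending factor, then substitute $x_i=x_j$ as in \eqref{e-struc-1}. This yields $P|_{x_i=x_j}$ times $\prod_{l\to m\in E(D),\,l\to m\neq i\to j}\frac{1}{1-x_l/x_m}\bigl|_{x_i=x_j}$. The identification with $\epsilon(D/{i\to j})$ is made by tracking how the remaining edges incident to $i$ behave under contraction: each in-edge $k\to i$ contributes $\frac{1}{1-x_k/x_i}$, which becomes $\frac{1}{1-x_k/x_j}$, matching the edge $k\to j$ in $D/{i\to j}$; edges not incident to $i$ are unchanged. (Since $D$ is acyclic, no in-edge $j\to i$ exists in this case, so the substitution is finite.) For the case $j\to i\in E(D)$, the factor contributing the pole is $\frac{1}{1-x_j/x_i}$, which is not in the standard form of \eqref{e-intro-3}. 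I would first rewrite
\[
\frac{1}{1-x_j/x_i}=\frac{-x_i/x_j}{1-x_i/x_j},
\]
absorbing $-x_i/x_j$ into the Laurent-polynomial numerator $L_0(x_i)$. Applying \eqref{e-struc-1} produces the value $-x_j/x_j=-1$ from this numerator factor, which is the source of the sign in the second case. Substituting $x_i=x_j$ in the remaining factors, and again matching each one against the corresponding edge of $D/{i\to j}$ (where edges $k\to i$ with $k\neq j$ become $k\to j$, edges $i\to m$ become $j\to m$, and the contracted edge $j\to i$ disappears as a self-loop), identifies the result with $-P|_{x_i=x_j}\epsilon(D/{i\to j})$.

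The only real obstacle is the last bookkeeping step: checking, edge by edge, that substituting $x_i=x_j$ into the product over edges of $D$ other than the contracted one matches exactly the product defining $\epsilon(D/{i\to j})$. This requires distinguishing the one possible out-edge from $i$ and the possibly several in-edges to $i$, and seeing how each transforms under the identification. In particular, one needs the observation that $\frac{-x_j/x_k}{1-x_j/x_k}=\frac{1}{1-x_k/x_j}$ so that factors of the form $\frac{1}{1-x_k/x_i}$ convert cleanly after rewriting to standard form and substituting.
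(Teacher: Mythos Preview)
Your proposal is correct and follows essentially the same route as the paper: both use the simple-pole residue (the paper computes directly, you cite \eqref{e-struc-1}), both obtain the sign in the $j\to i$ case from the rewrite $\frac{1}{1-x_j/x_i}=\frac{-x_i/x_j}{1-x_i/x_j}$, and both dispose of the ``otherwise'' case via Lemma~\ref{lem-struc-1}(1). Your edge-by-edge matching of $\epsilon(D')\big|_{x_i=x_j}$ with $\epsilon(D/{i\to j})$ is more explicit than the paper's one-line assertion, and the standard-form worry in your last paragraph, while technically valid, is not a genuine obstacle since the substitution $x_i=x_j$ is finite on every remaining factor.
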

\begin{proof}
If $i\to j$ is an edge of $D$, then $\epsilon(D)=\frac{1}{1-x_i/x_j}\epsilon(D')$, where $D'$ is obtained from $D$ by removing the edge $i\to j$.
By direct computation we have
$$\CT_{x_i=x_j} P\epsilon(D)=\CT_{x_i=x_j}\frac{1}{1-x_i/x_j} P\epsilon(D')=P\epsilon(D')\Big|_{x_i=x_j},$$
which is just $P\big|_{x_i=x_j}\epsilon(D/{i\to j})$.

When $j\to i$ is an edge of $D$, similar computation yields
$$\CT_{x_i=x_j} P\epsilon(D)=\CT_{x_i=x_j}\frac{1}{1-x_j/x_i}P\epsilon(D')
=\CT_{x_i=x_j}\frac{-x_i/x_j}{1-x_i/x_j}P\epsilon(D')
=-P\epsilon(D')\Big|_{x_i=x_j},$$
which is $-P|_{x_i=x_j}\epsilon(D/{i\to j})$.

Finally, if neither $i\to j$ nor $j\to i$ is an edge of $D$, then
$x_i=x_j$ is not a pole of $\epsilon(D)$. Thus $\CT\limits_{x_i=x_j} P\epsilon(D) =0$, as desired.
\end{proof}

Repeatedly use of Lemma~\ref{lem-s=1} gives the next result.
\begin{lem}\label{lem-struct-2}
Let $S,S'$ be subsets of $\{1,\dots,n\}$ and $r\in S\cap S'$.
Suppose $T\in \mathcal{T}_{S,r}$ is a tree on $S$ with the root $r$, $T'\in \mathcal{T}_{S'}$
and $P$ is a Laurent polynomial. Then one of the following cases holds:
\begin{enumerate}
\item $L(T) P \epsilon(T')=0$. A sufficient condition is
    $S \not \subseteq S'$.

\item $L(T) P \epsilon(T')$ is of the form $\pm P' \epsilon(T'')$.
Here $P'$ is a Laurent polynomial obtained from $P$ by substituting $x_i$ by $x_r$ for every $i\in S$, and $T''$ is a tree on $(S'\setminus S)\cup \{r\}$ and is obtained from $T'$ through certain contractions.
In particular, $\epsilon(T'')=1$ if $S=S'$.
\end{enumerate}
\end{lem}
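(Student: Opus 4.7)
The plan is to induct on $|S|$ (equivalently, on the number of edges of $T$), peeling off one leaf of $T$ at a time. The main engine is Lemma~\ref{lem-s=1}, which ensures that a single commutator $[i,j]$ acting on an expression of the form ``Laurent polynomial times $\epsilon$ of a forest'' either annihilates it or returns another expression of the same shape with one edge of the forest contracted. Iterating this will track exactly which vertices of $T'$ get identified and which edges of $T'$ get contracted as $L(T)$ is applied.

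For the base case $|S|=1$, the tree $T$ is just the root $r$, so $L(T)=\id$ and case~(2) holds trivially with $P'=P$ and $T''=T'$; note $(S'\setminus S)\cup\{r\}=S'$. For the inductive step, let $[v,w]$ be the rightmost commutator in the realization $L(T)$; by the level-by-level reading convention, $v\to w$ is the bottommost-rightmost edge of $T$, so $v$ is a leaf of $T$ with parent $w\in S$. Then $L(T)=L(T_1)\cdot[v,w]$, where $T_1=T\setminus\{v\}$ is a rooted tree on $S_1:=S\setminus\{v\}$ with root $r$, and its realization is exactly $L(T)$ with the last commutator dropped.

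Apply $[v,w]$ to $P\epsilon(T')$ using Lemma~\ref{lem-s=1}. If $v$ is not adjacent to $w$ in $T'$ (in particular if $v\notin S'$), then $[v,w]P\epsilon(T')=0$ and case~(1) holds. Otherwise
\[
[v,w]P\epsilon(T')=\pm P|_{x_v=x_w}\,\epsilon(\tilde T'),
\]
where $\tilde T'=T'/{v\to w}$ is a rooted tree on $\tilde{S'}:=S'\setminus\{v\}$. Apply the inductive hypothesis to $L(T_1)$ acting on this expression, using $r\in S_1\cap\tilde{S'}$. The result is either $0$ (case~(1)) or $\pm P''\epsilon(T'')$, with $P''$ obtained from $\pm P|_{x_v=x_w}$ by $x_i\mapsto x_r$ for $i\in S_1$ and $T''$ a tree on $(\tilde{S'}\setminus S_1)\cup\{r\}=(S'\setminus S)\cup\{r\}$ produced by further contractions of $\tilde T'$, hence of $T'$. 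Because $w\in S_1$, the substitutions compose to $x_v\mapsto x_w\mapsto x_r$ and $x_i\mapsto x_r$ for the remaining $i\in S$, which is exactly the $P'$ of case~(2). For the sufficient condition $S\not\subseteq S'$: if $v\in S\setminus S'$ the first step already gives $0$; otherwise $v\in S\cap S'$ and $S_1\setminus\tilde{S'}=S\setminus S'\neq\emptyset$, so the induction yields $0$. When $S=S'$, one has $(S'\setminus S)\cup\{r\}=\{r\}$ and $\epsilon(T'')=1$.

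The main obstacle is justifying the decomposition $L(T)=L(T_1)\cdot[v,w]$. The Commutativity Rule (Lemma~\ref{lem-comm}) alone does not allow one to slide $[v,w]$ past other commutators sharing the vertex $w$, so the identity is not automatic for an arbitrary leaf $v$. The cleanest fix is to choose $v$ so that $[v,w]$ is already the rightmost commutator in the standard realization of $L(T)$, i.e., the bottommost-rightmost edge of the ordered tree $T$; with that choice the decomposition is immediate from the reading convention, and the induction carries through.
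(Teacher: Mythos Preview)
Your proof is correct and follows essentially the same approach as the paper's: both arguments repeatedly apply Lemma~\ref{lem-s=1} to strip off the rightmost commutator of $L(T)$, either annihilating the expression or contracting one edge of $T'$ at each step. The paper phrases this as a direct iteration (defining $D_{k+1}=D_k/{i_k\to j_k}$ and tracking the successive substitutions), while you package the same recursion as an induction on $|S|$; your concern about the decomposition $L(T)=L(T_1)\cdot[v,w]$ is in fact automatic, since for any realization $[i_s,j_s]\cdots[i_1,j_1]$ of a tree the rightmost $i_1$ is necessarily a leaf (it cannot appear to its own left by Lemma~\ref{lem-struc-1}(2)).
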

\begin{proof}
Suppose $L=L(T)=[i_s,j_s]\cdots [i_1,j_1]$ is the realization of $T$.
Let $D_1=T'$ and recursively define $D_{k+1}=D_{k}/{i_k\to j_k}$ for $k=1,2,\dots, s$ if $i_k\to j_k$ or $j_k\to i_k$ is an edge of $D_k$. Observe that $D_{k}$ is a tree on $S'\setminus \{i_1,\dots, i_{k-1}\}$, and $D_{k+1}$ is not defined if $i_k\not\in S'$.
By Lemma~\ref{lem-s=1}, if $D_{k+1}$ is not defined for some $k$, this implies $[i_k,j_k]\cdots [i_1,j_1] P \epsilon(T')=0$; otherwise we can obtain $D_{s+1}$
and $S\subseteq S'$. Moreover,
we know that $D_{s+1}$ is a tree on $(S'\setminus S) \cup \{r\}$ and $L P\epsilon(T')$ is of the form $\pm P' \epsilon(T'')$. Here $T''=D_{s+1}$
and $P'$ is obtained from $P$ by successive replacements $x_{i_1}=x_{j_1}$, \dots, $x_{i_s}=x_{j_s}$. Since $T$ is a tree on $S$, the $x_i$ for all $i\in S$ of $P$
are replaced by $x_r$.
\end{proof}

Let $\S_1,\S_2$ be two partitions of $\{1,2,\dots,n\}$. We say that $\S_1$ is a \emph{refinement} of $\S_2$ if each block of $\S_1$ is contained in one of the blocks of $\S_2$.
It is natural to generalize Lemma~\ref{lem-struct-2} to forests. Then, we have the next three consequences.
\begin{cor}\label{cor-ortho-1}
Suppose $P$ is a Laurent polynomial, and $D_1, D_2$ are two ordered directed rooted forests on $\S_1$ and $\S_2$ respectively.
If $L(D_1) P\epsilon(D_2)\neq 0$, then $\S_1$ must be a refinement of $\S_2$. As a consequence, $|E(D_2)|\geq |E(D_1)|$.
Moreover, $L(D_1) P\epsilon(D_2)$ is of the form $\pm P'\epsilon(D_3)$. Here $D_3$ is a forest satisfying
$|E(D_3)|=|E(D_2)|-|E(D_1)|$,
and $P'$ is obtained from $P$ by replacing each $x_i$ by its corresponding $x_r$,
where $r$ is the root of the component of $D_1$ containing $i$.
\end{cor}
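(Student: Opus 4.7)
The plan is to iterate Lemma~\ref{lem-s=1} along the edges of $D_1$. Writing $L(D_1)=[i_s,j_s]\cdots[i_1,j_1]$ with $s=|E(D_1)|$, I would set $F_0=D_2$, $Q_0=P$ and recursively define $F_k=F_{k-1}/{i_k\to j_k}$ and $Q_k=Q_{k-1}|_{x_{i_k}=x_{j_k}}$ whenever Lemma~\ref{lem-s=1} applies, i.e., whenever $i_k\to j_k$ or $j_k\to i_k$ is an edge of $F_{k-1}$. Each step contributes a $\pm$ sign, and the overall output is $\pm Q_s\,\epsilon(F_s)$; if at some step the requisite edge is absent, the whole expression vanishes. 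The level-by-level reading convention defining $L(D_1)$ places each parent edge to the left of its children's edges, so the right-to-left processing ensures that when $[i_k,j_k]$ is applied, $i_k$ has not yet been absorbed by any earlier contraction and $j_k$ is still a vertex of $F_{k-1}$ (its own parent edge, if any, being processed later).

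For the refinement claim, the key observation is that contracting an edge inside a forest preserves the partition of vertices into connected components. Hence the vertices of each $F_{k-1}$ carry the same connectivity as in $D_2$: two labels lie in the same component of $F_{k-1}$ precisely when they lie in the same component of $D_2$. If the entire iteration is nonzero, every $(i_k,j_k)$ is an edge of $F_{k-1}$, so $i_k$ and $j_k$ share a block of $\S_2$. Because each component of $D_1$ is connected via its own edges, transitivity forces each block of $\S_1$ to be contained in a single block of $\S_2$, which establishes the refinement assertion.

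The remaining assertions then follow. Each nonzero contraction strips exactly one edge from the current forest, so setting $D_3=F_s$ gives $|E(D_3)|=|E(D_2)|-|E(D_1)|$; nonnegativity of $|E(D_3)|$ in turn yields $|E(D_2)|\geq |E(D_1)|$. For the polynomial factor, composing the substitutions $x_{i_k}\mapsto x_{j_k}$ along the edges of $D_1$ (in right-to-left order, i.e., working leaf-to-root inside each tree component) maps each $x_v$ to $x_r$, where $r$ is the root of the component of $D_1$ containing $v$; this matches the description of $P'$.

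The main delicacy is the bookkeeping for the iteration: one must verify that the reading convention genuinely makes each successive contraction well-defined, namely that the label $i_k$ is present in $F_{k-1}$ as the source of an edge into $j_k$ (or out of it). Once this is in place, the combinatorial content of the corollary is carried entirely by the invariance of connected components under edge contractions, which is the elementary but essential ingredient.
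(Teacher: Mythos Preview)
Your proposal is correct and follows essentially the same approach as the paper. The paper does not spell out a proof of this corollary but simply says it is the natural generalization of Lemma~\ref{lem-struct-2} to forests; since Lemma~\ref{lem-struct-2} itself is proved by iterating Lemma~\ref{lem-s=1} edge by edge along $L(T)$, your argument is exactly that generalization written out, with the added (correct) observation that edge contractions preserve the block structure of $\S_2$, which yields the refinement claim.
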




\begin{cor}\label{cor-ge}
If $D_1$ and $D_2$ are two forests with $|E(D_2)|< |E(D_1)|$, then $L(D_1) \epsilon(D_2)=0$.
\end{cor}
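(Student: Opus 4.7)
The plan is to derive this as an immediate contrapositive of Corollary~\ref{cor-ortho-1}. Specializing Corollary~\ref{cor-ortho-1} to the case where the Laurent polynomial $P$ is the constant $1$, that corollary says: if $L(D_1)\epsilon(D_2) \neq 0$, then $\S_1$ must be a refinement of $\S_2$, and as a numerical consequence we must have $|E(D_2)| \geq |E(D_1)|$.

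Therefore, assuming $|E(D_2)| < |E(D_1)|$ as the hypothesis of the corollary, the conclusion of Corollary~\ref{cor-ortho-1} cannot hold, so necessarily $L(D_1)\epsilon(D_2) = 0$. Thus the proof is essentially a one-line invocation.

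The only step worth spelling out is why the edge-count inequality in Corollary~\ref{cor-ortho-1} holds in our setting: a rooted forest on a partition $\S$ of $N=\{1,\dots,n\}$ has exactly $n - |\S|$ edges, so $|E(D_i)| = n - |\S_i|$ for $i=1,2$, and the refinement condition $\S_1 \preceq \S_2$ forces $|\S_1| \geq |\S_2|$, hence $|E(D_1)| \leq |E(D_2)|$. Contrapositively, the strict inequality $|E(D_2)| < |E(D_1)|$ rules out the refinement relation, so $L(D_1)\epsilon(D_2) = 0$ by Corollary~\ref{cor-ortho-1}. There is no real obstacle here; the work has already been done in Corollary~\ref{cor-ortho-1}, and this corollary merely isolates the numerical consequence that will be most convenient for later use (presumably in establishing linear independence of the spanning set $B^s$ from Proposition~\ref{prop-span} via an orthogonality-type pairing).
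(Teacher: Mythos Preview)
Your proof is correct and matches the paper's approach exactly: the paper states Corollary~\ref{cor-ge} without proof as one of three immediate consequences of Corollary~\ref{cor-ortho-1} (the forest generalization of Lemma~\ref{lem-struct-2}), and the edge-count inequality you invoke is already stated explicitly there. Your added explanation of why refinement forces $|E(D_1)|\le |E(D_2)|$ via $|E(D_i)|=n-|\S_i|$ is precisely the reasoning behind the clause ``As a consequence, $|E(D_2)|\geq |E(D_1)|$'' in Corollary~\ref{cor-ortho-1}.
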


\begin{cor}\label{cor-eq}
If two forests $D_1$ and $D_2$ have the same number of edges, then $L(D_1) \epsilon(D_2)\in \{-1,0,1\}$. In particular, $L(D_1) \epsilon(D_1)=1$.
\end{cor}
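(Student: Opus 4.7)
The strategy is to reduce both claims to direct consequences of Corollary~\ref{cor-ortho-1} and Lemma~\ref{lem-s=1}.

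For the first claim, I would apply Corollary~\ref{cor-ortho-1} with $P = 1$. If the product $L(D_1)\epsilon(D_2)$ is nonzero, the corollary produces $L(D_1)\epsilon(D_2) = \pm \epsilon(D_3)$ for some forest $D_3$ with $|E(D_3)| = |E(D_2)| - |E(D_1)|$. Under the hypothesis $|E(D_1)| = |E(D_2)|$, this forces $|E(D_3)| = 0$, so $D_3$ is edgeless and $\epsilon(D_3) = 1$ (an empty product). Combined with the vanishing case, this yields $L(D_1)\epsilon(D_2) \in \{-1,0,1\}$.

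For the ``in particular'' claim $L(D_1)\epsilon(D_1) = 1$, the plan is to apply the commutators of $L(D_1)$ to $\epsilon(D_1)$ one at a time, from right to left, and to verify that each application contributes a factor of $+1$ via Lemma~\ref{lem-s=1}. The reading convention lists commutators level by level starting at the top of each tree of $D_1$ and working downward, so under composition (which acts right-to-left) the first commutator to be applied corresponds to a deepest-level edge $i \to j$ whose tail $i$ is necessarily a leaf of $D_1$. By the first case of Lemma~\ref{lem-s=1}, $[i,j]\epsilon(D_1) = \epsilon(D_1/{i \to j})$ with a $+$ sign. Because contracting a leaf edge only removes the processed edge and leaves all remaining edges (with their orientations) intact, the next commutator still matches a genuine edge of the current forest in the correct direction.

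Iterating, every step strips off a leaf edge of the current forest and contributes $+1$; after all $|E(D_1)|$ contractions only isolated root vertices remain, with $\epsilon$-value equal to the empty product $1$. The one bookkeeping point to verify is that the reading order really does present edges to the operators in a bottom-up, leaf-first manner, so that every commutator $[i,j]$ sees $i \to j$ (and not $j \to i$, nor neither) in the current forest at the moment it is processed. This follows from the level-by-level reading together with the Commutativity Rule (Lemma~\ref{lem-comm}), which lets operators within a single level be permuted freely since they act on disjoint variables. I do not anticipate any serious obstacle beyond this bookkeeping.
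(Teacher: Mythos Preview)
Your proposal is correct and matches the paper's approach: the paper states this corollary without proof as a direct consequence of Corollary~\ref{cor-ortho-1} (for the first claim) and of the iterated use of Lemma~\ref{lem-s=1} (for the second), and you have filled in exactly those details. One small remark: your appeal to the Commutativity Rule is unnecessary---and the justification ``disjoint variables'' is slightly off, since two same-level commutators may share the head $j$---because the level-by-level reading alone already guarantees that when $[i,j]$ is applied, all edges out of the deeper levels (in particular all children of $i$) have been contracted, so $i$ is a leaf of the current forest and the edge $i\to j$ persists in the correct orientation.
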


Recall that $B^s=\{L(D): D\in \Daugn_{s}\}$ for $s=1,\dots,n-2$ and
$B^{n-1}=\{L(D): D\in \Taug_{n-1}\}$.
We find that for a fixed $s$ the elements of $B^s$ admit the next kind of orthogonality.
\begin{lem}\label{lem-orthogonality-2}
For a fixed $s\in \{1,\dots,n-1\}$, let $D_i,D_j$ be two distinct forests of $\Daugn_{s}$ (trees of $\Taug_{n-1}$ if $s=n-1$)
with the same roots. Then
$L(D_i): \A_n(D_j) \mapsto \{0\}$. Consequently,
\begin{equation}\label{e-orthogonality-n-2}
L(D_i) \epsilon(D_j)=\delta_{i,j},
\end{equation}
where $\epsilon(D_j)$ is defined as in \eqref{e-E-D} and $\delta_{i,j}$ is the Kronecker delta.
\end{lem}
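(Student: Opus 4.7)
The plan is to prove the stronger statement $L(D_i)\A_n(D_j)=\{0\}$ whenever $D_i\neq D_j$; together with $L(D)\epsilon(D)=1$ from Corollary~\ref{cor-eq} and the membership $\epsilon(D_j)\in \A_n(D_j)$, this immediately yields $L(D_i)\epsilon(D_j)=\delta_{i,j}$. I argue by induction on the common edge count $s=|E(D_i)|=|E(D_j)|$ (these are equal because the two forests have the same number $|\r|$ of components). The base $s=0$ is vacuous since the only edgeless forest with prescribed roots $\r$ is the isolated-vertex forest.

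For the inductive step, let $v=\max(\{1,\dots,n\}\setminus \r)$ be the largest non-root vertex. Any child of $v$ in a nearly increasing forest would be strictly larger and non-root, contradicting maximality, so $v$ is a leaf in both $D_i$ and $D_j$; write $p=p_{D_i}(v)$ and $q=p_{D_j}(v)$ for the respective parents.

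The key technical claim is the factorization $L(D_i)=L(D_i-v)\cdot [v,p]$, where $D_i-v$ denotes the augmented nearly increasing forest obtained from $D_i$ by deleting the leaf $v$ (so $v$ becomes an additional singleton root). Indeed, $v$ occurs in $L(D_i)$ only within the single commutator $[v,p]$, as it has no children. In the subtree reading of the component of $D_i$ containing $v$, the commutator $[v,p]$ is the last among the $[\,\cdot\,,p]$-commutators (because $v$ is the largest child of $p$, hence rightmost in the augmented order) and is immediately followed by the empty $L(T_v)$. Anything still to the right of $[v,p]$ in $L(D_i)$ lies in a right-sibling subtree of some ancestor of $p$ or in a different component; none of those involves $v$ or $p$, so the Commutativity Rule (Lemma~\ref{lem-comm}) permits moving $[v,p]$ all the way to the right, and what remains is the subtree reading of $D_i-v$.

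The conclusion follows by case analysis on the parents. If $p\neq q$, then $(v,p)\notin E(D_j)$ (the only $v$-edge of $D_j$ is $(v,q)$), so part~(1) of Lemma~\ref{lem-struc-1} gives $[v,p]F=0$ for every $F\in \A_n(D_j)$ and hence $L(D_i)F=L(D_i-v)\cdot 0=0$. If $p=q$, then $D_i-v\neq D_j-v$ (otherwise the two original forests would agree on every edge), and since $v$ is a leaf of $D_j$ one has $D_j/(v\to p)=D_j-v$, so part~(2) of Lemma~\ref{lem-struc-1} places $[v,p]F\in \A_n(D_j-v)$. Applying the inductive hypothesis to the pair $(D_i-v,D_j-v)$---both augmented nearly increasing forests with common root set $\r\cup\{v\}$ and $s-1$ edges---yields $L(D_i-v)\A_n(D_j-v)=\{0\}$, so $L(D_i)F=L(D_i-v)[v,p]F=0$, completing the argument. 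The principal delicacy is the factorization step: one must carefully check that no commutator with $p$ in the $j$-position survives to the right of $[v,p]$ after the commutations, which is ensured by Lemma~\ref{lem-struc-1}(2) (forcing $[p,p_{D_i}(p)]$ to lie to the left of $[v,p]$ whenever $p\notin \r$).
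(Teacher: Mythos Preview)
Your proof is correct and follows essentially the same inductive strategy as the paper: peel off a leaf, factor $L(D_i)$ as $L(\text{smaller forest})\cdot[\text{leaf},\text{parent}]$, and split into cases according to whether that edge lies in $D_j$. Your choice of leaf---the maximum non-root vertex, which is automatically a leaf in \emph{both} forests---is slightly slicker than the paper's (which picks a rightmost leaf of $D_i$ only and must then separately handle the possibility that it is not a leaf of $D_j$, as well as rule out the reversed edge), but the underlying argument is the same.
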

\begin{proof}
We prove by induction on $n$, the number of variables. The lemma clearly holds for
$n\le 2$. Assume the lemma holds for $n-1$ and less.

Let a leaf $u$ of $D_i$ be the right-most child of an inner vertex $v$. Since $D_i$ is an augmented nearly increasing forest, $u$ is also the largest child of $v$, and $u>v$ if $v$ is not a root. Then we can write $L(D_i)=L(D_i/{u\to v})[u,v]$.

If $u\to v\not\in E(D_j)$, then by the assumption that $D_i$ and $D_j$ have the same roots, $u$ is not a root of $D_j$. We then discuss two cases: i) if $v$ is not a root of $D_i$, then $u>v$ so that $v\to u$ is not increasing. Together with that neither $u$ and $v$ is a root of $D_j$ gives $v\to u\notin E(D_j)$; ii) if $v$ is a root of $D_i$ and $D_j$, then
$v\to u$ cannot be an edge of $D_j$. In both cases,
$v\to u\notin E(D_j)$.
Thus $[u,v]F=0$ for any $F\in \A_n(D_j)$.
It follows that $L(D_i)(F)=L(D_i/{u\to v})[u,v](F)=0$.

On the other hand, assume that $u\to v$ is an edge of $D_j$.
By the part (2) of Lemma~\ref{lem-struc-1}, we have
$[u,v]F\in \A_n(D_j/{u \to v})$ for $F\in \A_n(D_j)$.
Then, if $D_i/{u \to v}\neq D_j/{u \to v}$, we can get
\[
L(D_i)(F)=L(D_i/{u\to v})[u,v](F)=L(D_i/{u\to v})(F')=0.
\]
Here $F'\in \A_n(D_j/{u \to v})$ and the last equality holds by the induction hypothesis.
Hence, the lemma follows by showing that $D_i/{u\to v}\neq D_j/{u\to v}$.

We then consider two cases (recall that $u$ is not a root of $D_i$ and $D_j$):
i) If $u$ is not a leaf of $D_j$, then it has at least one child, say $u'>u$ since $D_j$ is nearly increasing. It follows that $u'$ becomes a child of $v$ in $D_j/{u\to v}$, which can not be a child of $v$ in $D_i/{u\to v}$ by our choice of $u$.
ii) If $u$ is a leaf in $D_j$, we assume to the contrary that $D_i/{u\to v}=D_j/{u\to v}$.
Then $u$ is also the largest child of $v$ in $D_j$. It follows that $D_i= D_j$, a contradiction.

Equation~\eqref{e-orthogonality-n-2} follows from the proof above and Corollary~\ref{cor-eq}.
\end{proof}

We remark that the condition that $D_i$ and $D_j$ have the same roots cannot be dropped:
consider the case when $D_i$ and $D_j$ are both one-edge graphs consisting of $2\to 3$
and $3\to 2$ respectively.

Recall that $\Par(N,\ell)$ is the set of all partitions of $N=\{1,\dots,n\}$ with $\ell$ blocks.
\def\spann{\texttt{span}}
For $s=0,1,\dots,n-2$, let $\S \in \Par(N,n-s)$ and the integer sequence $\r$ be compatible with $\S$.
Denote by $B_{\S}:=\{L(D): D\in \Daugn_{\S}\}$ and $B_{\S,\r}:=\{L(D): D\in \Daugn_{\S,\r}\}$.
We have the next necessary and sufficient condition for the judgement of the zero operator in the spanning space of $B_{\S,\r}$, and obtain a basis of it.
\begin{cor}\label{cor-zero-operator}
For any $\L\in \spann (B_{\S,\r})$, $\L$ is the zero operator in $\CTA$
if and only if $\L \epsilon(D)=0$ for all $D\in \Daugn_{\S,\r}$.
The elements of $B_{\S,\r}$ form a basis of $ \spann (B_{\S,\r})$.
\end{cor}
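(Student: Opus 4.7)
The plan is to use the orthogonality relation $L(D_i)\epsilon(D_j)=\delta_{i,j}$ supplied by Lemma~\ref{lem-orthogonality-2} (together with the diagonal normalization from Corollary~\ref{cor-eq}) as a duality pairing between the spanning set $B_{\S,\r}$ and the family of test functions $\{\epsilon(D):D\in \Daugn_{\S,\r}\}\subset \A_n$. Every forest in $\Daugn_{\S,\r}$ has the same roots $\r$, so the hypotheses of Lemma~\ref{lem-orthogonality-2} are satisfied for any pair within this set. Both assertions will then drop out of this pairing simultaneously.

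First I would dispatch the easy direction of the ``if and only if''. If $\L=0$ as an operator on $\A_n$, then $\L\epsilon(D)=0$ for every $D\in \Daugn_{\S,\r}$, once one checks that $\epsilon(D)\in \A_n$ for any ordered directed rooted forest $D$ on $N$; the only subtlety is an edge $i\to j$ with $i<j$ ending at a root, which is handled by rewriting $\tfrac{1}{1-x_i/x_j}=\tfrac{-x_j/x_i}{1-x_j/x_i}$, an admissible form since $\A_n$ permits a homogeneous Laurent polynomial of degree $0$ as the numerator. For the other direction I would write any $\L\in \spann(B_{\S,\r})$ as $\L=\sum_{D_i\in \Daugn_{\S,\r}} c_{D_i}L(D_i)$ and apply it to $\epsilon(D_j)$ for an arbitrary $D_j\in \Daugn_{\S,\r}$. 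The orthogonality collapses the sum to a single term, giving $\L\epsilon(D_j)=c_{D_j}$, so the hypothesis $\L\epsilon(D)=0$ for all $D\in \Daugn_{\S,\r}$ forces every coefficient $c_{D_j}$ to vanish, whence $\L=0$.

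The basis statement is an immediate consequence of the same calculation: a vanishing combination $\sum c_D L(D)=0$, evaluated at $\epsilon(D_j)$, yields $c_{D_j}=0$ for every $D_j$, so $B_{\S,\r}$ is linearly independent, and it spans $\spann(B_{\S,\r})$ by definition, hence forms a basis. I do not anticipate any substantial obstacle: all of the technical work has already been absorbed into Lemma~\ref{lem-orthogonality-2}, and what remains is the elementary principle that a spanning set paired with a dual family of test elements must be a basis, with the zero operator detected exactly by annihilation of every test element.
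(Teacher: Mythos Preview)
Your proposal is correct and follows essentially the same route as the paper: write $\L=\sum_{D} c_D L(D)$, evaluate at $\epsilon(D')$, and use the orthogonality of Lemma~\ref{lem-orthogonality-2} to read off $c_{D'}=0$; the basis claim is then the special case $\L=0$. Your extra remark that $\epsilon(D)\in\A_n$ even when a root edge has $i<j$ is a harmless clarification the paper leaves implicit.
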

\begin{proof} We only prove the sufficiency of the first claim, the necessity is trivial.

Since $\L\in \spann (B_{\S,\r})$, we can write
\begin{equation}\label{e-zero-operator}
\L:= \sum_{D \in \Daugn_{\S,\r}} c_D L(D)
\end{equation}
for some complex numbers $c_D$. It suffices to show that $c_{D'}=0$ for each particular $D' \in \Daugn_{\S,\r}$.
Applying both sides of \eqref{e-zero-operator} to
$\epsilon(D')$ and using Lemma~\ref{lem-orthogonality-2} gives
$\L \epsilon(D')=c_{D'}=0$.

Assume
\[
\sum_{D \in \Daugn_{\S,\r}} c_D L(D)=0.
\]
From the proof above, we can obtain every $c_D=0$.
Then the elements of $B_{\S,\r}$ are linearly independent and hence form a basis of
$\spann (B_{\S,\r})$.
\end{proof}

From the space $\spann(B_{\S,\r})$, we can construct $\CTA$ through direct sum. In other words,
the algebra $\CTA$ is a direct sum of the spanning spaces $\spann(B_{\S,\r})$ for certain $\S$ and $\r$.
We obtain this step by step in the next three decomposition lemmas.
\begin{lem}\label{lem-struct-S}
For fixed $s \in \{0,\dots, n-2\}$ and $\S \in \Par(N,n-s)$,
\begin{equation}\label{e-direct}
\spann(B_{\S})= \bigoplus_{\r \compat \S} \spann \big(B_{\S,\r}\big).
\end{equation}
\end{lem}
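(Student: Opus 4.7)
Since $B_\S=\bigsqcup_{\r\compat\S}B_{\S,\r}$ as a disjoint union of forest sets (each forest has a unique root sequence), the identity $\spann(B_\S)=\sum_{\r\compat\S}\spann(B_{\S,\r})$ is automatic, so only the directness of the sum needs proof. Assume $\sum_{\r\compat\S}\L_\r=0$ in $\CTA$ with $\L_\r\in\spann(B_{\S,\r})$; my goal is to show every $\L_\r=0$. By Corollary~\ref{cor-zero-operator}, it suffices to establish $\L_{\r_0}\epsilon(D_0)=0$ for every $\r_0$ and every $D_0\in\Daugn_{\S,\r_0}$, and I plan to obtain this by testing the operator identity $\sum_\r\L_\r=0$ against a carefully chosen family of rational functions in~$\A_n$.

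The key structural observation is that each constituent $[i,j]$ eliminates the variable $x_i$, so for any $D\in\Daugn_{\S,\r}$ with $\r=(r_1,\dots,r_\ell)$ and any $F\in\A_n$, the image $L(D)F$—and hence $\L_\r F$—depends only on the root variables $x_{r_1},\dots,x_{r_\ell}$. I therefore test the identity against $F=\epsilon(D_*)$ for trees $D_*$ on the full vertex set $N$ (partition $\{N\}$): by Corollary~\ref{cor-ortho-1}, each nonzero $L(D)\epsilon(D_*)$ is of the form $\pm\epsilon(T_D)$ for a tree $T_D$ on~$\r$, whose denominator is a product of factors $1-x_{r_a}/x_{r_b}$. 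Consequently each $\L_\r\epsilon(D_*)$ is a rational function whose possible pole loci are $\{x_a=x_b\}$ with both $a,b\in\{r_1,\dots,r_\ell\}$, and in particular lives in the ring $\CC(x_{r_1},\dots,x_{r_\ell})$.

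The separation then proceeds by matching poles in $\CC(x_1,\dots,x_n)$. For each candidate pole locus $\{x_a=x_b\}$, only those $\r$'s with $\{a,b\}\subseteq\{r_1,\dots,r_\ell\}$ can contribute, so matching residues of the identity $\sum_\r\L_\r\epsilon(D_*)=0$ tightly constrains the pole behavior of each $\L_\r\epsilon(D_*)$. By varying $D_*$ over a sufficiently rich family of trees on $N$, iterating the residue-matching argument, and using the linear independence of $\{\epsilon(T):T\in\mathcal{T}_\r\}$ for a fixed root set $\r$ (which follows from Corollary~\ref{cor-eq} and the within-root orthogonality of Lemma~\ref{lem-orthogonality-2}), I extract $\L_\r\epsilon(D_*)=0$ for each $\r$, and a final appeal to Corollary~\ref{cor-zero-operator} converts the vanishing of $\L_{\r_0}\epsilon(D_0)$ for every $D_0\in\Daugn_{\S,\r_0}$ into the operator identity $\L_{\r_0}=0$.

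The main obstacle will be the residue-matching step when $\S$ has several blocks of size greater than one: in that case a single pole locus $\{x_a=x_b\}$ can appear in $\L_\r\epsilon(D_*)$ for more than one $\r$ simultaneously (for instance $\r=(1,3,5)$ and $\r=(1,3,6)$ both contain the pair $\{1,3\}$), so no single test tree~$D_*$ isolates a given~$\r$ by inspection of residues alone. The plan to circumvent this is a careful combinatorial description of the contraction $L(D)\epsilon(D_*)=\pm\epsilon(T_D)$—which identifies $T_D$ explicitly from the block-by-block contractions imposed by the components of $D$ on~$D_*$—combined with input from several test trees $D_*$, so that the resulting system of residue equations can be solved uniquely and the problem reduces to the within-root situation already handled by Lemma~\ref{lem-orthogonality-2} and Corollary~\ref{cor-zero-operator}.
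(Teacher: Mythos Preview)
Your proposal has a genuine gap: the residue-matching step that you yourself flag as the ``main obstacle'' is never carried out. You correctly observe that distinct root sequences $\r$ can share a pole locus $\{x_a=x_b\}$, so no single test tree $D_*$ isolates a given $\r$; your remedy is to use ``several test trees'' and solve ``the resulting system of residue equations,'' but you neither specify which trees nor show that the system is invertible. As written this is a strategy outline, not a proof, and the hard part is precisely the part left undone.

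The paper sidesteps this difficulty with a much simpler test function. Instead of applying the operators to $\epsilon(D_*)$ for a tree $D_*$ on all of $N$ (which, as you note, produces genuine rational functions after contraction), it applies them to $P\cdot\epsilon(D')$ where $D'\in\Daugn_{\S,\r'}$ is a forest with the \emph{same} number of edges as the elements of $B_\S$, and $P$ is a monomial. By Corollary~\ref{cor-eq} each $\L_\r\epsilon(D')$ is then a mere constant $\gamma_{\r,\r'}\in\mathbb{C}$, and by Corollary~\ref{cor-ortho-1} the prefactor $P=x_{r_1'}^{1-k}x_{r_2'}\cdots x_{r_k'}$ is sent by $\L_\r$ to $x_{r_1}^{1-k}x_{r_2}\cdots x_{r_k}$. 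The identity $\sum_\r\L_\r(P\epsilon(D'))=0$ thus becomes $\sum_\r\gamma_{\r,\r'}\,x_{r_1}^{1-k}x_{r_2}\cdots x_{r_k}=0$, and linear independence of these monomials (one per $\r$) kills every $\gamma_{\r,\r'}$ at once. In particular $\gamma_{\r',\r'}=0$, contradicting the choice of $D'$ via Corollary~\ref{cor-zero-operator}. The monomial tag does cleanly what your residue bookkeeping was trying to do.
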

\begin{proof}
Assume
\begin{equation}\label{e-sum-operator}
\sum_{\r} \L_{\r}=0,
\end{equation}
where $\r$ is over all integer sequences compatible with $\S$
and $\L_{\r}\in \spann(B_{\S,\r})$. We need to show that every $\L_{\r}$ is in fact the zero operator.

Assume to the contrary that there exists a $\r'=(r_1',r_2',\dots, r_k')$ such that $\L_{\r'}$ is nonzero. Note that $k=n-s\ge 2$.
By Corollary \ref{cor-zero-operator}, there exists a $D'\in \Daugn_{\S,\r'}$ such that $\L_{\r'}\epsilon(D')=\gamma_{\r',\r'}\in \mathbb{C}\setminus \{0\}$.
By Corollary \ref{cor-eq} we have $\L_{\r} \epsilon(D')=\gamma_{\r,\r'}\in \mathbb{C}.$

Now applying both sides of \eqref{e-sum-operator} to $x_{r_1'}^{1-k}x_{r_2'}\cdots x_{r_k'}\epsilon(D')$ and using Corollary~\ref{cor-ortho-1}, we obtain
\[
\sum_{\r} \gamma_{\r,\r'} x_{r_1}^{1-k}x_{r_2}\cdots x_{r_k}=0,
\]
where $\r=(r_1,\dots,r_k)$ is also over all integer sequences compatible with $\S$.
By the linear independency of the monomials, we have $\gamma_{\r,\r'}=0$ for all $\r$. In particular $\gamma_{\r',\r'}=0$, a contradiction.
\end{proof}

\begin{lem}\label{lem-struct-s}
(1) The set $B^{n-1}=\{L(D): D\in \Taug_{n-1}\}$
is a basis of the constant term algebra $\CTA^{n-1}$;

(2) For $s=0,1,\dots,n-2$,
\begin{equation}\label{e-direct-s}
\CTA^s= \bigoplus_{\S \in \Par(N,n-s)} \spann \big(B_{\S}\big)=\spann (B^s),
\end{equation}
where $B^s =\{L(D): D\in \Daugn_{s}\}.$
\end{lem}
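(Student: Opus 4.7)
The plan is to combine the spanning result of Proposition~\ref{prop-span} with the orthogonality developed in Corollary~\ref{cor-ortho-1} and Corollary~\ref{cor-zero-operator}. For part~(1), I would first observe that $\Taug_{n-1}$ and $\Daugn_{\{N\},(1)}$ coincide: since an augmented increasing tree on $N$ has every edge $i\to j$ with $i>j$, its root (the unique sink) must be the smallest vertex $1$, and a tree rooted at $1$ is increasing if and only if it is nearly increasing (edges not ending at the root are increasing by hypothesis, and any edge $i\to 1$ is automatically increasing). Proposition~\ref{prop-span} then gives that $B^{n-1}$ spans $\CTA^{n-1}$, while Corollary~\ref{cor-zero-operator} applied with $\S=\{N\}$ and $\r=(1)$ guarantees linear independence, so $B^{n-1}$ is a basis of $\CTA^{n-1}$.

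For part~(2), the equality $\CTA^{s}=\spann(B^{s})=\sum_{\S}\spann(B_{\S})$ is immediate from Proposition~\ref{prop-span} together with $B^{s}=\bigcup_{\S}B_{\S}$; the remaining content is showing the sum over $\S \in \Par(N,n-s)$ is direct. I would assume $\sum_{\S} \L_{\S} = 0$ with $\L_{\S}\in \spann(B_{\S})$, and suppose for contradiction that some $\L_{\S'}\neq 0$. By Lemma~\ref{lem-struct-S}, decompose $\L_{\S'}=\sum_{\r\compat\S'}\L_{\S',\r}$ with some $\L_{\S',\r'}\neq 0$; by Corollary~\ref{cor-zero-operator} there is a witness $D'\in \Daugn_{\S',\r'}$ with $\L_{\S',\r'}\,\epsilon(D')$ a nonzero scalar.

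The decisive step is to evaluate the relation $\sum_{\S}\L_{\S}=0$ on the test element $P\,\epsilon(D')$, where $P=x_{r_1'}^{1-k}x_{r_2'}\cdots x_{r_k'}$ and $k=n-s$. Since every $\S\neq\S'$ has the same size $n-s$ as $\S'$, it cannot be a refinement of $\S'$, so Corollary~\ref{cor-ortho-1} forces $L(D)\,P\,\epsilon(D')=0$ for every $D\in\Daugn_{\S}$ and hence $\L_{\S}(P\epsilon(D'))=0$. For $\S=\S'$ and each $\r\compat\S'$, Corollary~\ref{cor-ortho-1} further shows $L(D)\,P\,\epsilon(D')=\pm x_{r_1}^{1-k}x_{r_2}\cdots x_{r_k}$ for $D\in\Daugn_{\S',\r}$ (the residual $\epsilon$-factor is trivial because $|E(D)|=|E(D')|=s$). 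Thus $\L_{\S'}(P\epsilon(D'))=\sum_{\r}\gamma_{\r}\,x_{r_1}^{1-k}x_{r_2}\cdots x_{r_k}$ for scalars $\gamma_{\r}$, and these monomials are pairwise distinct across $\r$ (the first variable carries exponent $1-k$ while the others carry exponent $1$, and each block $S_j$ contributes a different coordinate $r_j$). Lemma~\ref{lem-orthogonality-2} identifies $\gamma_{\r'}$ with $\L_{\S',\r'}\,\epsilon(D')\neq 0$, so linear independence of monomials forces $\gamma_{\r'}=0$, the desired contradiction.

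I expect the main obstacle to be coordinating the two layers of separation (first by partition $\S$, then by root sequence $\r$) inside a single test element, rather than any new algebraic manipulation. Corollary~\ref{cor-ortho-1} handles the partition layer essentially for free because of the matching edge count, and the polynomial prefactor $P$, borrowed from the proof of Lemma~\ref{lem-struct-S}, handles the root layer. The unknown signs in $L(D)\,P\,\epsilon(D')=\pm P'$ are harmless, since only the $\r=\r'$ coefficient needs to be pinned down explicitly, and Lemma~\ref{lem-orthogonality-2} supplies its value directly.
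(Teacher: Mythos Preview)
Your argument is correct and tracks the paper's strategy closely: Proposition~\ref{prop-span} for spanning, then evaluation on $\epsilon$-type elements for independence/directness. For part~(1) your identification $\Taug_{n-1}=\Daugn_{\{N\},(1)}$ followed by Corollary~\ref{cor-zero-operator} is equivalent to the paper's direct use of Lemma~\ref{lem-orthogonality-2}.

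For part~(2) there is one genuine difference worth noting. The paper evaluates $\sum_{\S}\L_{\S}=0$ on the bare element $\epsilon(D')$ and asserts, citing Corollary~\ref{cor-zero-operator} and Lemma~\ref{lem-struct-S}, that $\L_{\S'}\epsilon(D')\neq 0$ for some $D'$. You instead evaluate on $P\,\epsilon(D')$ with $P=x_{r_1'}^{1-k}x_{r_2'}\cdots x_{r_k'}$, importing the monomial prefactor from the proof of Lemma~\ref{lem-struct-S} so as to separate the root layer explicitly inside the same computation. Your choice is the right one: the elements $\epsilon(D')$ alone do \emph{not} detect every nonzero $\L_{\S'}\in\spann(B_{\S'})$ --- for example with $n=3$, $s=1$, $\S'=\{\{1,2\},\{3\}\}$ the operator $[2,1]+[1,2]$ annihilates both $\epsilon(D')$ yet is nonzero --- so carrying $P$ through is exactly what makes the contradiction fire. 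In effect you have unpacked the paper's appeal to Lemma~\ref{lem-struct-S} and made it do real work at the point where it is needed. One tiny imprecision: when you write ``$L(D)\,P\,\epsilon(D')=\pm x_{r_1}^{1-k}\cdots x_{r_k}$'' you should allow the value $0$ as well (Corollary~\ref{cor-ortho-1} gives $0$ or $\pm P'$), but this does not affect the rest of the argument since only the coefficient $\gamma_{\r'}$ needs to be nonzero.
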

\begin{proof}
By Proposition~\ref{prop-span}, the algebra $\CTA^{n-1}$ is spanned by $B^{n-1}$.
Thus, to prove (1), it suffices to prove that the elements of $B^{n-1}$ are linearly independent. Assume
\begin{equation}\label{e-mainproof-1}
\sum_{T \in \Taug_{n-1}} c_T L(T) =0,
\end{equation}
where 
$c_T \in \mathbb{C}$ for all $T$. Applying both sides of \eqref{e-mainproof-1} to
$\epsilon(T')$ for some $T'\in \Taug_{n-1}$ and using Lemma \ref{lem-orthogonality-2},
we get $c_{T'}=0$. Let $T'$ range over all trees of $\Taug_{n-1}$. This gives that all the coefficients $c_T$ in \eqref{e-mainproof-1} vanish. Then, the $L(T)$'s are linearly independent.


To prove (2), suppose
\begin{equation}\label{e-mainproof-2}
\sum_{\S \in \Par(N,n-s)} \L_{\S}=0,
\end{equation}
where $\L_{\S} \in \spann \big(B_{\S}\big)$. We need to show
that $\L_{\S}$ is the zero operator for each $\S$.

Assume to the contrary that there exists an $\S'$ such that
$\L_{\S'}\neq 0$. Then by Corollary~\ref{cor-zero-operator} and Lemma \ref{lem-struct-S}, there exists a $D'\in \Daugn_{\S}$ such that
$\L_{\S'} \epsilon(D')=\gamma \in \mathbb{C}\setminus \{0\}$.
By Corollary \ref{cor-ortho-1}, $\L_{\S} \epsilon(D')=0$ for all
$\S\neq \S'$. Therefore,
applying both sides of \eqref{e-mainproof-2} to $\epsilon(D')$ gives
$\gamma=0$, a contradiction.
\end{proof}

\begin{lem}\label{lem-struct-CTA}
The constant term algebra $\CTA$ has the following direct sum decomposition:
\begin{equation}\label{e-direct2}
\CTA = \bigoplus_{s=0}^{n-1}\CTA^{s} = \bigoplus_{s=0}^{n-1} \spann(B^s).
\end{equation}
\end{lem}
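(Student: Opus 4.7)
The plan is to reduce the statement to results already in hand, with one new combinatorial argument to handle directness.

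First observe that the decomposition $\CTA=\sum_{s=0}^{n-1}\CTA^{s}$ is essentially built into the definitions: every element of $\CTA$ is a $\mathbb{C}$-linear combination of monomial operators, each of which carries a well-defined degree, and by Corollary~\ref{cor-basic}(1) every monomial of degree $\ge n$ vanishes. Lemma~\ref{lem-struct-s} then identifies each summand as $\spann(B^{s})$. So the real content of the lemma is that this sum is \emph{direct}.

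For directness, I would argue by contradiction: suppose $\sum_{s=0}^{n-1}\L_{s}=0$ with $\L_{s}\in\CTA^{s}$ and some $\L_{s}\ne 0$, and let $s^{*}$ be the \emph{smallest} index with $\L_{s^{*}}\ne 0$. Using Lemma~\ref{lem-struct-s}, expand $\L_{s^{*}}=\sum_{D} c_{D} L(D)$ over the basis $B^{s^{*}}$ and pick a particular $D^{**}$ with $c_{D^{**}}\ne 0$; write $\S^{*}$ and $\r^{*}=(r_{1}^{*},\dots,r_{k}^{*})$ with $k=n-s^{*}$ for its partition and root sequence. Then I would test the equation $\sum_{s}\L_{s}=0$ against the separating function $F^{*}:=x_{r_{1}^{*}}^{1-k}x_{r_{2}^{*}}\cdots x_{r_{k}^{*}}\,\epsilon(D^{**})$ borrowed from the proof of Lemma~\ref{lem-struct-S}. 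For each monomial $L(D)$ of degree $s>s^{*}$, Corollary~\ref{cor-ortho-1} forces $L(D)F^{*}=0$, because the vertex partition of $D$ has only $n-s<k$ blocks and therefore cannot refine the $k$-block partition of $D^{**}$. Combined with $\L_{s}=0$ for $s<s^{*}$, the tested equation collapses to $\L_{s^{*}}(F^{*})=0$.

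The final step, and the main obstacle, is to show $\L_{s^{*}}(F^{*})\ne 0$. Here I would replay the $\S$-then-$\r$ decomposition used in the proofs of Lemma~\ref{lem-struct-S} and Lemma~\ref{lem-struct-s}(2): by Corollary~\ref{cor-ortho-1}, only those $L(D)$ with partition exactly $\S^{*}$ can contribute, and each such surviving $L(D)$ contributes $\pm c_{D}\,x_{r_{1}}^{1-k}x_{r_{2}}\cdots x_{r_{k}}$ where $\r$ is the root sequence of $D$; distinct $\r$'s give linearly independent monomials in $\x$, and Lemma~\ref{lem-orthogonality-2} isolates the $D=D^{**}$ term within $\Daugn_{\S^{*},\r^{*}}$. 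Extracting the coefficient of $x_{r_{1}^{*}}^{1-k}x_{r_{2}^{*}}\cdots x_{r_{k}^{*}}$ in $\L_{s^{*}}(F^{*})$ then produces $\pm c_{D^{**}}\ne 0$, the desired contradiction. No essentially new machinery is needed beyond careful bookkeeping of signs and substitutions in Corollary~\ref{cor-ortho-1}, Lemma~\ref{lem-struct-S} and Lemma~\ref{lem-orthogonality-2}; the genuinely new conceptual ingredient beyond Lemma~\ref{lem-struct-s} is simply choosing the \emph{smallest} nonzero degree $s^{*}$ so that the degree inequality in Corollary~\ref{cor-ortho-1} annihilates all higher-degree pieces $\L_{s}$ on the test function $F^{*}$.
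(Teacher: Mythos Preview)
Your proposal is correct and follows the same strategy as the paper: pick the smallest index $s^{*}$ with $\L_{s^{*}}\ne 0$, test the relation $\sum_{s}\L_{s}=0$ on an $\epsilon$-type function attached to some $D'\in\Daugn_{s^{*}}$, and use the refinement/edge-count inequality (Corollary~\ref{cor-ortho-1}, equivalently Corollary~\ref{cor-ge}) to annihilate every $\L_{s}$ with $s>s^{*}$. The only difference is presentational: the paper tests on $\epsilon(D')$ and packages the nonvanishing of $\L_{s^{*}}\epsilon(D')$ into a single citation of Corollary~\ref{cor-zero-operator}, Lemma~\ref{lem-struct-S} and Lemma~\ref{lem-struct-s}, whereas you unfold that citation by carrying the monomial weight $x_{r_{1}^{*}}^{1-k}x_{r_{2}^{*}}\cdots x_{r_{k}^{*}}$ from the proof of Lemma~\ref{lem-struct-S} through explicitly to isolate $c_{D^{**}}$.
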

\begin{proof}
Suppose
\begin{equation}\label{e-last}
\sum_{s=0}^{n-1} \L_s=0,
\end{equation}
where $\L_s\in \CTA^s$.
We need to show that $\L_s$ is the zero operator for each $s$.

Assume the contrary. Let $s'$ be the smallest integer such that $\L_{s'}\neq 0$.
By Corollary~\ref{cor-zero-operator}, Lemma~\ref{lem-struct-S} and Lemma \ref{lem-struct-s}, there exists a $D'\in \Daugn_{s'}$ when $s'\leq n-2$ or $D'\in \Taug_{s'}$ when $s'=n-1$,
 such that
$\L_{s'} \epsilon(D')=\gamma' \in \mathbb{C}\setminus \{0\}$.
By Corollary \ref{cor-ge}, we have $\L_{s} \epsilon(D')=0$ for all $s>s'$.

Now by applying both sides of \eqref{e-last} to $\epsilon(D')$, we have
\[
0=\sum_{s=0}^{n-1} \L_s \epsilon(D')=\gamma'.
\]
This is a contradiction.
\end{proof}

Finally, we obtain our main theorem.
\begin{thm}\label{thm-struct}
The union $\cup_{s=0}^{n-1}B^s$ forms a basis of $\CTA$.
\end{thm}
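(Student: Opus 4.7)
The plan is to assemble the theorem directly from the decomposition results already established in this subsection. The spanning side is free (Proposition~\ref{prop-span} already tells us that $B^s$ spans $\CTA^s$ for every $s$), so the only substantive content is linear independence across all degrees simultaneously, and this has essentially been packaged into the preceding lemmas.

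First I would observe that by Lemma~\ref{lem-struct-CTA} we have the direct sum decomposition
\[
\CTA = \bigoplus_{s=0}^{n-1} \spann(B^s),
\]
so it suffices to prove that, for each fixed $s$, the set $B^s$ is itself linearly independent (it then automatically forms a basis of $\spann(B^s)=\CTA^s$, and the union of bases of the summands in a direct sum is a basis of the whole space).

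For $s=n-1$ this is exactly Lemma~\ref{lem-struct-s}(1). For $s=0$ it is trivial, since $B^0=\{\id\}$. For $1\le s\le n-2$, I would combine the nested decompositions
\[
\CTA^s = \bigoplus_{\S\in \Par(N,n-s)} \spann(B_{\S}), \qquad
\spann(B_{\S}) = \bigoplus_{\r\,\mathrm{compat}\,\S} \spann(B_{\S,\r})
\]
from Lemma~\ref{lem-struct-s}(2) and Lemma~\ref{lem-struct-S}, together with Corollary~\ref{cor-zero-operator}, which says that $B_{\S,\r}$ is already a basis of $\spann(B_{\S,\r})$. Since $B^s$ is the disjoint union of the $B_{\S,\r}$ over all compatible pairs $(\S,\r)$ with $\S\in \Par(N,n-s)$, taking the union of the bases across the two direct-sum layers yields that $B^s$ is a basis of $\CTA^s$.

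Finally I would conclude that $\cup_{s=0}^{n-1} B^s$ is a basis of $\CTA$ by the general principle that the disjoint union of bases of the summands of a direct sum is a basis of the total space. I do not anticipate a genuine obstacle here; the real work was done earlier, in establishing the orthogonality relation $L(D_i)\epsilon(D_j)=\delta_{i,j}$ of Lemma~\ref{lem-orthogonality-2} (which drives Corollary~\ref{cor-zero-operator}) and in separating different root sequences and different partitions via the action on $\epsilon(D')$ and on test monomials $x_{r_1'}^{1-k}x_{r_2'}\cdots x_{r_k'}\epsilon(D')$. The theorem is then just the bookkeeping step that collects these facts into a single statement.
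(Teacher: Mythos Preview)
Your proposal is correct and takes essentially the same approach as the paper: the paper's proof is the single sentence ``This is a direct consequence of Lemmas~\ref{lem-struct-CTA}, \ref{lem-struct-s}, \ref{lem-struct-S} and Corollary~\ref{cor-zero-operator},'' and what you have written is precisely an unpacking of how those four results fit together via the nested direct-sum decompositions.
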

\begin{proof}
This is a direct consequence of Lemmas \ref{lem-struct-CTA}, \ref{lem-struct-s}, \ref{lem-struct-S} and Corollary~\ref{cor-zero-operator}.
\end{proof}

\section{Concluding remark}
We have established a structural result of the constant term algebra of type $A$. This algebra shall be useful in computing
constant terms of type $A$ rational functions.
As an application, the algorithm based on this algebra will be efficient in the computation of the Ehrhart polynomial $H_n(t)$ in \eqref{e-intro-2}.

In the next sequel, at least for the rational functions $F$ arising from the computation of $H_n(t)$, we will express $\CT\limits_{\x} F$ as a linear combination of
$LF$ (where $L \in \CTA^{n-1}$)
by iterated Laurent series.
Then we will simplify the computation by our structural result.
In this way, we have obtained simple formulas of $H_n(t)$ for $n\le 6$ so far.
It is hopeful that we can solve the open problem of computing $H_{10}(t)$ by this idea.

It is also natural to consider other types of rational functions. For example,
the type $D$ rational functions: the denominator factors are of the forms $1-x_i/x_j$ and $1-x_ix_j$. There should exist a similar algebra.

Another further direction is to consider the $q$-analogues of type $A$ rational functions. We anticipate a similar theory for the $q$-Dyson type constant terms.

\textbf{Acknowledgments:}
This work was supported by the National Natural Science Foundation of China (No. 12071311, 12171487).


\begin{thebibliography}{10}


\bibitem{AFLT11}
V. A. Alba, V. A. Fateev, A. V. Litvinov and G. M. Tarnopolskiy, \emph{On combinatorial expansion of the conformal blocks arising from AGT conjecture}, Lett. Math. Phys. 98 (2011), 33--64.

\bibitem{ARW}
S. P. Albion, E. M. Rains and S. O. Warnaar,
\emph{AFLT-type Selberg integrals},
Comm. Math. Phys. 388 (2021), 735--791.

\bibitem{Per}
P. Alexandersson, S. Hopkins and G. Zaimi, \emph{Restricted Birkhoff polytopes and Ehrhart period collapse}, arXiv:2206.02276.

\bibitem{andrews1975}
G. E. Andrews, \emph{Problems and prospects for basic
hypergeometric functions}, in Theory and Application of
Special Functions, Academic Press, New York, 1975,
pp. 191--224.

\bibitem{Askey}
R. Askey, \emph{A $q$-beta integral associated with
$BC_1$}, SIAM J. Math. Anal. 13 (1982), 1008--1010.

\bibitem{BF}
T. H. Baker and P. J. Forrester, \emph{Generalizations of the $q$-Morris constant term identity}, J. Combin. Theory, Ser. A 81 (1998), 69--87.

\bibitem{Welleda}
W. Baldoni-Silva and M. Vergne, \emph{Residues formulae for volumes and Ehrhart polynomials of convex polytopes}, arXiv:math/0103097v1.

\bibitem{BeckPixton}
M. Beck  and  D. Pixton, \emph{The Ehrhart polynomial of the Birkhoff polytope}, Discrete Comput. Geom. 30 (2003), 623--637.

\bibitem{Louis}
L. J. Billera and A. Sarangarajan, \emph{The combinatorics of permutation polytopes}, Formal
power series and algebraic combinatorics (New Brunswick, NJ, 1994), Amer. Math. Soc., Providence, RI, 1996, pp. 1--23.

\bibitem{cai}
T. W. Cai, \emph{Macdonald symmetric functions of rectangular shapes}, J. Combin. Theory, Ser. A 128 (2014), 162--179.

\bibitem{Clara}
C. S. Chan and D. P. Robbins, \emph{On the volume of the polytope of doubly stochastic
matrices}, Experiment. Math. 8 (1999), no. 3, 291--300.

\bibitem{cherednik}
 I. Cherednik, \emph{Double affine Hecke algebras and Macdonald's conjectures}, Ann. Math. 141 (1995), 191--216.

\bibitem{Loera}
J. A. De Loera, F. Liu and R. Yoshida, \emph{A generating function for all semi-magic squares and the volume of the Birkhoff polytope}, J. Algebr. Comb. 30 (2009), 113--139 .

\bibitem{Diaconis}
P. Diaconis and  A. Gangolli, \emph{Rectangular arrays with fixed margins}, IMA Series on Volumes
in Mathematics and its Applications, \# 72 Springer-Verlag (1995), 15--41.

\bibitem{dyson1962}
F. J. Dyson, \emph{Statistical theory of the energy levels of
complex systems I}, J. Math. Phys. 3 (1962), 140--156.

\bibitem{FW}
P. J. Forrester and S. O. Warnaar, \emph{The importance of the Selberg integral}, Bull. Amer. Math. Soc. (N.S.) 45 (2008), 489--534.

\bibitem{gess-xin2006}
I. M. Gessel and G. Xin, \emph{A short proof of the
Zeilberger--Bressoud $q$-Dyson theorem}, Proc. Amer. Math. Soc.
134 (2006), 2179--2187.

\bibitem{good}
I. J. Good, \emph{Short proof of a conjecture by Dyson}, J. Math.
Phys. 11 (1970), 1884.

\bibitem{gunson}
J. Gunson, unpublished.

\bibitem{Habsieger1988}
L. Habsieger, \emph{Une q-int\'{e}grale de Selberg-Askey}, SIAM J.
Math. Anal. 19 (1988), 1475--1489.

\bibitem{Kadell1988}
K. W. J. Kadell, \emph{A proof of Askey's conjectured $q$-analogue of Selberg's integral and a conjecture of Morris}, SIAM J. Math. Anal. 19 (1988), 969--986.

\bibitem{Kadell93}
K. W. J. Kadell, \emph{An integral for the product of two Selberg-Jack symmetric functions},
Compos. Math. 87 (1993), 5--43.

\bibitem{Kadell97}
K. W. J. Kadell, \emph{The Selberg--Jack symmetric functions}, Adv. Math. 130 (1997), 33--102.


\bibitem{KN}
G. K\'{a}rolyi and Z. L. Nagy, \emph{A simple proof of the Zeilberger--Bressoud q-Dyson theorem}, Proc. Amer. Math. Soc.
142 (2014), 3007--3011.

\bibitem{macdonald82}
I. G. Macdonald, \emph{Some conjectures for root systems},
SIAM J. Math. Anal. 13 (1982), 988--1007.

\bibitem{MacSMC}
I. G. Macdonald, \emph{A new class of symmetric functions}, Actes du 20e S\'eminaire Lotharingien, vol. 372/S-20, Publications I.R.M.A., Strasbourg, 1988, pp. 131--171.

\bibitem{Mac95}
I. G. Macdonald, \emph{Symmetric Functions and Hall Polynomials},
2nd edition, Clarendon Press, Oxford, 1995.

\bibitem{Morris1982}
W. G. Morris, \emph{Constant Term Identities for Finite and Affine Root System: Conjectures and Theorems}, Ph.D. thesis, Univ. Wisconsin--Madison, 1982.

\bibitem{John}
J. Mount, \emph{Fast unimodular counting}, Combin. Probab. Comput. 9 (2000), no. 3, 277--285.

\bibitem{Selberg}
A. Selberg, \emph{Bemerkninger om et multipelt integral}, Norsk Mat. Tidsskr. 26 (1944), 71--78.

\bibitem{War09}
S. O. Warnaar, \emph{A Selberg integral for the Lie algebra $A_n$}, Acta Math. 203 (2009), 269--304.

\bibitem{wilson}
K. G. Wilson, \emph{Proof of a conjecture by Dyson}, J. Math.
Phys. 3 (1962), 1040--1043.

\bibitem{zeil}
D. Zeilberger, \emph{A combinatorial proof of Dyson's conjecture},
 Discrete Math. 41 (1982), 317--321.

\bibitem{zeil-bres1985}
D. Zeilberger and D. M. Bressoud, \emph{A proof of Andrews'
$q$-Dyson conjecture}, Discrete Math. 54 (1985),
201--224.

\end{thebibliography}
\end{document}